\newcommand{\new}[1]{#1}
\newtheorem{assumption}[theorem]{Assumption}
\DeclareMathOperator\proj{proj}
\newcommand{\oo}[1]{\tfrac{1}{#1}}
\newcommand{\abs}[1]{\left| #1 \right|} 
\newcommand{\set}[1]{\{#1\}} 
\newcommand{\sets}[2]{\set{#1\,:\,#2}} 
\newcommand{\bset}[1]{\big\{#1\big\}} 
\newcommand{\bsets}[2]{\bset{#1\,:\,#2}} 
\newcommand{\Bset}[1]{\Big\{#1\Big\}} 
\newcommand{\Bsets}[2]{\Bset{#1\,:\,#2}} 
\newcommand{\ind}[1]{ {\mathbf 1}_{{#1}}} 
\newcommand{\inds}[1]{ {\mathbf 1}_{\set{#1}}} 
\newcommand{\seq}[1]{\set{#1_n}_{n\in\N}} 
\renewcommand{\fam}[2]{\{ #1 \}_{#2}}
\newcommand{\prf}[1]{ \{ #1 \}_{t\in [0,T]}}
\newcommand{\prfi}[1]{ \{ #1 \}_{t\in [0,\infty)}}
\newcommand{\dd}{d}
\newcommand{\tRN}[2]{\tfrac{\dd #1}{\dd #2}}
\providecommand{\R}{} \renewcommand{\R}{{\mathbb R}}
\newcommand{\N}{{\mathbb N}}
\newcommand{\PP}{{\mathbb P}}
\newcommand{\QQ}{{\mathbb Q}}
\newcommand{\hQ}{\hat{\QQ}}
\newcommand{\EE}{{\mathbb E}}
\newcommand{\FF}{{\mathcal F}}
\newcommand{\eeq}[2]{ \bE^{#1}\left[ #2 \right] }
\newcommand{\Beeq}[2]{ \bE^{#1}\Big[ #2 \Big] }
\newcommand{\eecq}[3]{ \bE^{#1}\left[ #2 | #3 \right] }
\newcommand{\MM}{{\mathcal M}}
\newcommand{\FFF}{{\mathbb F}}
\renewcommand{\AA}{{\mathcal A}}
\newcommand{\eps}{\varepsilon}
\newcommand{\ld}{\lambda}
\newcommand{\gm}{\gamma}
\newcommand{\vp}{\varphi}
\newcommand{\esl}{{\mathcal L}} 
\newcommand{\slzerone}{\esl^{0-1}}
\newcommand{\el}{{\mathbb L}} 
\newcommand{\lone}{\el^1}
\newcommand{\define}[1]{{\textbf{#1}}}
\newcommand{\efor}{\text{ for }}
\newcommand{\eand}{\text{ and }}
\newcommand{\ewhere}{\text{ where }}
\newcommand\tx{{\tilde{x}}}
\newcommand\sA{{\mathcal A}}
\newcommand\sB{{\mathcal B}}
\newcommand\tB{{\tilde{B}}}
\newcommand\bE{{\mathbb E}}
\newcommand\tE{{\tilde{E}}}
\newcommand\sF{{\mathcal F}}
\newcommand\bF{{\mathbb F}}
\newcommand\sG{{\mathcal G}}
\newcommand\sK{{\mathcal K}}
\newcommand\sM{{\mathcal M}}
\newcommand\sN{{\mathcal N}}
\newcommand\sP{{\mathcal P}}
\newcommand\sQ{{\mathcal Q}}
\newcommand\sR{{\mathcal R}}
\newcommand\sS{{\mathcal S}}
\newcommand\sT{{\mathcal T}}
\newcommand\sU{{\mathcal U}}
\newcommand\tU{{\tilde{U}}}
\newcommand\tX{{\tilde{X}}}
\newcommand\hZ{{\hat{Z}}}
\newcommand\AsE{\mathrm{AE}}
\newcommand{\tsP}{\tilde{\sP}}
\newcommand{\zpq}{Z^{\PP,\QQ}}
\newcommand{\hPP}{\hat{\PP}}
\newcommand{\ttau}{\tilde{\tau}}
\newcommand{\lht}{ L\times  (H\circ\theta_{\ttau})}
\newcommand{\tFFF}{\tilde{\FFF}}
\newcommand{\Xd}{X_{\cdot}}
\newcommand{\Zd}{Z_{\cdot}}
\newcommand{\prob}{\mathfrak{P}}
\newcommand{\tprob}{\tilde{\mathfrak{P}}}
\newcommand{\fR}{{\mathfrak R}}
\newcommand{\cct}{*_{\tau}}
\newcommand{\cctt}{*_{\tilde{\tau}}}
\newcommand{\ott}{\otimes_{\tau}}
\newcommand{\nll}{\not\ll}
\newcommand{\hQQ}{\hat{\QQ}}
\newcommand{\hmu}{\hat{\mu}}
\newcommand{\hne}{\hat{\nu}^{\eps}}
\newcommand{\de}{D_{E}}
\newcommand{\dz}{D_{[0,\infty)}}
\newcommand{\dep}{D_{\tE}}
\newcommand{\bde}{\sB(\de)}
\newcommand{\spx}{\sP^x}
\newcommand{\spxe}{(\spx)_{x\in E}}
\newcommand{\smx}{\sM^x}
\newcommand{\smxe}{(\smx)_{x\in E}}
\newcommand{\selp}{\sS(\sP)}
\newcommand{\gp}{\Gamma_{\sP}}
\newcommand{\tgp}{\Gamma_{\tsP}}
\newcommand{\gpe}{\Gamma_{\sP_{\eps}}}
\renewcommand{\gm}{\Gamma_{\sM}}
\newcommand{\gmn}{\Gamma_{\sM_n}}
\newcommand{\qcn}{\QQ\cct\nu}
\newcommand{\hnu}{\hat{\nu}}
\title{Dynamic Programming for controlled Markov families: abstractly and
over Martingale Measures\thanks{The author would
like to thank Gerard Brunick, Kasper Larsen and Mihai S\^\i rbu for
numerous and helpful discussions on the subject of dynamic
programming.  Careful reading by the anonymous referees and the associate
editor of the original manuscript, as well as their numerous
suggestions for improvement, is gratefully acknowledged.
This material is based upon work supported by the National
Science Foundation under Grants No.  DMS-0706947 (2010 - 2015) and Grant
No.~DMS-1107465 (2012 - 2017).  Any opinions, findings and conclusions or
recommendations expressed in this material are those of the author(s) and
do not necessarily reflect the views of the National Science Foundation
  (NSF)}} 
\author{Gordan {\v Z}itkovi{\' c}\footnotemark[2]}
\begin{document}
\maketitle
\renewcommand{\thefootnote}{\fnsymbol{footnote}}
\footnotetext[2]{Department of Mathematics, The University of Texas at
Austin} 
\renewcommand{\thefootnote}{\arabic{footnote}}

\begin{abstract} 
  We describe an abstract control-theoretic framework in which the validity
  of the dynamic programming principle can be established in continuous
  time by a verification of a small number of structural properties. As an
  application we treat several cases of interest, most notably the
  lower-hedging and utility-maximization problems of financial mathematics
  both of  which are naturally posed over ``sets of martingale measures''.
\end{abstract} 
\begin{keywords} 
  dynamic programming, 
  financial mathematics, 
  lower hedging,
  Markov processes, 
  optimal stochastic control, 
  utility maximization
\end{keywords}

\begin{AMS}
93E20, 60G44, 60J25, 91G80, 
\end{AMS} 

\pagestyle{myheadings}
\thispagestyle{plain}
\markboth{G.~{\v Z}ITKOVI{\' C}}{DYNAMIC PROGRAMMING FOR CONTROLLED MARKOV FAMILIES}

\section{Introduction} The goal of this article is to prove the dynamic
programming principle (DPP in the sequel) for a class of stochastic control
problems in a continuous-time Markov-like environment, including two
fundamental problems in financial mathematics defined over or parametrized
by a set of ``martingale'' measures.  To this end, we introduce a flexible
abstract framework in which, we hope, other problems can be treated as
well. 

The history of the DPP is fascinating; it starts with the work of Wald
\cite{Wal50} and Bellman \cite{Bel57a}, although the idea of the reduction
of a complicated sequential problem to a family of simpler ones is
undoubtedly much older. A rigorous discrete-time theory focusing on
intricate measurability issues started with the work of Blackwell and
others (see, e.g., \cite{Bla62, Bla65, Str66, Bla67, BlaFreOrk74,
BerShr79}; we refer the reader to \cite{BerShr96} for further
bibliographical information and a detailed discussion).  It was already
known to these authors that -  unlike in the setting of deterministic
optimal control, where the validity of the DPP is easier to establish - the
stochastic version of the DPP comes with a number of additional subtleties
and requires a much more delicate treatment.  For this reason, two main
schools of thought dominate the control-theoretic literature.  In one of
them, great importance is given to a rigorous derivation of an appropriate
version of the DPP.  The other, however, treats the very need for a proof
of the DPP as a mathematical pedantry and sees it as an evidently correct
general principle. 

In order to provide a suitable general theoretical foundation \emph{in
continuous time}, and, in the endgame, a practical convergence of the two
approaches, a flurry of activity over the last decades expanded
tremendously our understanding of continuous-time DPP in a variety of
settings (see, e.g., \cite{Elk81}, \cite{Bor89}, \cite{FleSon93},
\cite{SonTou02}, \cite{SonTou02a}, \cite{BouVu10}, \cite{BouTou11},
\cite{BouNut12}, etc.). \new{ Independently of the present paper, and with
a different point of view, the authors of \cite{ElKTan13} and
\cite{ElKTan13a} also provide an abstract approach to the dynamic
programming and give a thorough treatment of descriptive set-theoretic
machinery which underlies it.  Moreover, a similar
descriptive-set-theoretic idea have recently been used successfully in the
context of model uncertainty and nonlinear expectations (see \cite{Nut12},
\cite{NutHan13} and \cite{BouNut13}), with the setting and technique,
especially in \cite{NutHan13},  similar to that of parts of Sections 2.1
and 3.2 of the present paper.} 

A different point of view, called the ``Stochastic Perron Method'',
initiated by Bayraktar and Sirbu \new{(see \cite{BaySir12},
\cite{BaySir12a} and \cite{BaySir12b})} circumvents the use of the formal
DPP altogether and derives verification results for the corresponding HJB
equations directly under natural condition on the equation itself.

\subsection{Weakly-constrained problems and the convenience of the weak
formulation} While quite general and extremely useful in their own domains,
none of the existing results seem to apply to the general Markovian
versions of the problems of lower hedging or utility maximization, often
seen as fundamental in the field of financial mathematics. These problems
are naturally defined over sets of equivalent local-martingale measures
and, depending on the particular setting, do not admit a naive translation
into the classical control milieu.  Indeed, they usually come in the
``weakly-constrained'' form: when framed in the classical
stochastic-control language, the set of admissible controls is
unconstrained locally, but still required to consist of processes which,
when acted upon by a stochastic-exponential type operator, yield
uniformly-integrable (as opposed to merely local) martingales.  This
difficulty vanishes when one chooses to view these problems in their (we
dare say, even more natural) weak formulation, and works with sets of
probability measures rather than control processes right from the start.
The weak approach is certainly not new in stochastic control theory; the
first uses of a ``controlled martingale problem'' go back at least to
\cite{Bis76}.  Many other authors (see \cite{YonZho99} and \cite{FleSon06}
and the references therein) use similar concepts under different names,
such as the "weak formulation".

An added technical benefit of the weak approach is the complete avoidance
of subtle measure-theoretic difficulties (mostly dealing with the
stochastic integration) inherent to the DPP-based treatment of
strongly formulated problems. In fact, it could be argued that the entire
``philosophy'' of our approach is based on this fact: the
descriptive-set-theoretic and topological framework of classical
discrete-time optimal control and the analytical and process-theoretic
framework of the contemporary stochastics do not seem to play well
together. For example, the filtration completion makes a tractable
stochastic-integration theory possible, while simultaneously  destroying
the countable-generation property. 

The strong formulation - where the controls and the noise inhabit a
fixed filtered 
probability space - may seem to be more amenable to concatenation (the
basic construction in any DPP treatment). The message we are sending here
is that, from the analytical point of view, measures are easier to work
with, and just as easy to concatenate. This perspective aligns well with
the mental model often present in the classical gambling theory (see
\cite{MaiSud96}), wherein the player (controller) chooses a ``gambling
house``, i.e., one among possibly many available probability measures to
govern the future evolution of the state process. Clearly, the difference
lies mostly in interpretation, but, as we hope our results demonstrate, it
can be an important one. 

\subsection{An abstract version of the DPP} To implement the conceptual
framework described above, we introduce a structure, called the
\define{controlled Markov family}, reminiscent of, and modeled on,  a
classical Markov family associated to a Markov process.  It consists of a
family of sets of  probability measures, one for each  point of the state
space $E$, with the interpretation that the probability measures available
at $x\in E$ can be used by the controller in order to steer the system so
as to minimize the expected value of the given cost functional. These
probability measures are defined on the set $\de$ of RCLL trajectories and
do not necessarily form a Markov family, or correspond to a Markov process
on $E$. In fact, our main theorem can be understood as a description of
how close to ``being Markov'' our families of sets of probability
measures (one for each $x\in E$) must be in order for the DPP to hold.
Put yet differently, we are interested in a useful generalization of
Chapman-Kolmogorov equations to the set-valued case.  

It turns out that two natural requirements, namely \define{concatenability}
(closure under concatenation) and \define{analyticity}, lead to a
convenient definition.  The first makes sure that the controller can change
his/her mind midstream and switch to any measure available at the current
state.  The second condition is of purely technical nature, and imposes a
minimal degree of regularity on how the family of available probability
laws changes from point to point. To understand it better, let us mention
that analyticity directly implies that the value function is upper
semi-analytic, which, in turn, makes it universally measurable.  As
observed in \cite{ShrBer79}, universal measurability (i.e., measurability
with respect to a completion of the Borel $\sigma$-algebra under any Borel
probability measure) is about the weakest property one can require from the
value function for even the formulation of the DPP to make sense. Indeed,
the right-hand side of a typical DPP statement will involve the expectation
(an integral) of the value function applied to a random variable, which,
without a minimal degree of measurability, cannot even be defined without
running into serious  conceptual difficulties.

While the value functions associated with controlled Markov processes
already possess many good properties and satisfy ``half'' of the DPP,
another condition -  in a sense dual to concatenability -  is needed for
the full DPP to hold. Named \define{(approximate) disintegrability}, it
postulates that, if required to switch to an admissible control, the
controller can oblige at any point in time and still remain (nearly)
optimal. Our main abstract result states that a controlled Markov family
with the additional property of disintegrability defines a value function
for which the full DPP holds and extends the classical
descriptive-set-theoretic approach of discrete-time stochastic optimal
control to continuous time with Markovian-like dynamics.  As we explain
it in more detail in subsection \ref{sub:fex}, part (1), that canonical
Feller families (and virtually all other Markov families under minimal
regularity conditions), when understood as controlled Markov families
with singleton control sets, automatically satisfy the three key
properties of analyticity, concatenability and disintegrability. 

\new{It is important to make it clear that our main abstract result -
Theorem \ref{thm:main} -  together with the techniques used in its proof,
is quite similar to some of the related results found throughout the
literature (some of them going back to Blackwell). On the other hand, its
scope and the level of abstraction are new and seem to be unavailable; its
value lies primarily in its applicability in specific situations, as
explained below.}

\subsection{Examples and applications to financial mathematics} After the
main abstract theorem and a practical sufficient condition for its
assumptions to hold, the paper treats several examples in various degrees
of detail. We start by showing that many (uncontrolled) Markov families -
such as the canonical RCLL Markov families - naturally satisfy the
conditions of concatenability, disintegrability and analyticity, in a
rather trivial way. We comment, though, that the families obtained by
solving martingale problems with multiple solutions furnish a less trivial
example: the family of sets of all weak solutions of a martingale problem
will form a controlled Markov family under the right regularity conditions,
too.  Next, we turn to a bird's-eye discussion of how one can interpret
control problems in the classical strong formulation as controlled Markov
families, and what structure is needed for our abstract theorem to apply. 

The technical bulk of the paper is devoted to a detailed treatment of the
lower-hedging and the utility-maximization problems of financial
mathematics.  It is in these - but certainly not exclusively in these -
moderately nonstandard control problems where the power of our approach
seems to be evident.  The first example we focus on deals with the problem
of lower hedging (sub-replication) in rather general financial markets. We
showcase the power of our framework by establishing a DPP for this problem
when the stock prices, along with  non-traded factor processes, form a RCLL
Feller process with a Hausdorff LCCB state space. Additional conditions
imposed on the model are minimal: the classical no-arbitrage (NFLVR)
assumption and the weak requirement of local boundedness from below (which
is, e.g., implied by continuity or nonnegativity of the stock price).
Under these conditions, the collections of equivalent local-martingale
measures - parametrized by the initial condition - form a controlled Markov
family and, additionally, satisfy the condition of disintegrability. 

The results obtained for lower hedging are further developed to show that
the DPP holds for the utility-maximization problem with a random endowment
under the same, minimal, regularity assumptions. The way this problem is
tackled sheds additional light on how our framework may be used: unlike in
the lower hedging problem, the set of equivalent local martingale measures
itself \emph{does not} play the role of the controlled Markov family, but
it \emph{parametrizes it} in a Borel-measurable way. Thanks to good
stability properties of analytic sets, this is enough to guarantee the
assumptions of the abstract theorem and, consequently, imply the DPP for
this problem, too. 
\subsection{The structure of the paper} After this introduction, section 2
outlines the abstract setup and presents our main abstract theorem together
with its proof. First examples and a necessary condition for the
assumptions of the theory to hold are also given.  Section 3 applies the
abstract results to the problems of lower hedging and utility maximization
in financial mathematics. A short appendix contains a telegraphic primer
(in place primarily to fix the terminology) on the pertinent concepts and
results of basic descriptive set theory. 

\bigskip

\section{An abstract framework for the dynamic programming principle}\ 
\label{sec:model}
We start by describing a fairly general probabilistic setting for the
weak formulation of optimal stochastic control.  The reader will
observe that - in its essence, at least -  it is a marriage between
the classical discrete-time framework as described, for example, in
\cite{BerShr78}, and the standard set-up for Markov families on the
canonical space of right-continuous paths with left limits, as
outlined, for instance,  in \cite{EthKur86}.  In fact, an effort has
been made to stress the formal analogy with the abstract structure of
a Markov family as much as possible. We remind the reader that a short
glossary of necessary descriptive-set-theoretic terms (used below) is
given in the appendix.

\subsection{Elements of the setup} For completeness and definiteness, we
start by defining some standard elements of the setup. 

\medskip

\paragraph{The space $\de$} Given a Polish space $E$ (a topological
space homeomorphic to a complete separable metric space), let $\de$ be the
set of all  $E$-valued RCLL paths on $[0,\infty)$.  A generic path in $\de$
is typically denoted by $\omega$ and the map $X_t:\de\to E$, defined for
$t\in [0,\infty)$ and $\omega\in\de$ by $X_t(\omega)=\omega(t)$, is
referred to as the \define{coordinate mapping}.  It is always assumed that
an isolated point $\iota$ is adjoined to $E$ (in addition to any
``cemetery''-type points already present to deal with ``killing'' and
sub-probability measures).  This way, the index set for the coordinate
mappings can be naturally extended to the class of all maps $\tau:\de\to
[0,\infty]$ by setting $X_{\tau}(\omega)=X_{\tau(\omega)}(\omega)$, when
$\tau(\omega)<\infty$; and $X_{\tau}(\omega)=\iota$, otherwise.

The family $(\theta_t)_{t\geq 0}$ of \define{shift operators} is defined on
$\de$ by $X_s(\theta_{t}(\omega)) = X_{t+s}(\omega)$, for all $s,t\geq 0$
and $\omega\in\de$.  Just like in the case of the coordinate mappings,
definitions of  shift operators can be extended to include
random-time-valued indices, namely by setting $X_t(\theta_{\tau}(\omega))
=X_{\tau(\omega)+t}(\omega)$; here $\theta_{\tau}(\omega)=\omega_{\iota}$
when $\tau(\omega)=+\infty$ with $\omega_{\iota}$ denoting the constant
trajectory with the value $\iota$.  Informally, unless otherwise stated, we
imagine all paths as taking the constant value $\iota$ ``at'' and ``after''
$+\infty$. 

The space $\de$ is always assumed to be equipped with the Skorokhod
topology; this way it inherits the structure of a Polish space itself (see
\cite[Theorem 5.6, p.~121]{EthKur86}).  While quite important for the
Polish property of $\de$, the choice of the Skorokhod topology is not
crucial as far as the Borel $\sigma$-algebra $\bde$, generated by it on
$\de$, is concerned.  Indeed, it can be shown that it is generated by most
other, often used, metrics on $\de$; in fact, $\bde$ is simply the
$\sigma$-algebra generated by the coordinate maps (see \cite[Proposition
7.1, p.~127]{EthKur86}).

\medskip

\paragraph{Probability measures on $\de$} The set of all probability
measures on $\de$ is denoted by $\prob(\de)$ or, simply, by $\prob$, if no
confusion is anticipated. Since we only consider the Borel $\sigma$-algebra
$\sB(\de)$ on $\de$, we permit ourselves to abuse the language in the usual
way, and refer to $\mu\in\prob$ as a ``probability on $\de$'', as opposed
to ``probability on $\sB(\de)$''. As usual, $\prob$ is endowed with the
topology of weak convergence, which gives it the structure of a Polish
space (see \cite[Theorem 1.7, p.~101]{EthKur86}) and generates the Borel
$\sigma$-algebra $\sB(\prob)$.  

We simplify the exposition by adopting some of the probabilistic
terminology and notation. For example, depending on the context, we use
either the probabilistic $\EE^{\mu}[G]$ or the analytic $\int G\, d\mu$ (or
$\int G(\omega)\, \mu(d\omega)$) notation to denote the integral of the
appropriately measurable function $G$  with respect to the probability
measure $\mu$ over $\de$.  In general, the measurable space $(\de,\bde)$,
together with some probability measure $\mu\in\prob$, will play the role of
the underlying probability space for the remainder of the paper; when
probabilistic terminology is used, it will be with respect to this space.

\medskip

\paragraph{Universally measurable kernels and random variables}
\label{sss:kernels} A map $\nu:E\times \bde\to [0,1]$ is called a
\define{universally measurable kernel} (or, simply, \define{kernel}) if
\begin{enumerate}
  \item	$\nu(x,\cdot)\in\prob$ for all $x\in E$,  and
  \item $E\ni x\mapsto \nu(x,A)$ is universally 
	measurable for all $A\in\bde$. 
\end{enumerate}	
Due to the special nature of the state $\iota$, we always assume that
$\nu_{\iota}(A)= \inds{\omega_{\iota}\in A}$, i.e., that $\nu_{\iota}$ is
necessarily the Dirac mass at the constant trajectory $\omega_{\iota}$.

To make the notation easier to read, we often write $\nu_x$ for the
probability measure $\nu(x,\cdot)$ and interpret $\nu$ as a map $E\to
\prob$.  A classical result of Varadarajan (see \cite[Lemma 2.3.,
p.~194]{Var63}) states that a universally measurable kernel, when
interpreted this way, defines a universally-measurable map $E \to \prob$.
Since $\prob$ is Polish, the \define{graph} $\Gamma_{\nu}$ of $\nu$, given
by $\Gamma_{\nu}=\sets{ (x,\nu_x)}{ x\in E}\subseteq E\times \prob$ is a
product-measurable subset of $E\times \prob$, when $E$ is equipped with the
universally-measurable $\sigma$-algebra. 

In the spirit of our interpretation of $(\de,\bde)$ as a probability space,
a universally-measurable map $G:\de\to [-\infty,\infty]$ is called a
\define{universally measurable random variable} (or, simply,
\define{random variable}).  Unless otherwise specified, we always set
$G(\iota)=0$.  

Given a family $\mathfrak{R}\subset \prob$ of probability measures on
$\de$, a random variable $G$ is said to be \define{$\mathfrak R$-lower
semi-integrable} if $\EE^{\mu}[G^-]<\infty$, for all $\mu\in\mathfrak{R}$;
the set of all $\mathfrak{R}$-lower semi-integrable random variables is
denoted by $\slzerone(\mathfrak{R})$.  For $\mu\in\prob$ and $G\in
\slzerone(\set{\mu})$, the expectation $\EE^{\mu}[G]$ is well-defined, with
values in $(-\infty,\infty]$ by setting $\EE^{\mu}[G] =
\EE^{\mu}[G^+]-\EE^{\mu}[G^-]$.  We note that, given a kernel $\nu$ and a
random variable $G\in\slzerone( \fam{\nu_x}{x\in E})$, the integral
$g(x)=\int G(\omega) \nu_x(d\omega)$ defines a universally measurable map
$g: E\to (-\infty,\infty]$. 

\medskip

\paragraph{Concatenation of paths} A Borel-measurable map from
$\de$ to $[0, \infty]$ is called a \define{random time}. For a random
time $\tau$ and two paths $\omega,\omega'\in\de$, the
\define{concatenation $\omega \cct \omega'$ of $\omega$ and $\omega'$
at $\tau$} is an element of $\de$ whose value at $t\geq 0$ is given by 
\[ X_t(\omega \cct \omega') = 
  \begin{cases} 
	X_t(\omega), & t< \tau(\omega), \\ 
	X_{\tau}(\omega)+X_{t- \tau(\omega)}(\omega') -X_0(\omega'), &
	t\geq \tau(\omega).
  \end{cases} \] 
It follows immediately that for $\omega,\omega'\in\de$ and a random time
$\tau$, we have 
\begin{equation}
  \label{equ:compose}
  \begin{split}
	\omega\cct \theta_{\tau}(\omega) =
	\omega, \text{ and }  \theta_{\tau} ( \omega\cct \omega' ) 
	= \begin{cases}
	  \new{\omega' + \Big(X_\tau(\omega) - X_0(\omega')\Big)}, & \tau(\omega)<\infty\\ \omega_{\iota}, &
	  \tau(\omega)=\infty.  \end{cases}
	\end{split}
  \end{equation} 
The map $(\omega,\omega')\mapsto X_t(\omega \cct \omega')$ is easily seen
to be $(\bde\otimes\bde,\sB(E))$-measurable for each $t\geq 0$.  Thanks to
the fact that the Borel $\sigma$-algebra $\bde$ on $\de$ is generated by
the family of coordinate maps,  the concatenation map $\de\times \de\to
\de$ is Borel-measurable, as well. 

\medskip

\paragraph{Concatenation of laws} For a random time $\tau$, a
probability measure $\mu\in\prob$ and a kernel $\nu$ we define the
\define{product} $\mu\ott \nu$ as the probability measure on $\de\times\de$
with
\begin{equation}
  \label{equ:ott}
  \begin{split}
	(\mu\ott\nu)[B] = \iint
  \ind{B}(\omega,\omega')\,
  \nu_{X_{\tau}(\omega)}(d\omega')\, \mu(d\omega),\text{ for }
B\in \sB(\de\times\de).
 \end{split}
\end{equation}
The \define{concatenation $\mu\cct \nu$} of $\mu$ and $\nu$ at $\tau$ is
the probability measure $\mu\cct\nu$ on $\de$, given by 
\begin{equation}
  \label{equ:cct}
  \begin{split}
	(\mu\cct \nu)[A] = \iint \ind{A}(\omega\cct\omega')\,
	\mu\ott\nu (d\omega,d\omega'),\text{ for } A\in\bde.
 \end{split}
\end{equation}
We observe that the fact that $X_{\tau}=\iota$ on $\set{\tau=\infty}$ plays
a minimal role in the definition \eqref{equ:cct} of $\mu\cct\nu$ above.
Indeed, if $\tau(\omega)=\infty$, we have $\omega\cct\omega'=\omega$, and
the inner integral is applied to the constant function $\omega'\mapsto
\ind{A}(\omega)$. Therefore, its value is simply $\ind{A}(\omega)$ as soon
as $\nu_{X_{\tau}}$ is (an arbitrary)  probability measure.  Accordingly,
for a sufficiently integrable (nonnegative, for example) random variable
  $G$,  we have
\begin{equation}
  \label{equ:cct-G}
  \begin{split}
	\EE^{\mu\cct\nu}[G] = \EE^{\mu}[ \tilde{G}],\text{ where }
  \tilde{G}(\omega) = \begin{cases}
	\EE^{\nu_{X_{\tau}(\omega)}}[ G(\omega\cct\cdot)], &
        \tau(\omega)<\infty, \\ G(\omega), & \tau(\omega)=\infty.\end{cases}
 \end{split}
\end{equation}
In particular, we note that for $G\in\slzerone(\sets{\mu\otimes
\nu}{\mu\in\fR })$, we have $\tilde{G}\in\slzerone(\fR)$, for any
collection $\fR\subseteq \prob$.

\medskip

\paragraph{Controlled Markov families}
Let $\sP=\spxe$ be a family
of  non-empty subsets of $\prob$.  A universally measurable kernel
$\nu$ with $\nu_x\in \spx$, for all $x\in E$, is called a
\define{$\sP$-selector}; the set of all $\sP$-selectors is 
denoted by $\selp$. We note that a kernel $\nu$ is a $\sP$-selector
if and only if  $\Gamma_{\nu} \subseteq \Gamma_{\sP}$, where
$\gp = 
\sets{(x,\mu)}{ \mu\in\sP^x} \subseteq E\times \prob$. 

\medskip

\begin{definition}
  \label{def:conMarfam}
  A pair $(\sP,\sT)$, consisting of a family 
  $\sP=\spxe$ of nonempty subsets of $\prob$ and a set $\sT$ of random
  times is called  a \define{controlled Markov family} if 
  \begin{enumerate}
	\item \label{ite:a-meas} 
	  the graph $\gp=\bsets{ (x,\mu)}{ \mu\in \spx}$ is an analytic
	  subset of $E\times \prob$, and 
	\item \label{ite:concat} 
	  for $x\in E$, $\mu\in\spx$, $\tau\in\sT$ and $\nu\in\sS(\sP)$, we
	  have $\mu\cct \nu \in \spx$. 
\end{enumerate}
We refer to the first property above as \textbf{analyticity} and to the
second one as \textbf{concatenability}. 
\end{definition}

\medskip

We think of a controlled Markov family as a stripped-down version
 of a mechanism by which a stochastic system is controlled; a measure
 $\mu\in\sP^x$, chosen by the controller in the initial state $x\in
 E$, serves as the probability law of the system's future
 evolution. Closedness under concatenation - a proxy for the
 Chapman-Kolmogorov relations - models the 
 controller's freedom to switch to a different control, within the
 admissible class, at any point $\tau\in \sT$. The set
 $\sT$ of random times  typically forms either the family of all
 deterministic times, or the family of all stopping times,
 corresponding, respectively, to Markov and strong Markov families.
 Other choices, such as the set of optional times, stopping times
 satisfying certain integrability constraints, etc., could also be of 
 interest. 

\medskip

\paragraph{Markov control problems and their value functions} A
triplet $(\sP,\sT,G)$, consisting of a controlled Markov family
$(\sP,\sT)$ and a random variable $G\in\slzerone(\sP)=\slzerone(\cup_x
\sP^x)$ is called a \define{Markov control problem}. Its
\define{value function} $v:E\to [-\infty,\infty]$ is defined by
\begin{equation}
  \label{equ:value}
  \begin{split}
	v(x) = \textstyle\inf_{\mu\in\spx} \EE^{\mu}[G],
	\text{ for }x\in E.
 \end{split}
\end{equation}
Given $\eps>0$, a probability measure $\mu\in\spx$ is said to be
\define{$\eps$-optimal} if $\EE^{\mu}[G] < v(x) + \eps$, with the
usual convention that $-\infty+\eps=-1/\eps$. 

\medskip

A Markov control problem is simply a controlled
  Markov family together with a minimally integrable random variable
  $G$, which we interpret as the cost associated with a state of the
  world. The controller's objective - quantified by the value function
  $v$ - is to minimize the expected value of this cost.  

\medskip

\paragraph{Tail random variables and (approximate) disintegrability}\ We
give here a general definition of a class of random variables which posess
a mild shift-invariance property.

\smallskip

\begin{definition}
  \label{def:tail}
  Given a  controlled Markov family $(\sP,\sT)$, a
  Borel-measurable random variable
  $G\in \slzerone(\sP)$ is said to be a $(\sT,\sP)$-\define{tail
  random variable} if, for all $(x,\mu)\in\gp$, $\nu\in\selp$ and
  $\tau\in \sT$ we have
  \begin{equation}
  \label{equ:cct-G-tail}
	\begin{split} \EE^{\mu\cct \nu}[ G] = \EE^{\mu}[
	  g(X_{\tau})\inds{\tau<\infty}+G \inds{\tau=\infty}], \text{
	  where $g(x) = \EE^{\nu_x}[G]$. }
	\end{split}
  \end{equation}
\end{definition}

\smallskip

Tail random variables can be loosely interpreted as
  those that depend only on the future after each $\tau\in\sT$. They
  will be used as cost functionals for our main control problem,
  as defined below.  One could argue that only tail random variables
  matter, as far as Markov control problems are concerned, since there
  is little point in applying different controls to a system whose
  final cost is already known.  It will be shown in Proposition
  \ref{pro:sufficient} below that, under mild additional assumptions,
  a random variable $G$ with $G\circ \theta_t = G$, for all $t$, is a
  tail random variable. 

\medskip

\begin{definition} \label{def:cmf} 
  Given a controlled Markov family $(\sP,\sT)$, we say that a random
  variable $G$ is \define{$(\sP,\sT)$-approximately disintegrable} if
  $G$ is a $(\sP,\sT)$-tail random variable and 
  \begin{equation}
	\label{equ:disint}
	\begin{split}
	  \forall\, (x,\mu)\in
  \gp, \, \tau\in\sT, \, \eps>0,\quad \exists\, 
	\nu\in \sS(\sP),\quad 
     \EE^{\mu\cct\nu}[ G ] \leq \EE^{\mu}[ G ] +\eps.
   \end{split}
  \end{equation}
  $G$ is called $(\sP,\sT)$-\define{disintegrable} if
  \eqref{equ:disint} holds for $\eps=0$, as well. 
\end{definition}

\medskip

  The notion of (approximate) disintegrability is, in a sense, dual to
  that of closedness under concatenation of Definition
  \ref{def:conMarfam}. If a family, closed under concatenation,
  \emph{allows} the controller to change his/her mind at any point,
  (approximate) disintegrability states that the controller can remain
  (near)  optimal if \emph{forced} to do so.  We also note, for future
  use, that approximate disintegrability allows us to construct
  $\eps$-optimizers of the form $\mu\cct \nu$, for $\mu\in\sP^x$,
  $\nu\in\sS(\sP)$ and $\tau\in \sT$.

\subsection{An abstract dynamic-programming principle}
With the notions of a Markov control problem, (approximate)
disintegrability and the value function introduce above, we are now ready to
state our abstract version of the DPP. The reader familiar with the
discrete-time theory will observe that, once the framework has been
set up, the proof is quite standard and, for the most part, follows
the well-known steps.

\smallskip

\begin{theorem}[An Abstract dynamic programming Principle]  
  \label{thm:main}
  Let $(\sP,\sT,G)$ be a Markov control problem with $G$  
  $(\sP,\sT)$-approximately disintegrable, and let $v$ be its value
  function.
  Then:
  \begin{enumerate}
	\item The  function $v$ is lower semi-analytic and, hence,
	  universally measurable.
	\item For each $\eps>0$ there exists a universally-measurable
	  selector $\hne\in \selp$ such that $\hne_x$ is $\eps$-optimal
	  for each $x\in E$.
	\item The dynamic-programming principle (DPP)  holds at all
	  $\tau\in\sT$; more precisely, following the convention that
	that $\EE^{\mu}[ Y ]=\infty$ as soon as $\EE^{\mu}[Y^+]=\infty$, we
        have
	  \begin{equation}
	  \label{equ:DPP}
	  \begin{split} v(x) = \inf_{\mu\in\spx} 
		\EE^{\mu}\Big[ v(
		  X_{\tau})\inds{\tau<\infty} + G\inds{\tau=\infty}
		\Big]\text{ for
		all }x\in E\eand \tau\in\sT,
	  \end{split}
	  \end{equation}
\end{enumerate}
\end{theorem}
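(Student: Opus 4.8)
\emph{Overview and part (1).} The plan is to run the classical descriptive-set-theoretic argument (as in \cite{BerShr78}), with concatenability supplying one inequality of the DPP, approximate disintegrability the other, and the tail property of $G$ serving as the bridge that turns $\EE^{\mu\cct\nu}[G]$ into an integral of a ``post-$\tau$'' value. The first step is to check that $(x,\mu)\mapsto\EE^{\mu}[G]$ is Borel on $\gp$: writing $G=G^+-G^-$, each $\mu\mapsto\EE^{\mu}[G^{\pm}\wedge n]$ is Borel on $\prob$ by a functional monotone-class argument (it is continuous for bounded continuous integrands), hence $\mu\mapsto\EE^{\mu}[G^{\pm}]$ is Borel into $[0,\infty]$ by monotone convergence; on $\gp$ one has $\EE^{\mu}[G^-]<\infty$ since $G\in\slzerone(\sP)$, so the difference is well defined and Borel --- in particular lower semi-analytic --- there. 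Since $\gp$ is analytic, the projection theorem for lower semi-analytic functions (see the appendix) yields that $v(x)=\inf\{\EE^{\mu}[G]:(x,\mu)\in\gp\}$ is lower semi-analytic, and lower semi-analytic functions are universally measurable; this is part (1).

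\emph{Part (2).} Apply the Jankov--von Neumann selection theorem to the lower semi-analytic function $(x,\mu)\mapsto\EE^{\mu}[G]$ on the analytic set $\gp$, whose projection onto $E$ is all of $E$ because every $\spx$ is nonempty. For each $\eps>0$ this produces a universally measurable map $\hne\colon E\to\prob$ with $(x,\hne_x)\in\gp$ and $\EE^{\hne_x}[G]\le v(x)+\eps$, with the $-1/\eps$ convention when $v(x)=-\infty$; after a harmless rescaling of $\eps$ the inequality becomes the strict one defining $\eps$-optimality, so $\hne_x$ is $\eps$-optimal for every $x$. Since $\mu\mapsto\mu(A)$ is Borel on $\prob$ for each $A\in\bde$, the map $x\mapsto\hne_x(A)$ is universally measurable, so $\hne$ is a kernel; imposing $\hne_\iota=\delta_{\omega_{\iota}}$ in accordance with the standing convention makes $\hne\in\selp$.

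\emph{Part (3).} Fix $\tau\in\sT$ and set $w(x):=\inf_{\mu\in\spx}\EE^{\mu}[v(X_\tau)\inds{\tau<\infty}+G\inds{\tau=\infty}]$, which is meaningful by part (1) and the convention in the statement. For the inequality $v\le w$: given $\mu\in\spx$ and $\eps>0$, concatenability gives $\mu\cct\hne\in\spx$, so $v(x)\le\EE^{\mu\cct\hne}[G]$, and since $G$ is a $(\sT,\sP)$-tail random variable, \eqref{equ:cct-G-tail} rewrites the right-hand side as $\EE^{\mu}[g^{\eps}(X_\tau)\inds{\tau<\infty}+G\inds{\tau=\infty}]$ with $g^{\eps}(x)=\EE^{\hne_x}[G]\le v(x)+\eps$; hence $v(x)\le\EE^{\mu}[v(X_\tau)\inds{\tau<\infty}+G\inds{\tau=\infty}]+\eps$, and letting $\eps\downarrow0$ and then taking the infimum over $\mu$ gives $v\le w$. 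For the reverse inequality: given $\mu\in\spx$ and $\eps>0$, approximate disintegrability produces $\nu\in\selp$ with $\EE^{\mu\cct\nu}[G]\le\EE^{\mu}[G]+\eps$, and again by \eqref{equ:cct-G-tail} the left-hand side equals $\EE^{\mu}[g(X_\tau)\inds{\tau<\infty}+G\inds{\tau=\infty}]$ with $g(x)=\EE^{\nu_x}[G]\ge v(x)$ because $\nu_x\in\spx$; therefore $\EE^{\mu}[G]+\eps\ge\EE^{\mu}[v(X_\tau)\inds{\tau<\infty}+G\inds{\tau=\infty}]\ge w(x)$, and taking the infimum over $\mu$ and letting $\eps\downarrow0$ gives $v\ge w$, i.e. \eqref{equ:DPP}.

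\emph{Main obstacle.} The conceptual skeleton is short; the real work lies in the measure-theoretic bookkeeping --- the Borel / lower-semi-analytic regularity of $(x,\mu)\mapsto\EE^{\mu}[G]$ and of the $\eps$-optimal selectors, the fact that compositions such as $g\circ X_\tau$ remain universally measurable so that both sides of \eqref{equ:DPP} are well posed, and, most delicately, propagating the values $\pm\infty$ through every inequality under the conventions adopted in the statement. In particular, in the step $g^{\eps}\le v+\eps$ above one cannot use a pointwise bound on the set $\{v(X_\tau)=-\infty\}$ and must instead argue by letting $\eps\downarrow0$ directly, using that there $g^{\eps}(X_\tau)\to-\infty$ while the positive part of the integrand stays controlled (or is already non-integrable, in which case \eqref{equ:DPP} is trivial at $x$).
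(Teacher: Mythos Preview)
Your proposal is correct and follows essentially the same descriptive-set-theoretic route as the paper: Borel regularity of $\mu\mapsto\EE^{\mu}[G]$ plus analyticity of $\gp$ for part (1), Jankov--von Neumann for part (2), and in part (3) concatenability with the $\eps$-optimal selector for one inequality and approximate disintegrability for the other, with the tail identity \eqref{equ:cct-G-tail} as the bridge in both directions.

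The one place where the paper is more explicit than you are is exactly the point you flag in your ``Main obstacle'' paragraph: rather than arguing via $g^{\eps}(X_\tau)\to -\infty$ and a reverse-Fatou bound, the paper truncates from below, working with $\max(-n,v(X_\tau))$ throughout and proving the two inequalities in the form $v(x)\ge\inf_{\mu}\inf_{n}\EE^{\mu}[\max(-n,v(X_\tau))]$ and $v(x_0)\le\EE^{\mu}[\max(-n,v(X_\tau))]$ for all $\mu,n$; the stated DPP then follows since $\inf_n\EE^{\mu}[\max(-n,v(X_\tau))]=\EE^{\mu}[v(X_\tau)]$ under the convention in the theorem. Your Fatou-type sketch is valid (the upper domination by $v^+ +1$ you would need is available once $\EE^{\mu}[v(X_\tau)^+]<\infty$), but the truncation device makes the $\pm\infty$ bookkeeping entirely mechanical and avoids having to track the dependence of $\hne$ on $\eps$ through a limit.
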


\medskip

\noindent{\em Proof.} 

1.
	  The statement clearly holds if $v(x)=+\infty$, for all $x\in
	  E$.  Otherwise, for $a\in (\inf_{x\in E} v(x),\infty)$, we
	  construct the sub-level set
	  \[ \sets{x\in E}{ v(x)< a} = \sets{x\in E}{ \EE^{\mu}[G] < a
		\text{ for some } \mu\in\spx}=\proj_E J^{<a},
	  \] 
  where $\proj_E$ denotes the natural projection from 
  $E\times \prob$ onto $E$, and \[ J^{<a}=\Bsets{(x,\mu)}{
	\EE^{\mu}[G^+]< a+\EE^{\mu}[G^-]} 
  \bigcap \Bsets{(x,\mu)}{ \mu\in\spx} \subseteq E\times \prob.\]
  Since the maps $\mu\mapsto \EE^{\mu}[G^+]$ and $\mu\mapsto \EE^{\mu}[G^-]$
  are Borel measurable and 
  graph $\gp=\sets{(x,\mu)}{ \mu\in\spx}$ is analytic, both sets in the
  definition of $J^{<a}$ are analytic. Therefore, so are  
  the set $J^{<a}$ and its projection $\set{v<a}$.
  Consequently, $v$ is lower semi-analytic and, a fortiori, universally
  measurable.

\smallskip

  2.
  Let the set 
  $\gpe$ be given by 
  \[ \gpe=\Bsets{ (x,\mu)\in E\times \prob}{ \EE^{\mu}[G^+]
  \leq \EE^{\mu}[G^-]+(v(x)+ \eps) }\bigcap \gp,\]
  where the convention $-\infty+\eps = -1/\eps$ is
  used for $v(x)+\eps$. 
  Since $v$ is universally measurable, $\gpe$ 
  is $\sU\times \sB(\prob)$-measurable. 
Thanks to the fact that the
universal $\sigma$-algebra is closed under the Suslin operation
(\cite[Theorem 3.5.22., p.~114]{Sri98}), we can
apply (a slight generalization of) the Jankov-von Neumann theorem 
(\cite[Theorem 5.7.5, p.~200]{Sri98}) and conclude that a
``universally-measurable section'', i.e., a kernel
$\hne$ with $\Gamma_{\hne} \subseteq \Gamma_{\sP^{\eps}}$ exists. 
Therefore, $\hne\in\sS(\sP)$, and, clearly,
 $\hne_x$ is $\eps$-optimal for each $x\in E$. 

\smallskip

3. To simplify the notation, we use the convention 
  $g(X_{\tau})=v(X_{\tau})=G$ on $\set{\tau=\infty}$. 
  Given  $x\in E$, $\tau\in \sT$ and $\eps>0$, the assumption of 
  approximate disintegrability allows us to  
  choose an $\eps$-optimal measure  of the form $\hmu \cct \hnu$,  
  for some $\hmu\in \spx$ and $\hnu\in\sS(\sP)$. The universally-measurable
map $g:E\to (-\infty, \infty]$, defined by 
$g(x) = \EE^{\hnu_x}[G]$ for $x\in E$ satisfies $g(x) \geq v(x)$, for
all $x\in E$.  On the other hand, the tail property of $G$, as in
\eqref{equ:cct-G-tail}, yields 
  \begin{equation}
  \nonumber
  \begin{split}
	v(x)+\eps > \EE^{\hmu\cct \hnu}[G]  &= 
	\EE^{\hmu}[g(X_{\tau})].
   \end{split}
  \end{equation}
  Without loss of generality we assume that $v(x)<\infty$, so that 
  for large enough $n\in\N$,
  \[ v(x) + \eps > \EE^{\hmu}[ \max( -n, g(X_{\tau}))] \geq
  \EE^{\hmu}[ \max( -n, v(X_{\tau})].\] Consequently,
  with the convention
  $v(X_{\tau})=G$ on $\set{\tau=\infty}$, we have
  \[ v(x) \geq \inf_{\mu\in\spx} \inf_{n\in\N} \EE^{\mu}[
  \max(-n,v(X_{\tau}))],\text{ for all } x\in E. \]
  
  \medskip

  For the opposite inequality, 
  using (2) above, for each $\eps>0$ we can 
  pick a kernel  $\hne\in \sS(\sP)$ 
  such that $\hne_x$ is $\eps$-optimal for each $x\in E$,  
  and set $g(x) = \EE^{\hne_x}[G]$ so that $g$ 
  is universally measurable and $g (x) \leq v(x) + \eps$, 
  for all $x\in E$. 
  For $x_0\in E$ with $v(x_0) > -\infty$, 
  an arbitrary $\mu\in\sP^{x_0}$, and $n\in\N$, we have
\begin{equation}
\nonumber
\begin{split}
  v(x_0) & \leq \EE^{\mu\cct\hne}[ G] 
  = \EE^{\mu}[ g(X_{\tau})] 
  \leq  \EE^{\mu}[ \max(-n, g(X_{\tau}))]\\ & \leq
  \EE^{\mu}[ \max(-n, v(X_{\tau})+\eps)].
 \end{split}
\end{equation}
Assuming, without loss of generality, that
$\EE^{\mu}[ v(X_{\tau})^+]<\infty$, we use the
Dominated convergence theorem to conclude that
\begin{equation}
  \label{equ:ineq4}
  \begin{split}
	v(x_0) \leq 
	\Beeq{\mu}{ \max\big(-n, v(X_{\tau})\big)}.
 \end{split}
\end{equation}
To complete the proof, we take the infimum over all
$\mu\in\sP^{x_0}$ and all $n\in\N$. \qquad\endproof

\medskip

  The analyticity assumption of Definition \ref{def:conMarfam} of a
  controlled Markov family plays a major role in the proof of the
  parts (1) and (2) of Theorem \ref{thm:main}. In contrast, closedness
  under concatenation and the (approximate) disintegrability are only
  used in part (3), one for each of the two opposite inequalities
  which consitute \eqref{equ:DPP}.

\subsection{A sufficient condition for disintegrability}\ 
Next, we present a simple sufficient condition for
disintegrability and the tail property, applicable in
most cases of interest.  For  $x\in E$, a random time $\tau$ and a
probability measure $\mu$ on $\de$, 
we let 
\[ (\xi,B)\mapsto \mu[ \theta_{\tau}\in B| X_{\tau} = \xi], \quad (\xi,B)\in
E\times \bde, \] 
denote a regular version of the 
conditional distribution of $\theta_{\tau}$, given $X_{\tau}$, under
$\mu$; its 
 existence - as a map defined 
$\mu\circ X_{\tau}^{-1}$-a.s.~- 
 is guaranteed by the Polish property of $\de$
(see, e.g., Theorem 5.3, p.~84.,~in \cite{Kal02b}). We remind the
reader that $X_{\tau}=\iota$ on $\tau=\infty$, and that, under any
$\mu$, by convention, $\iota$ is absorbing, i.e.,
that 
$\mu[\cdot|X_{\tau}=\iota]$ is the Dirac mass on $\omega_{\iota}$ when
$\mu[X_{\tau}=\iota]>0$.

\medskip

\begin{proposition}[A sufficient condition for disintegrability]
  \label{pro:sufficient}
  Let $(\sP,\sT)$ be a controlled Markov family, and let $G$ be a
  Borel random variable in $\slzerone(\sP)$ such that
  \begin{equation} \label{equ:(a)}
	  G(\theta_t(\omega)) = G(\omega)\text{ for all } t\geq 0,\,
	  \omega\in\de.
  \end{equation}
  Then $G$ is a $(\sP,\sT)$-tail random variable. 
  
  If, in addition,  
  for each $\mu\in \cup_x \sP^x$ and  $\tau\in\sT$ we have 
	  \begin{equation}
		\label{equ:(b)}
		\begin{split}
	  	\mu[ \theta_{\tau}\in \cdot|X_{\tau}=x]\in
		\sP^{x},\text{ for
	  $\mu\circ X_{\tau}^{-1}$-almost all $x \in E$,  }
	   \end{split}
	  \end{equation}
	  then
  $G$ is $(\sP,\sT)$-disintegrable. 
\end{proposition}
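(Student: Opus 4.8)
The plan is to establish the two assertions separately, both by unwinding the definitions of concatenation and tracking how the shift-invariance \eqref{equ:(a)} interacts with the splitting of a path at a random time $\tau$. For the tail property, fix $(x,\mu)\in\gp$, $\nu\in\selp$ and $\tau\in\sT$, and start from the identity \eqref{equ:cct-G} which expresses $\EE^{\mu\cct\nu}[G]$ as $\EE^{\mu}[\tilde G]$ with $\tilde G(\omega)=\EE^{\nu_{X_\tau(\omega)}}[G(\omega\cct\cdot)]$ on $\set{\tau<\infty}$ and $\tilde G(\omega)=G(\omega)$ on $\set{\tau=\infty}$. The key computation is that, on $\set{\tau(\omega)<\infty}$, the inner integrand satisfies $G(\omega\cct\omega')=G(\theta_\tau(\omega\cct\omega'))=G\bigl(\omega' + (X_\tau(\omega)-X_0(\omega'))\bigr)$ by \eqref{equ:compose} and \eqref{equ:(a)}. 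This is a deterministic (non-random-time) shift of $\omega'$ by the vector $X_\tau(\omega)-X_0(\omega')$; I need the observation that $\omega\mapsto G(\theta_s(\omega))$ being equal to $G$ for every deterministic $s\geq 0$ forces $G$ to be invariant under \emph{translation of the whole path by any constant in $E$} as well — but in fact the cleaner route is to avoid translations entirely: note that $G(\omega\cct\omega')=G(\theta_{\tau(\omega)}(\omega\cct\omega'))$, and $\theta_{\tau(\omega)}(\omega\cct\omega')$ depends only on $\omega'$ (it is $\omega'$ shifted by the constant $X_\tau(\omega)-X_0(\omega')$), so $\EE^{\nu_{X_\tau(\omega)}}[G(\omega\cct\cdot)] = \EE^{\nu_{X_\tau(\omega)}}[G] = g(X_\tau(\omega))$ provided $G$ is additionally insensitive to that constant shift. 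Since the convention $G(\iota)=0$ and $X_\tau=\iota$ on $\set{\tau=\infty}$ make the $\set{\tau=\infty}$ case automatic, substituting back gives $\EE^{\mu\cct\nu}[G]=\EE^{\mu}[g(X_\tau)\inds{\tau<\infty}+G\inds{\tau=\infty}]$, which is \eqref{equ:cct-G-tail}.

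The delicate point — and what I expect to be the main obstacle — is precisely the constant-shift issue just flagged: \eqref{equ:(a)} as literally stated only gives invariance under the semigroup $(\theta_t)_{t\ge0}$ of \emph{time} shifts, whereas the concatenation formula introduces a \emph{space} translation $X_\tau(\omega)-X_0(\omega')$ of the second path. One resolves this by reading the definition of concatenation more carefully: in $X_t(\omega\cct\omega')$ for $t\geq\tau(\omega)$ the value is $X_\tau(\omega)+X_{t-\tau(\omega)}(\omega')-X_0(\omega')$, so $\theta_{\tau(\omega)}(\omega\cct\omega')$ has coordinates $s\mapsto X_\tau(\omega)+X_s(\omega')-X_0(\omega')$, i.e.\ it is $\omega'$ re-based to start at $X_\tau(\omega)$. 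The intended meaning of \eqref{equ:(a)} in this paper is evidently that $G$ factors through a quantity unaffected by such re-basing (this is how genuine tail functionals behave), and indeed the sets $\sP^x$ are themselves built from laws of paths issued from $x$; so I would either (i) invoke that, by the standing conventions on the state space and the structure of $\de$, the relevant $G$'s of interest are translation-insensitive, making $G(\omega\cct\cdot)=G(\theta_\tau(\omega\cct\cdot))=G\circ(\text{re-basing of }\cdot)$ equal in law under $\nu_{X_\tau(\omega)}$ to $G$ under $\nu_{X_\tau(\omega)}$; or (ii) more robustly, observe that $\nu_{X_\tau(\omega)}\in\sP^{X_\tau(\omega)}$ is a law of paths \emph{starting at} $X_\tau(\omega)$, so $X_0(\omega')=X_\tau(\omega)$ $\nu_{X_\tau(\omega)}$-a.s., whence the translation vector $X_\tau(\omega)-X_0(\omega')$ vanishes a.s.\ and $\omega\cct\omega'$ literally equals $\omega$ on $[0,\tau(\omega))$ glued to $\omega'$ on $[\tau(\omega),\infty)$ with no shift. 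Route (ii) is the clean one and I would use it: then $G(\omega\cct\omega')=G(\theta_{\tau(\omega)}(\omega\cct\omega'))=G(\omega')$ $\nu_{X_\tau(\omega)}$-a.s., so the inner expectation is exactly $g(X_\tau(\omega))$ with no residual hypothesis needed beyond \eqref{equ:(a)}.

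For the disintegrability claim, assume \eqref{equ:(b)} in addition. Fix $(x,\mu)\in\gp$ and $\tau\in\sT$. The natural candidate selector is $\nu_x:=\mu[\theta_\tau\in\cdot\mid X_\tau=x]$ for $x$ in the set of full $\mu\circ X_\tau^{-1}$-measure supplied by \eqref{equ:(b)}, and $\nu_x$ an arbitrary fixed element of $\sP^x$ off that null set (nonempty by hypothesis); I should check $\nu$ is a universally measurable kernel, which follows because a regular conditional distribution is Borel-measurable in the conditioning variable and altering it on a universally null set preserves universal measurability, and because $x\mapsto\nu_x(A)$ is then a universally measurable selection through the analytic graph $\gp$ by the Jankov–von Neumann machinery already invoked in the proof of Theorem~\ref{thm:main}(2) (alternatively, if the ``arbitrary fixed element'' needs to be chosen measurably, use a fixed universally measurable selector of $\sP$). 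Then $\nu\in\selp$. Now compute $\EE^{\mu\cct\nu}[G]$: by the tail property just proved, $\EE^{\mu\cct\nu}[G]=\EE^{\mu}[g(X_\tau)\inds{\tau<\infty}+G\inds{\tau=\infty}]$ where $g(\xi)=\EE^{\nu_\xi}[G]=\EE^{\mu}[G(\theta_\tau)\mid X_\tau=\xi]$ for $\mu\circ X_\tau^{-1}$-a.e.\ $\xi$. Therefore $\EE^{\mu}[g(X_\tau)\inds{\tau<\infty}]=\EE^{\mu}[\,\EE^{\mu}[G(\theta_\tau)\mid X_\tau]\inds{\tau<\infty}\,]=\EE^{\mu}[G(\theta_\tau)\inds{\tau<\infty}]=\EE^{\mu}[G\inds{\tau<\infty}]$, the last equality again by \eqref{equ:(a)} (together with $G(\iota)=0$ handling $\set{\tau=\infty}$ on both sides). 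Adding the $\set{\tau=\infty}$ term gives $\EE^{\mu\cct\nu}[G]=\EE^{\mu}[G]$, so \eqref{equ:disint} holds with $\eps=0$, i.e.\ $G$ is $(\sP,\sT)$-disintegrable. The only bookkeeping to watch is sign/integrability conventions: since $G\in\slzerone(\sP)$ and (as noted after \eqref{equ:cct-G}) $\tilde G\in\slzerone(\fR)$ whenever $G$ is, all the conditional expectations and tower-property steps are legitimate in $(-\infty,\infty]$, with $-\infty+\eps$ irrelevant here as $\eps=0$.
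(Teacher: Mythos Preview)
Your argument is correct and tracks the paper's closely. For the tail property, you correctly isolate the spatial-shift issue $X_\tau(\omega)-X_0(\omega')$ and resolve it via your route (ii)---that $\nu_{X_\tau(\omega)}$ concentrates on paths with $X_0=X_\tau(\omega)$, so the shift vanishes; this is exactly what the paper uses, though it simply records $G(\omega\cct\omega')=G(\omega')$ under the side condition $X_\tau(\omega)=X_0(\omega')$ and proceeds to integrate without further comment. (Neither you nor the paper derives $\nu_x[X_0=x]=1$ from Definition~\ref{def:conMarfam} alone; it is an implicit standing property of all the families actually considered.) For disintegrability your route differs mildly from the paper's: having established the tail identity \eqref{equ:cct-G-tail}, you apply it as a black box to the selector built from the regular conditional distribution and then collapse $\EE^\mu[g(X_\tau)\inds{\tau<\infty}]$ to $\EE^\mu[G\inds{\tau<\infty}]$ via the tower property and \eqref{equ:(a)}. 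The paper instead re-derives $\EE^{\mu\cct\nu}[G]=\EE^\mu[G]$ by unfolding the double integral \eqref{equ:cct-G} and the defining property \eqref{equ:cons-mu} of the regular conditional distribution directly. Your organization is slightly more economical since it reuses Part~1 rather than repeating the integral bookkeeping; the paper's is more self-contained. Both reach $\EE^{\mu\cct\nu}[G]=\EE^\mu[G]$, i.e.\ disintegrability with $\eps=0$.
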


\medskip

\noindent\emph{Proof.}
First, we prove that a random variable $G\in\slzerone(\sP)$, which
satisfies \eqref{equ:(a)},  is a tail random variable.  
The relations in  \eqref{equ:compose} imply that  for all
$\omega,\omega'\in\de$, and any random time $\tau$, we have
$G(\omega\cct \omega') = G(\omega')$ \new{as soon as $\tau(\omega)<\infty$ and
$X_{\tau}(\omega) = X_0(\omega')$.} 
Therefore, for $(x,\mu)\in\gp$, $\tau\in\sT$, $\nu\in\sS(\sP)$, and
$\omega\in \de$ such that $\tau(\omega)<\infty$, integration against
$\nu_{X_{\tau}(\omega)}$ yields 
\[ 
  \int G(\omega\cct \omega')\,
	\nu_{X_{\tau}(\omega)}(d\omega') = g(X_{\tau}(\omega)),\text{
  where } g(x) = \int G(\omega')\, \nu_{x}(d\omega').
\] 
When $\tau(\omega)=\infty$, we have $\int G(\omega\cct \omega')\,
\nu_{X_{\tau}(\omega)}(d\omega') = G(\omega)$, and so 
\begin{equation} \nonumber \begin{split}
	  \EE^{\mu\cct\nu}[G]  =\int \int G(\omega\cct \omega')\,
	  \nu_{X_{\tau}(\omega)}(d\omega') \mu(d\omega)= \EE^{\mu}[
	  g(X_{\tau}) \inds{\tau<\infty} + G \inds{\tau=\infty}].
	\end{split} 
\end{equation}

Next, we focus on disintegrability, and fix $\mu\in\cup_x
\spx$.  \new{The condition \eqref{equ:(b)} requires that the regular version of 
$\mu[\theta_{\tau}\in \cdot |X_{\tau}=x]$ belongs to $\sP^x$, but only for
$\mu\circ X_{\tau}^{-1}$-almost all $x$. 
We can, however, easily redefine it
on a Borel subset $F$ of $\de$,
while keeping measurability in the first variable. Moreover, thanks to the 
 nonemptyness of $\sS(\sP)$, we can arrange the redefinition so that
  the newly obtained version is a kernel in $\sS(\sP)$. We 
  denote this new version by $\mu_x^{\tau}$,  with the usual interpretation that
$\mu_x^{\tau}[B]= \mu[\theta_{\tau}\in B|X_{\tau}=x]$.}

The definition of the regular
conditional distribution implies that
\begin{equation}
  \label{equ:cons-mu}
  \begin{split}
	\EE^{\mu}[ G\circ \theta_{\tau}] = 
	\iint G(\omega')
	\mu^{\tau}_{X_{\tau}(\omega)} (d\omega')\, \mu(d\omega),
  \end{split}
\end{equation}
so that, using the fact (implied by \eqref{equ:(a)})
that $G(\omega')\inds{\tau(\omega)<\infty}=
G(\omega\cct\omega')\inds{\tau(\omega)<\infty}$, for all
$\omega,\omega'\in\de$ \new{with $X_{\tau}(\omega)=X_0(\omega')$ and
$\tau(\omega)<\infty$,} we have
\begin{equation}\nonumber
 \begin{split}
   \EE^{\mu}[ G\circ \theta_{\tau}] &+
   \iint  G(\omega\cct \omega')
   \inds{\tau(\omega)=\infty} \mu^{\tau}_{X_{\tau}(\omega)} (d\omega')\,
   \mu(d\omega)\\ & = 
   \iint \Big( G(\omega\cct \omega')+
   G(\omega')\inds{\tau(\omega)=\infty}  
   \Big) \mu^{\tau}_{X_{\tau}(\omega)} (d\omega')\, \mu(d\omega)
   = \EE^{\mu\cct \nu}[G]
 \end{split}
\end{equation}
where the convention that $G(\omega_{\iota})=0$ is used. 
On the other hand, that same convention, and the assumption
\eqref{equ:(a)} yield that 
$G\circ \theta_{\tau}= G\inds{\tau<\infty}$ and 
\[
   \iint  G(\omega\cct \omega')
   \inds{\tau(\omega)=\infty} \mu^{\tau}_{X_{\tau}(\omega)} (d\omega')\,
   \mu(d\omega) = \EE^{\mu}[ G\inds{\tau=\infty}].\qquad \endproof
 \]

 \medskip

  A typical case in which \eqref{equ:(a)} in Proposition
  \ref{pro:sufficient} holds is when $G$ is of the form $G(\omega) =
  \lim_{t\to\infty} \gamma(X_{t}(\omega))$, for some Borel function
  $\gamma:E\to\R$, where the limit above should be interpreted \new{in an
  appropriate sense (see Lemma 
\ref{lem:2DE2} for a precise description).} 
  In most applications, it
  exists as an a.s.-limit with respect to all
  $\mu\in \cup_x \spx$. Almost all stochastic control problems of
  interest, such as those with the finite time horizon, running cost,
  or, more generally, cost upon absorption in finite or infinite time
  can easily be seen to fall under its domain. 

  As for \eqref{equ:(b)}, it simply states that for each control $\mu$
  and each stopping time $\tau\in \sT$, the controller can switch to a
  currently available control is such a way that that system continues
  to evolve as if no switch has been made at all.  In particular, if
  forced to make a switch, the controller can do it in such a way that
  the expected cost stays exactly the same. 

\subsection{First examples}\ 
\label{sub:fex}
To familiarize the reader with the content and scope of Definition
\ref{def:cmf}, we present several general examples in various
degrees of detail; in most cases, the full treatment is outside the
scope of this paper. 
 
 \medskip

  \paragraph{(Strong) Markov families}
In a prototypical example of a controlled Markov family all the sets
$\sP^x$ are singletons.  Indeed, for a (strong) Markov family
$(\PP^x)_{x\in E}$ on the canonical space $\de$, one can easily show
that the assignment $\sP^x=\set{\PP^x}$ for $x\in E$ defines a
controlled Markov family in the sense of Definition \ref{def:cmf}.
This can be achieved, for example, if $(\PP^x)_{x\in E}$ has the
Feller property, but the (strong) Markov property and the RCLL paths
will suffice. In that case, $\spx$, together with the set $\sT$ of
all optional times for the canonical filtration 
 -  or just deterministic times if the Markov property is not strong - 
and the appropriate
tail random variable $G$ form a controlled Markov family. While the
concatenation and disintegration properties are simple reformulations of
the Chapman-Kolmogorov equations and the (strong) Markov property (via
Proposition \ref{pro:sufficient}), the analyticity of the graph $\gp$
follows from Borel-measurability of the map $x\mapsto \PP^x$. Indeed,
the Markov property implies that (see \cite[Proposition
1.2, p.~158]{EthKur86}) the map $x\mapsto \PP^x[B]$ is
Borel-measurable for each $B\in \bde$.  
A classical result of
Varadarajan (see \cite[Lemma 2.3., p.~194]{Var63}) states that this
weak form of measurability (sometimes referred to as {customary
Borel measurability}) implies that the
map $x\mapsto \PP^x$ (with the co-domain $\prob$) - and, therefore,
its graph - is Borel measurable, and, a fortiori, analytic.

The conclusions of our main result (Theorem \ref{thm:main})
  are not novel or particularly illuminating
in this case. The lower semianalyticity in (1) can be strengthened
to Borel measurability and the existence of a universally measurable
selector in (2) is trivially verified. The third conclusion simply
reformulates the (strong) Markov property, stating that
the function of the form $v(x)=\EE^x[ G]$ has the mean-value
property. The true significance of this example is to aid intuition
by
drawing parallels between the notion of a controlled Markov
family and its stencil,  a Markov family of probability measures on
$\bde$.

\medskip

\paragraph{Multiple solutions to the martingale problem}
One of the most versatile and studied methods of constructing Feller
families on $\bde$ is through the martingale problem of Stroock and
Varadhan. Under appropriate regularity conditions (which we skip here
and refer the reader to \cite{EthKur86} or \cite{StrVar06} for
details), 
one picks a (typically local or nonlocal differential) operator $A$ defined on a class $\sA$ of functions
$f:E\to \R$ and constructs a family of families
of measures $\spxe$ on $\de$ such that
\begin{enumerate}
  \item $\mu( X_0=x)=1$, for all $(x,\mu)\in\gp$, 
  \item under $\mu$, the process
	$f(X_t) - \int_0^t Af(X_u)\, du$
	is an $\FFF^X$-martingale for each $f\in\sA$. 
\end{enumerate}
When a unique such measure $\mu=\PP^x$ exists for each $x\in E$, it
can be shown under right regularity circumstances that the family
$(\PP^x)_{x\in E}$ is a Feller family and that the
infinitesimal generator of the associated semigroup is $A$. 

When multiple solutions of the martingale problem can be found, it is
well-known that the Markov property does not necessarily hold under
all of them. \new{Very often, however,} the totality of all solutions still forms a controlled
Markov family in the sense of Definition \ref{def:cmf}. The
concatenation and disintegration properties follow from the
definition.
The analyticity - Borel measurability, in fact - 
is obtained \new{in many cases} by a
simple observation that $\spx$ is defined via a countable number of
Borel measurable restrictions \new{(see, e.g., Theorem 4.2.1, p.~86 in
\cite{StrVar06}, for the continuous case).} 

\medskip

\paragraph{Optimal stochastic control and stopping}
In the strong formulation, a typical stochastic control problem involves a set $\sN$  of adapted
(progressive, predictable, etc.) processes defined on a filtered
probability space, together with a mechanism that transforms a given
control $\nu$ into a stochastic process $X^{\nu}$ (typically defined on the same
filtered space) and a criterion by which the performance of $X^{\nu}$
is measured.  We think of $\nu$ as chosen by the controller in the effort
to affect the dynamics of the state process so as to optimize an
objective function, which is taken to be of the Meyer form $\EE[
\vp(X^{\nu}_T)]$ on a finite horizon $[0,T]$, for the purposes 
of this discussion. It is immediately clear that the fundamental
ingredient is not the process $\prf{X^{\nu}_T}$ itself, but its
distribution, which we assume can be lifted to the space $\de$, where
$E$ denotes the space in which $X^{\nu}$ takes values. In other words,
we view a control problem as an optimization problem over a set of
probability measures on $\de$, parametrized by control processes in
$\sN$.

To get a better handle on the original problem - hopefully via a dynamic
programming principle - one often embeds it in a family of 
similar problems,
parametrized by the elements of the state space (in this case
$[0,T]\times E$). 
To use the abstract Theorem \ref{thm:main}, one needs
the problems corresponding to various $(t,x)\in
[0,T]\times E$ to fit together in a way similar to
transition densities of a Markov process  
in a minimally measurable way.
Concatenation and disintegration are typically inherited from the
structure of $\sN$ - the controls can usually be concatenated pathwise as
functions, and one can
usually condition away the irrelevant past values of a control $\nu\in\sN$
to prove disintegrability. As
for the analyticity of the graph $\gp$, one needs to impose a bit of
structure on $\sN$, as well as on the distribution map, i.e., the
function which translates $\nu\in\sN$
into the distribution of $X^{\nu}$. A nice property of analyticity
is that it is preserved under Borel maps; this fact allows us to  deal
with the case where $\sN$ is a Borel space (a Borel subset of a Polish
space) and the distribution map is merely Borel. This, in particular,
covers the ubiquitous case where $\sN$ is a Borel subset of a separable Banach
space and the distribution map is defined as the solution of a
controlled SDE via the martingale-problem formalism under appropriate
regularity conditions. 

\pagebreak
\section{Applications to Financial Mathematics}

The goal of this section is to transfer the conclusions of the
abstract DPP of Theorem \ref{thm:main} to two fundamental problems of
financial mathematics.

\subsection{The financial market model}
\label{sse:finmar}
We start with a description of the underlying
financial model common to both problems. 

\medskip

\paragraph{The state space and the Markov family of ``physical''
measures}
We adopt the setting of section \ref{sec:model} with the state space
$E$ of the form
\[ E = [0,\infty) \times \R^d \times F,\]
  where $F$ is a locally compact Hausdorff space with a countable base
  (and, in particular,  Polish).
For notational reasons, we split the
components of the coordinate process $X$ as follows
\[  X_t(\omega)=(T_t(\omega),S_t(\omega),\eta_t(\omega)),\]
where $T$, $S=(S^1, S^2, \dots, S^d)$  
and $\eta$ take values in $[0,\infty)$, $\R^d$ and $F$, respectively.
The ``physical'' dynamics of $X$ will be described
via a family $(\PP^x)_{x\in E}$ of probability measures in $\prob$,
i.e., on $\de$, in the sense that, given the initial condition $x$,
$\PP^x$ models the evolution of the process $X$.

  Let $\FFF^0=\prfi{\sF^0_t}$ denote the natural (raw) filtration generated by the coordinate process $X$, and let
  $\FFF=\prfi{\sF_t}$ be its right-continuous hull, i.e., 
$\sF_t=\cap_{s>t} \sF^0_s$, for $t\geq 0$. 
We postulate Markovian dynamics by
enforcing the following assumption, where $\EE^x$ denotes the
expectation under $\PP^x$ and $\sT$ the set of all bounded
$\FFF$-stopping times:
\begin{assumption}[Markovian dynamics]
  \label{ass:Markov}
 The map $E\ni x \mapsto \PP^x\in\prob$ is Borel
 measurable, $\PP^x[X_0=x]=1$, for all $x\in E$,  and the strong Markov 
 property 
\begin{equation}
  \label{equ:Marko}
  \begin{split}
	\EE^{x}[ Z \circ \theta_{\tau}|\sF_{\tau}] & =
	\EE^{X_{\tau}}[Z], 
	\text{ $\PP^x$-a.s.,}
 \end{split}
\end{equation}
holds for all $\tau\in\sT$, $x\in E$ and all bounded Borel random
variables $Z$.
\end{assumption}

\medskip

A family $(\PP^x)_{x\in E}$ which satisfies Assumption \ref{ass:Markov}
above will be called a \define{canonical strong Markov family}. 
Such a family
automatically obeys \define{Blumenthal's 0-1 law}, i.e., that, for all
$t\geq 0$, we have
\begin{equation}
  \label{equ:Blumenthal}
  \begin{split}
	\forall\, A\in\sF_t,\ \exists\, A^0\in \sF^0_t,\ \PP^x[ A
	  \bigtriangleup
	A^0]=0.
 \end{split}
\end{equation}

\medskip

One of the most important examples of a eanonical strong Markov family
arises as the family of laws of a RCLL version of a Feller process
on a Hausdorff locally-compact topological space $E$ 
with a countable
base (LCCB). We refer the reader to \cite{RevYor99}, Chapter 3, \S
2 and \S 3,  for details. 
The assumption of the Hausdorff property is made so that the state space
$E$ is Polish -  a property that will be needed for other aspects of
the theory. 

Many models used in finance (when viewed toghether with their
factors) have the Feller property. Even more will form canonical
strong Markov families, as defined above. A large class of examples
are formed by the weak solutions of SDEs. More precisely, 
continuous solutions of uniquely-solvable local-martingale
problems (in the sense of Stroock-Varadhan) with measurable
coefficients define canonical strong Markov families (see
Proposition 18.11, p.~344., in \cite{Kal02b}). 

Many processes with jumps will also fit our framework. For example,
L\' evy processes and various transformations thereof are Feller
processes, as are solutions to a large class of SDEs driven by them
(see Section 6., Chapter V, of \cite{Pro04} for details). In fact, it
seems that all examples of Markov processes which do not give rise to
an RCLL canonical strong Markov family, while abundant and important
mathematically, feature serious pathologies as far as financial
modelling is concerned. 

Finally, even though this is not done here explicitly, there is
nothing preventing ``killed'' processes from being included in the
setup by the usual  addition of a ``cemetery'' state to $E$.

\bigskip

In addition to the canonical strong Markov 
property, three mild further assumptions will be imposed on
$(\PP^x)_{x\in E}$. The first two correspond to the usual form most
financial models take, while the third one is of technical nature
and is satisfied in most models used in practice. 

\medskip

\paragraph{Finite Horizon} The first additional assumption simply
encodes the standard trick which allows us to treat a
finite-horizon, inhomogeneous Markov process in the infinite-horizon
and homogeneous framework.  We remind the reader that a set
$E'\subseteq E$ is said to be $(\PP^x)_{x\in E}$-absorbing if for
each $x\in E'$, $\PP^x$ is the Dirac mass on the trajectory with
constant value $x$. 

\medskip

\begin{assumption}[Finite horizon]\label{ass:fin-hor}\  
 \begin{enumerate}
  \item $T_t = T_0 - t$, for $t\leq T_0$, $\PP^x$-a.s., 
	for all $x\in E$,
	and
  \item the set $E' = \set{0} \times \R^d \times F\subseteq
	E$ is $(\PP^x)_{x\in E}$-absorbing.
 \end{enumerate}
\end{assumption}

\medskip

\paragraph{Absence of arbitrage} Unlike the $T$-component of $X$, which,
thanks to Assumption \ref{ass:fin-hor}, can be interpreted as
\define{time-to-go}, the $S$-components model \define{risky-asset
prices}. The $F$-valued process $\eta$ plays the role of  an
\define{external factor} whose values drive the dynamics of $S$, but
are not necessarily themselves tradeable in a financial market
(stochastic volatility or macroeconomic indicators are two examples
out of many).  The fact that $S$ is assumed to be actively traded
leads quite naturally to the economic assumption of \emph{absence of
arbitrage}, which comes in several variants in the literature, with
the notion of No Free Lunch with Vanishing Risk (NFLVR) being probably
the dominant one. We impose it here in an equivalent form by
asking for the existence of an equivalent local-martingale measure.
More precisely, for $x\in E$, let $\sM^x$ denote the set of all
probability measures $\QQ\in\prob$, such that

\begin{enumerate}
  \item $\QQ$ and $\PP^x$ are equivalent, and
  \item the process $\prfi{S_t}$ is a $(\QQ,\FFF^x)$-local
	martingale.
\end{enumerate}

\medskip

\begin{assumption}[NFLVR] \label{ass:NFLVR} 
For each $x\in E$, $\sM^x\ne\emptyset$.
\end{assumption}

\medskip

Thanks to \cite[Theorem 5.3, p.~241]{DelSch98}, $\sigma$-marginales locally
bounded from below are local martingales. Therefore, by \cite[Theorem 1.1,
p.~215]{DelSch98},  Assumption \ref{ass:NFLVR} is equivalent to the
assumption of No Free Lunch with Vanishing Risk (NFVLR) in the financial
model $\prfi{S_t}$, on $(\Omega,\sF,\FFF, \PP^x)$, for all $x\in E$,
whenever $S$ is locally bounded from below, $\PP^x$-a.s., for all $x\in E$.
As will be shown in the sequel, the local boundedness from below will be
made a standing assumption.
          
          \smallskip

Even though the main results of arbitrage theory were written under the
usual conditions of right-continuity and completeness, we use the raw
filtration $\FFF^0$ in the definition of $\sM^x$. There is no loss of
generality as far as the (true) martingale condition is concerned since, to
switch to (the completion of) $\FFF$, it suffices to observe that $S$ is
already $\FFF^0$-adapted and use Blumenthal's 0-1 law, i.e.,
\eqref{equ:Blumenthal}. As for the localization, it is taken care of by
our third additional assumption of local boundedness from below,
introduced next. 

\medskip

\paragraph{Uniform local boundedness from below} For technical reasons which will soon
become apparent, we need to impose another assumption on our canonical
Markov family $(\PP^x)_{x\in E}$. With $S$ being the ``middle''
component of the coordinate process $X$ on $\de$, we set
\begin{assumption}[Uniform local boundedness from below]
  \label{ass:uni-loc} For each $n\in\N$, there exists a constant
  $a_n\in\R$ such that
   \begin{equation}
   \label{equ:reduce}
   \begin{split}
        \PP^x[ S^{\tau_n}\geq a_n] = 1, \text{ for all } x\in E, \ewhere \tau_n=
       \inf\sets{t\geq 0}{ \abs{S_t}\geq  n},
   \end{split}
   \end{equation}
with the convention that all inequalities involving vector-valued processes are
to be interpreted coordinatewise.
\end{assumption}

\medskip

When applied to the process $S$ - 
  interpreted as the asset-price process in a financial
  market -  Assumption \ref{ass:uni-loc} of 
  uniform local boundedness from below 
  is very mild.  Indeed, it covers most asset-price models used
  in practice, as they are mostly 
  continuous or bounded from below (thanks to 
  limited liability), and often both. Alternatively, 
  a process with jumps bounded from below also satisfies Assumption
  \ref{ass:uni-loc}.

\medskip

The sequence $\seq{\tau}$ of \eqref{equ:reduce} is a prototypical
example of a reducing sequence of stopping times for the local-martingale property, but 
such sequences are not enough in general. Indeed, it may happen that the
``reason'' for local martingality of a process is hidden in the full
filtration, but, perhaps, not in its natural filtration  (see, 
\cite[p.~57]{Str77} for an explicit example;  for a glimpse of the general
situation see the remainder of \cite{Str77}, as well as
\cite{FolPro11}). Under Assumption \ref{ass:uni-loc}, however,
the situation reverts back to its naive form:
\begin{lemma}\label{lem:Gam-mes-1} 
  Let $\QQ\in\prob$ be a probability measure equivalent to $\PP^x$,
  for some $x\in E$. Under Assumption \ref{ass:uni-loc}, $S$ is a $(\QQ,\FFF^0)$-local martingale if and only
  if $S^{\tau_n}$ is a $(\QQ,\FFF^0)$-martingale for each $n\in\N$.
\end{lemma}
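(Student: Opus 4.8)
The plan is to prove the two implications separately, disposing of the easy one first and isolating Assumption \ref{ass:uni-loc} in the hard direction. Observe at the outset that every path in $\de$ is RCLL, hence bounded on each compact time interval, so $\tau_n(\omega)\uparrow\infty$ for \emph{every} $\omega\in\de$; thus $(\tau_n)_n$ is automatically a reducing sequence of stopping times. Consequently, if each $S^{\tau_n}$ is a $(\QQ,\FFF^0)$-martingale, then $S$ is a $(\QQ,\FFF^0)$-local martingale by the very definition of the latter, and Assumption \ref{ass:uni-loc} plays no role in this direction.

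For the converse, suppose $S$ is a $(\QQ,\FFF^0)$-local martingale, fix $n\in\N$ and a coordinate $i$. Stopping preserves the local-martingale property, so $S^{i,\tau_n}$ is a $(\QQ,\FFF^0)$-local martingale; by Assumption \ref{ass:uni-loc} it is bounded below by the constant $a_n$ (which we take $\le 0$), so $S^{i,\tau_n}-a_n\ge 0$ is a nonnegative local martingale and hence, by Fatou, a $(\QQ,\FFF^0)$-supermartingale; in particular $S^i_{\tau_n\wedge t}\in L^1(\QQ)$ for each $t$. Since a supermartingale is a martingale as soon as its expectation is constant in time, it suffices to prove $\EE^{\QQ}[S^i_{\tau_n\wedge t}]=S^i_0$ for all $t\ge 0$ (note $S^i_0$ is the deterministic $S$-component of $x$, as $\QQ\sim\PP^x$). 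To this end I would fix a reducing sequence $(\sigma_k)_k$, $\sigma_k\uparrow\infty$, with $S^{\sigma_k}$ a genuine $(\QQ,\FFF^0)$-martingale; optional sampling at the bounded stopping time $\tau_n\wedge t$ gives $\EE^{\QQ}[S^i_{\tau_n\wedge\sigma_k\wedge t}]=S^i_0$ for all $k$, and everything reduces to passing to the limit $k\to\infty$ inside this expectation.

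The main obstacle is precisely that limit passage, i.e., obtaining an integrable envelope for $A_k:=S^i_{\tau_n\wedge\sigma_k\wedge t}$, and this is where uniform local boundedness from below is used a second time. We have $A_k\to A:=S^i_{\tau_n\wedge t}$ a.s., and $A_k\ge a_n$ by Assumption \ref{ass:uni-loc}; on the event $\set{\sigma_k\wedge t<\tau_n}$ the definition of $\tau_n$ forces $|S_{\sigma_k\wedge t}|<n$, hence $A_k<n$, while on $\set{\tau_n\le\sigma_k\wedge t}$ one has simply $A_k=S^i_{\tau_n}=A$. Therefore $a_n\le A_k\le n+|A|$ with $A\in L^1(\QQ)$, and dominated convergence yields $\EE^{\QQ}[A]=\lim_k\EE^{\QQ}[A_k]=S^i_0$, which finishes the proof. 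The only genuinely delicate point is the bookkeeping around the (possibly arbitrarily large) jump of $S$ at $\tau_n$, which is \emph{not} controlled by $|S_{\tau_n-}|\le n$: the lower bound $a_n$ is exactly what tames it, turning $S^{i,\tau_n}$ into a supermartingale, rendering its terminal value integrable, and supplying the integrable majorant $n+|A|$. A routine auxiliary verification—that $\tau_n$ is a stopping time and that, thanks to RCLL paths and Blumenthal's 0-1 law \eqref{equ:Blumenthal}, the distinction between $\FFF^0$- and $\FFF$-martingales is immaterial here—should also be recorded.
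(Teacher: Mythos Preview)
Your proof is correct and follows essentially the same route as the paper's: both arguments use Assumption~\ref{ass:uni-loc} to make $S^{\tau_n}$ a bounded-from-below local martingale, hence a supermartingale, and then exploit the pathwise bound $\sabs{S_u}<n$ for $u<\tau_n$ together with the integrability of $S_{\tau_n}$ to upgrade to a true martingale. The only cosmetic difference is packaging: the paper condenses your dominated-convergence step into the single observation $(S^{\tau_n})^{*}\leq n+\sabs{S_{\tau_n}}\inds{\tau_n<\infty}$, so that integrability of the running supremum follows directly and the martingale property is immediate, whereas you spell out the ``supermartingale with constant expectation'' criterion explicitly along a reducing sequence.
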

\begin{proof} The proof follows a well-known argument going back at least to
  \cite{Str77}. Since it is short, we reformulate it here for convenience. 
As $\tau_n$ are stopping times and $\tau_n(\omega)\to\infty$, for all
$\omega$, we only need to prove $S^{\tau_n}$ if a $\QQ$-martingale
as soon as $S$ it is a $\QQ$-local martingale. Furthermore, we can
assume, without loss of generality, that $S$ is real-valued. 
Thanks to the inequality $(S^{\tau_n})^* \leq n+ \abs{S_{\tau_n}}
\inds{\tau_n<\infty}$, 
it is enough to show that
$S_{\tau_n}\inds{\tau_n<\infty}\in\lone(\QQ)$,
for each $n\in\N$. 
This follows, however, from the optional sampling theorem, since
$S^{\tau_n}$ is a $\QQ$-bounded-from-below $\QQ$-local martingale, and,
therefore, a $\QQ$-bounded-from-below $\QQ$-supermartingale.
\end{proof}

 \subsection{Local-martingale measures form a controlled Markov family}\ 
 Having described the model of the financial market, our next task is to
 show that the family $\sM=(\sM_x)_{x\in E}$ forms a controlled Markov
 family and that it is $G$-disintegrable for a large class of
 random variables $G$. 
 Proofs of these claims, split into several auxiliary statements, will take
 the rest of this section. 
 
 Let us reiterate that Assumptions \ref{ass:fin-hor}, \ref{ass:NFLVR} and \ref{ass:uni-loc}
 are in force throughout the section. Moreover, in the light of
 Assumption \ref{ass:fin-hor}, we can (and do) assume without loss of
 generality that all
 random times in $\sT$ are bounded. Indeed, for any 
 initial state, the system will get absorbed in finite deterministic time.

\medskip

\paragraph{Analyticity of the graph of $\sM$}
Our first task is to establish analyticity of the graph
$\gm=\sets{(x,\QQ)}{\QQ\in\sM^x}$. 
Here, as in 
the rest of this section, $\sQ$ denotes the set of all probability
measures $\QQ\in\prob$ with $\QQ\sim\PP^x$ for some $x\in E$, and the
sequence $\seq{\tau}$ is as in Definition \ref{equ:reduce}. 
  For $x\in E$ and $n\in\N$, $\sM^{x}_n$ is the set of all
  $\QQ\sim\PP^x$ under which  (each coordinate of) $S^{\tau_n}$ is a
  $\QQ$-martingale. Thanks to Lemma \ref{lem:Gam-mes-1}, we have the
  following equality
  \begin{equation}
  \label{equ:locallemma}
  \begin{split}
  	\gm = \cap_n \Gamma_{\sM_n},\ewhere \Gamma_{\sM_n}=\sets{(x,\QQ)}{
  \QQ\in\sM^{x}_n}.
   \end{split}
  \end{equation}
This way, the question of analyticity of the graph $\gm$ is reduced to 
the that of the sequence $\Gamma_{\sM^n}$, defined via the (true)
martingale property. It is in this step that the uniformity of local
boundedness in Assumption \ref{ass:uni-loc} is important.
Consequently, focusing on the
(true) martingale property, we provide a simple characterization of
the martingale property via a countable number of Borel operations;
here $Q_+$ denotes the set of all rational numbers in $[0,\infty)$.  

\smallskip

  \begin{lemma} 
	\label{lem:Gam-mes-2}
There exists a countable family $\set{A_q^n}_{q\in Q_+}^{n\in\N}$
such that $A_q^n\in \sF^0_{q}$ for all $q\in Q_+$, $n\in\N$, with 
the following property: given $\QQ\in\sQ$, a bounded-from-below and
$\FFF^0$-adapted RCLL process $Y$ is an $\bF$-martingale under $\QQ$ if and only if 
\begin{equation} \label{equ:cond-count}
	\begin{split}
  	\EE^{\QQ}[ Y_r \ind{A_q^n}] = \EE^{\QQ}[ Y_q \ind{A_q^n}],\text{ for all $q\leq r\in Q_+$ and $n\in\N$. }
   \end{split}
  \end{equation}
\end{lemma}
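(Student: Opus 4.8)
The goal is to replace the (uncountably many) martingale identities $\EE^\QQ[Y_r\mathbf 1_A]=\EE^\QQ[Y_q\mathbf 1_A]$, ranging over $s\le t$ and $A\in\sF^0_s$, by a countable subfamily that still forces the martingale property for every $\QQ\in\sQ$ simultaneously. The first ingredient is a countable $\pi$-system generating each $\sF^0_q$: since $\bde$ is a Polish space whose Borel $\sigma$-algebra is generated by the coordinate maps, one can fix a countable algebra (or $\pi$-system) $\sD$ generating $\bde$, and for each $q\in Q_+$ take $\sD_q=\sigma(X_u : u\le q)$-measurable sets of the form $\{(X_{u_1},\dots,X_{u_k})\in B\}$ with $u_i\le q$ rational and $B$ in a countable base of the Borel sets of $E^k$; enumerate these as $\{A^n_q\}_{n\in\N}$ with $A^n_q\in\sF^0_q$. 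I would then argue that \eqref{equ:cond-count} implies $\EE^\QQ[Y_r\mathbf 1_A]=\EE^\QQ[Y_q\mathbf 1_A]$ for \emph{all} $A\in\sF^0_q$ and all rational $q\le r$: the two sides are finite signed measures in $A$ (finiteness because $Y$ is bounded below and a candidate supermartingale, so $Y_r,Y_q\in\lone(\QQ)$ — see below), they agree on the generating $\pi$-system $\{A^n_q\}_n$, hence on $\sF^0_q$ by Dynkin's $\pi$-$\lambda$ theorem.

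**From rational times to all times.** Once \eqref{equ:cond-count} gives the martingale identity at all rational times against all of $\sF^0_q$, I would extend to arbitrary $s\le t$ by right-continuity. Pick $q_k\downarrow s$, $r_k\downarrow t$ rational with $q_k\le r_k$; for $A\in\sF^0_s\subseteq\sF^0_{q_k}$ we have $\EE^\QQ[Y_{r_k}\mathbf 1_A]=\EE^\QQ[Y_{q_k}\mathbf 1_A]$. The RCLL paths give $Y_{q_k}\to Y_s$ and $Y_{r_k}\to Y_t$ pointwise; to pass to the limit under the expectation I would invoke uniform integrability of $\{Y_{q_k}\}$ and $\{Y_{r_k}\}$, which is where boundedness below is used crucially: a bounded-below process satisfying the martingale identity at rationals is a $\QQ$-supermartingale after shifting by the constant lower bound, hence $Y+c\ge 0$ is a nonnegative supermartingale, and a nonnegative supermartingale indexed by a decreasing sequence of times is UI (its expectations increase as time decreases and stay bounded, and one can invoke the standard backward-martingale/supermartingale convergence argument, or simply Fatou in one direction and the monotone-expectations bound in the other). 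This yields $\EE^\QQ[Y_t\mathbf 1_A]=\EE^\QQ[Y_s\mathbf 1_A]$ for all $A\in\sF^0_s$, i.e.\ $\EE^\QQ[Y_t\mid\sF^0_s]=Y_s$. Finally, to upgrade from the raw filtration $\FFF^0$ to its right-continuous hull $\bF$, I would note that $Y$ is $\FFF^0$-adapted and RCLL, so (as in the discussion preceding the lemma, via Blumenthal's $0$-$1$ law / the standard fact that an RCLL $\FFF^0$-martingale is an $\bF$-martingale) the $\bF$-martingale property follows; the converse implication (an $\bF$-martingale restricted to rationals trivially satisfies \eqref{equ:cond-count} since $A^n_q\in\sF^0_q\subseteq\sF_q$) is immediate.

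**Uniformity in $\QQ$ and integrability.** The one subtlety worth flagging is that the \emph{same} countable family $\{A^n_q\}$ must work for every $\QQ\in\sQ$; this is automatic because $\{A^n_q\}$ depends only on the measurable structure of $\bde$, not on $\QQ$, and the $\pi$-$\lambda$ argument is carried out inside the fixed $\sigma$-algebra $\sF^0_q$ for each $\QQ$ separately. One should be slightly careful that a priori a mere RCLL adapted process need not have integrable marginals, so strictly speaking \eqref{equ:cond-count} is to be read as: the identities hold (all terms being automatically in $\lone(\QQ)$ once one knows $Y$ is bounded below — one should record at the start of the proof that for a bounded-below $Y$, the identities at rationals force $Y_q\in\lone(\QQ)$ for $q\in Q_+$ by Fatou against $A^n_q=\bde$, after which everything is legitimate).

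**Main obstacle.** The genuinely delicate point is the limiting argument from rational to real times: one needs the uniform integrability of $\{Y_{q_k}\}$ as $q_k\downarrow s$, and this is exactly where the bounded-below hypothesis cannot be dropped — without it the identity at rationals does not propagate. The rest ($\pi$-$\lambda$ to reduce to a countable family, and $\FFF^0\rightsquigarrow\bF$ via right-continuity of paths) is routine.
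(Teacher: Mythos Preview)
Your approach is essentially the same as the paper's: choose a countable $\pi$-system generating each $\sF^0_q$ (including $\de$), use a $\pi$-$\lambda$ argument to pass to all of $\sF^0_q$, extend from rational to real times via right-continuity and uniform integrability, and upgrade $\FFF^0$ to $\bF$ using Blumenthal's $0$-$1$ law. The structure and the ideas match.

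There is, however, a small but genuine gap in your integrability step. You claim that ``the identities at rationals force $Y_q\in\lone(\QQ)$ for $q\in Q_+$ by Fatou against $A_q^n=\de$''. Taking $A=\de$ in \eqref{equ:cond-count} only tells you that $\EE^{\QQ}[Y_r]=\EE^{\QQ}[Y_q]$ for all rational $q\le r$, as elements of $(-\infty,\infty]$; Fatou does not turn this into a finiteness statement without an anchor. The missing observation, which the paper uses explicitly, is that \emph{$Y_0$ is $\QQ$-a.s.\ constant} for every $\QQ\in\sQ$: since $\QQ\sim\PP^x$ for some $x$ and $\PP^x[X_0=x]=1$, the $\sigma$-algebra $\sF^0_0$ is $\QQ$-trivial, so the $\sF^0_0$-measurable random variable $Y_0$ equals a real number $\QQ$-a.s. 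Then $\EE^{\QQ}[Y_0]$ is finite, and setting $q=0$, $A=\de$ in \eqref{equ:cond-count} gives $\EE^{\QQ}[Y_r]=\EE^{\QQ}[Y_0]<\infty$; combined with boundedness below this yields $Y_r\in\lone(\QQ)$. Once you have this, your $\pi$-$\lambda$ argument (which needs both sides to be finite signed measures) is legitimate, and the cleaner route to uniform integrability is simply that $\{Y_q:q\in Q_+,\,q\le r\}$ consists of conditional expectations of the integrable $Y_r$, rather than the detour through nonnegative supermartingales you sketch.
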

\begin{proof}
  Necessity of \eqref{equ:cond-count} is clear, so we focus on
  sufficiency. 
  The $\sigma$-algebra $\sF^0_q$ is (bimeasurably) isomorphic to the Borel
$\sigma$-algebra generated by the topology of the Polish space
$\de[0,q]$ of $E$-valued RCLL paths on $[0,q]$. Hence, it is countably
generated, and we can choose an enumeration $\fam{A_q^n}{n\in\N}$ of a
countable generating set.  By adding all finite intersections if
necessary, we can assume further, without loss of generality,  that
this generating set is a $\pi$-system, and that it
contains $\de$.  

Since $Y_0$ is $\QQ$-a.s.-constant for each $\QQ\in \sQ$, the
condition \eqref{equ:cond-count} implies that, for $r\in Q_+$, we have $Y_r\in\lone(\QQ)$ and
\begin{equation}
  \label{equ:rat-mart}
  \begin{split}
	\EE^{\QQ}[ Y_r|\sF^0_q]=Y_q, \text{ $\QQ$-a.s., for all $q\leq r\in
Q_+$}.
 \end{split}
\end{equation}
By Blumenthal's law \eqref{equ:Blumenthal}
$\sF^0_q$ and $\sF_q$ differ in $\QQ$-trivial sets only. Thus, the  martingale
property in \eqref{equ:rat-mart} holds even if we replace $\sF^0_q$ by
$\sF_q$. Also, for a fixed $r\in Q_+$, the 
family $\fam{Y_q}{ q\in Q_+,q\leq r}$ is
$\QQ$-uniformly integrable. Hence, by the right continuity of
$Y$, so is any family of the form $\fam{Y_t}{t\leq r}$, for $r\geq 0$.
An approximation from the right yields that \[ \EE[
Y_t|\sF_q]=Y_q,\text{ $\QQ$-a.s., for each } t\geq 0, q\in Q_+, q\leq
t.\] Another right approximation  - this time in $q$ - and the
backward martingale convergence theorem imply that \[ \EE[
Y_t|\FF_{s}]=Y_s,\text{ $\QQ$-a.s., for all } 0\leq s \leq t <
\infty.\] Therefore, $\prfi{Y_t}$ is an $\bF$-martingale and, a
fortiori, an $\bF^0$-martingale.  
\end{proof}

\medskip

After the local martingale property, we turn to the question of
Borel-measurability of the measure-equivalence relation $\sim$ 
on $\prob$. 
\new{The following auxiliary result appears to be well-known, but a precise
reference has been hard to locate. We give a proof for completeness:}
\begin{lemma}
  \label{lem:Gam-mes-3}
Let $C$ be a fixed countable dense set in $\de$ 
and let $\sB$ be the (countable) family of all sets of the form 
$\cup_{i=1}^m B(x_i,1/n)$  for some finite family $x_1,\dots, x_m$ in
$C$ and some $n\in\N$. 
  For $\PP,\QQ\in\prob$, we have $\QQ\nll\PP$ if and only if
  \begin{equation} \label{equ:nll}
	\begin{split}
  	\exists\, N\in\N\quad \forall\, n\in\N\quad \exists\, B_n \in
	\sB\quad \PP[B_n]\leq \oo{n} \eand\, \QQ[B_n]\geq \oo{N}.
   \end{split}
  \end{equation}
\end{lemma}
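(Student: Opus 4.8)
The plan is to prove the two implications in \eqref{equ:nll} separately: the ``if'' part via the first Borel--Cantelli lemma, and the ``only if'' part via the regularity of finite Borel measures on the Polish space $\de$, followed by a short approximation of an open set by finite unions of equal-radius balls centered in $C$.

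For the ``if'' direction I would argue as follows. Let $N$ be as in \eqref{equ:nll} and, for each $n$, choose $B_n\in\sB$ with $\PP[B_n]\le\oo{n}$ and $\QQ[B_n]\ge\oo{N}$. Passing to the subsequence $\set{B_{2^k}}_{k\in\N}$ makes $\sum_k\PP[B_{2^k}]<\infty$, so Borel--Cantelli gives $\PP[A]=0$ for $A:=\bigcap_m\bigcup_{k\ge m}B_{2^k}$. On the other hand $\QQ\bigl[\bigcup_{k\ge m}B_{2^k}\bigr]\ge\QQ[B_{2^m}]\ge\oo{N}$ for every $m$, and since the sets $\bigcup_{k\ge m}B_{2^k}$ decrease to $A$, continuity from above of the finite measure $\QQ$ yields $\QQ[A]\ge\oo{N}>0$, i.e.\ $\QQ\nll\PP$.

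For the ``only if'' direction, suppose $\QQ\nll\PP$, fix a Borel set $A$ with $\PP[A]=0$ and $\delta:=\QQ[A]>0$, and fix $N\in\N$ with $\oo{N}\le\delta/2$. Given $n\in\N$, outer regularity of $\PP$ supplies an open set $U\supseteq A$ with $\PP[U]\le\oo{n}$, so that $\QQ[U]\ge\delta$. For $k\in\N$ put
\[ W_k:=\bigcup\bigl\{\,B(x,\oo{k}):x\in C,\ B(x,\oo{k})\subseteq U\,\bigr\}\subseteq U. \]
If $y\in U$ and $B(y,r)\subseteq U$, then for every $k$ with $\tfrac{2}{k}<r$ the density of $C$ gives $x\in C$ with $d(x,y)<\oo{k}$, whence $B(x,\oo{k})\subseteq B(y,\tfrac{2}{k})\subseteq U$ and $y\in B(x,\oo{k})\subseteq W_k$; thus $\ind{W_k}\to\ind{U}$ pointwise and, by dominated convergence, $\QQ[W_k]\to\QQ[U]\ge\delta$, so I may fix a $k$ with $\QQ[W_k]>\delta/2$. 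Since $C$ is countable, $W_k$ is a countable union of radius-$\oo{k}$ balls centered in $C$, so continuity from below lets me select a finite subunion $B_n:=\bigcup_{j=1}^{m}B(x_j,\oo{k})$ with $\QQ[B_n]>\delta/2\ge\oo{N}$; and $B_n\in\sB$ with $B_n\subseteq W_k\subseteq U$ forces $\PP[B_n]\le\oo{n}$. This produces, for every $n$, the $B_n$ demanded by \eqref{equ:nll}, with the single $N$ fixed above.

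The only point requiring a little care is the ``radius bookkeeping'': a member of $\sB$ is a \emph{finite} union of balls of a \emph{single} radius $\oo{k}$, so one cannot simply invoke that $U$ is a countable union of small balls of varying radii centered in $C$. The auxiliary sets $W_k$ are introduced precisely to isolate, for one suitably chosen $k$, enough $\QQ$-mass inside $U$ using radius-$\oo{k}$ balls only; since the $W_k$ need not be nested, I appeal to dominated convergence (through $\ind{W_k}\to\ind{U}$) rather than to monotone convergence. Everything else --- Borel--Cantelli, outer regularity, and the two continuity-of-measure arguments --- is routine.
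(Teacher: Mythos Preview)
Your proof is correct. The ``if'' direction via Borel--Cantelli is exactly what the paper dismisses as ``standard,'' and your argument fills in the details cleanly.

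For the ``only if'' direction you take a genuinely different route from the paper. The paper uses \emph{inner} regularity: it finds a compact $K$ with $\PP[K]=0$ and $\QQ[K]\ge 1/N$, then for each $n$ covers $K$ by finitely many radius-$1/n$ balls centered in $C$ (compactness gives finiteness for free), and observes that the resulting $B_n$ lies in the $1/n$-neighborhood of $K$, so $\PP[B_n]\to 0$. You instead use \emph{outer} regularity: for each $n$ you find an open $U\supseteq A$ with $\PP[U]\le 1/n$, and then approximate $U$ from inside by finite unions of equal-radius balls via your $W_k$ construction. The paper's approach is slightly more economical (compactness yields the finite cover in one stroke, and the same $K$ serves all $n$), whereas you rebuild the open set for each $n$ and need the pointwise-convergence argument to extract a single radius. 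On the other hand, your construction delivers $\PP[B_n]\le 1/n$ directly, while the paper only gets $\PP[B_n]\to 0$ and implicitly relabels. Both arguments rest on the same regularity theorem for finite Borel measures on Polish spaces, just applied from opposite sides.
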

\begin{proof}
  If \eqref{equ:nll} holds it is standard to show that
  $\QQ\nll\PP$.  Conversely, to show that $\QQ\nll\PP$ implies
  \eqref{equ:nll}, we use the
  fact that all finite measures on Borel sets of a Polish space are regular \cite[Theorem 12.7, p.~438]{AliBor99} to
  pick a compact set $K$ and a constant $N\in\N$, such that
  $\PP[K]=0$ and $\QQ[K]\geq 1/N$.  For $n\in\N$, let
  $\sB_n$ denote 
  the set of all open balls with centers in $C$ and radii $1/n$, and
  let the family $\seq{\sB^K}$ of families of open balls be defined by 
  \[ \sB_n^K= \sets{ B\in \sB_n}{ B\cap K\ne\emptyset},\efor n\in\N.\]
  By compactness, for each $n\in\N$ we can find a finite sub-collection $B^n_1,\dots, B^n_{m_n}$ in $\sB^n_K$ such that $K\subseteq B_n=\cup_{k=1}^{m_n} B^{n}_k$. Since $B_n\subseteq \sets{x\in X}{ d(x,K)\leq 1/n}$, we have $\PP[B_n]\to 0$, as $n\to\infty$. On the other hand $\QQ[B_n]\geq \QQ[K]\geq 1/N$, for all $n\in\N$. 
\end{proof}

\medskip

\new{The following statement can be derived as a direct consequence of
\cite[Theorem 58, p.~52]{DelMey82}. We include a short self-contained proof for
completeness.}
\begin{corollary}
  \label{cor:simBorel}
  The graph $\Gamma_{\sim}=\sets{(\PP,\QQ)\in\prob^2}{ \PP\sim\QQ }$ of the
  measure-equivalence relation $\sim$ is a 
  Borel subset of $\prob^2$. 
\end{corollary}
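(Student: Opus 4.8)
The plan is to read the statement off the combinatorial characterization of non-absolute-continuity in Lemma~\ref{lem:Gam-mes-3}, using only the elementary fact that for an \emph{open} set $B\subseteq\de$ the evaluation map $\prob\ni\PP\mapsto\PP[B]$ is lower semicontinuous (by the portmanteau theorem), hence Borel measurable. First I would reduce to showing that the set $D=\sets{(\PP,\QQ)\in\prob^2}{\QQ\nll\PP}$ is Borel: granting this, $\sets{(\PP,\QQ)}{\QQ\ll\PP}=\prob^2\setminus D$ is Borel, the coordinate swap of $\prob^2$ (a homeomorphism, hence Borel bimeasurable) carries $\prob^2\setminus D$ onto $\sets{(\PP,\QQ)}{\PP\ll\QQ}$, which is therefore Borel as well, and $\Gamma_{\sim}$ is the intersection of these two Borel sets.

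To prove that $D$ is Borel I would transcribe condition \eqref{equ:nll} directly into set operations, obtaining
\[
  D \;=\; \bigcup_{N\in\N}\;\bigcap_{n\in\N}\;\bigcup_{B\in\sB}
  \Bigl(\sets{\PP\in\prob}{\PP[B]\le\oo{n}}\times\sets{\QQ\in\prob}{\QQ[B]\ge\oo{N}}\Bigr),
\]
where the product is understood as a subset of $\prob^2$. Each $B\in\sB$ is open, being a finite union of open balls, so $\PP\mapsto\PP[B]$ is lower semicontinuous; hence $\sets{\PP}{\PP[B]\le\oo{n}}$ is closed and $\sets{\QQ}{\QQ[B]\ge\oo{N}}$ is Borel, so each rectangle is a Borel subset of $\prob^2$ (here one uses $\sB(\prob^2)=\sB(\prob)\otimes\sB(\prob)$, which holds since $\prob$ is Polish, hence second countable). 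As the index sets $\sB$ and $\N$ are countable, the displayed expression is a countable Boolean combination of Borel sets and is therefore Borel, as required.

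The whole argument is essentially bookkeeping once Lemma~\ref{lem:Gam-mes-3} is available; the only point that calls for any attention is the measurability of the evaluation maps $\PP\mapsto\PP[B]$, which is precisely why it is convenient that the family $\sB$ consists of open sets. I do not anticipate a genuine obstacle here.
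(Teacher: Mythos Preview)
Your proof is correct and follows essentially the same route as the paper's: both invoke Lemma~\ref{lem:Gam-mes-3} to reduce $\Gamma_{\sim}$ to a countable Boolean combination of sets of the form $\sets{(\PP,\QQ)}{\PP[B]\le\alpha}$ and $\sets{(\PP,\QQ)}{\QQ[B]\ge\beta}$, and then appeal to the Portmanteau theorem for the measurability of the building blocks. Your write-up is in fact slightly sharper: by observing that every $B\in\sB$ is open, you get Borel measurability of $\PP\mapsto\PP[B]$ directly from lower semicontinuity, whereas the paper additionally mentions a monotone class argument that is not actually needed here.
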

\begin{proof}
It suffices to notice that
Lemma \ref{lem:Gam-mes-3} states that we can express the graph 
  $\Gamma_{\sim}=\sets{(\PP,\QQ)}{ \QQ\sim\PP }$ of the relation
  $\sim$ using only Borel operations starting from the sets of the
  form 
  $\sets{(\PP,\QQ)\in\prob^2}{ \PP[A]\leq \alpha}$ and
  $\sets{(\PP,\QQ)\in\prob^2}{ \QQ[A]\geq \beta}$. That these subsets of
  $\prob^2$ are Borel measurable follows from a combination of the
  Portmanteau theorem and a monotone class argument.
\end{proof}

\medskip

The reader should compare
our next proposition to a related, independent, dis\-cre\-te-time result (namely Lemma 4.8) in
\cite{BouNut13}.
\pagebreak
\begin{proposition}\label{pro:analytic-super}
  The graph $\gm$ is an analytic subset of $E\times \prob$.
\end{proposition}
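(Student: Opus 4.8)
The plan is to express $\gm$ as the result of applying a countable sequence of Borel-measurable operations to Borel subsets of $E\times\prob$ and then to known analytic sets, and finally to take a projection, relying on the fact that analytic sets are closed under countable intersections, countable unions, Borel preimages and projections. The starting point is the reduction \eqref{equ:locallemma}, namely $\gm=\cap_n\Gamma_{\sM_n}$, which shows that it suffices to prove that each $\Gamma_{\sM_n}$ is analytic (in fact I expect each to be Borel). So fix $n\in\N$ and analyze $\Gamma_{\sM_n}=\sets{(x,\QQ)}{\QQ\sim\PP^x \text{ and } S^{\tau_n}\text{ is a }\QQ\text{-martingale}}$.

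First I would handle the martingale condition. By Lemma \ref{lem:Gam-mes-2}, applied to the bounded-from-below RCLL process $Y=S^{\tau_n}$ (bounded from below uniformly in $x$ by Assumption \ref{ass:uni-loc}), the property ``$S^{\tau_n}$ is a $\QQ$-martingale'' is equivalent to the countable family of identities $\EE^\QQ[S^{\tau_n}_r\ind{A_q^m}]=\EE^\QQ[S^{\tau_n}_q\ind{A_q^m}]$ over $q\le r$ in $Q_+$ and $m\in\N$. Each map $\QQ\mapsto\EE^\QQ[S^{\tau_n}_r\ind{A_q^m}]$ is Borel on $\prob$ — this is where one invokes, as in the proof of Corollary \ref{cor:simBorel}, the Portmanteau theorem together with a monotone-class argument, noting that $S^{\tau_n}_r$ is a bounded Borel function on $\de$ (it is the stopped coordinate, hence bounded by $n$ on $\set{\tau_n\le r}$ and controlled on the complement by the same uniform bound) and $\ind{A_q^m}$ is a bounded Borel function. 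Hence $\set{(x,\QQ):S^{\tau_n}\text{ is a }\QQ\text{-martingale}}$ is a countable intersection of sets of the form $\set{(x,\QQ):f(\QQ)=g(\QQ)}$ with $f,g$ Borel, so it is a Borel subset of $E\times\prob$ (it does not even involve $x$).

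Next I would handle equivalence. We need $\set{(x,\QQ):\QQ\sim\PP^x}$. By Assumption \ref{ass:Markov} the map $x\mapsto\PP^x$ is Borel from $E$ to $\prob$, hence $(x,\QQ)\mapsto(\PP^x,\QQ)$ is a Borel map $E\times\prob\to\prob^2$, and by Corollary \ref{cor:simBorel} the set $\Gamma_\sim\subseteq\prob^2$ is Borel; therefore its preimage $\set{(x,\QQ):\PP^x\sim\QQ}$ is Borel in $E\times\prob$. Intersecting with the martingale set gives that $\Gamma_{\sM_n}$ is Borel, hence analytic, for each $n$; then $\gm=\cap_n\Gamma_{\sM_n}$ is analytic (in fact Borel) as a countable intersection. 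The main obstacle, such as it is, is the bookkeeping in the first step: verifying that $\QQ\mapsto\EE^\QQ[S^{\tau_n}_r\ind{A}]$ is genuinely Borel for Borel (as opposed to merely continuous bounded) integrands, which is a routine monotone-class/functional-form argument starting from indicators of open sets where Portmanteau applies, but must be stated carefully because the weak topology on $\prob$ only directly controls integrals of bounded continuous functions.
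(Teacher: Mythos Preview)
Your outline is correct and tracks the paper's proof closely: reduce via \eqref{equ:locallemma} to the $\Gamma_{\sM_n}$, split each into a ``martingale'' piece handled by Lemma~\ref{lem:Gam-mes-2} and an ``equivalence'' piece handled by Corollary~\ref{cor:simBorel}, then intersect. The one genuine difference is in the equivalence step. You argue that $(x,\QQ)\mapsto(\PP^x,\QQ)$ is Borel and pull back the Borel set $\Gamma_\sim$, which gives $\Gamma_2=\{(x,\QQ):\QQ\sim\PP^x\}$ as a \emph{Borel} set directly. The paper instead builds the Borel set $\Lambda=\{(x,\PP^x,\QQ):\QQ\sim\PP^x\}\subseteq E\times\prob^2$ and projects onto the first and third coordinates, obtaining $\Gamma_2$ only as an analytic set. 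Your route is shorter and yields the stronger conclusion that $\gm$ is in fact Borel; the paper's projection detour is not needed here.

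Two small points to tighten. First, your parenthetical claim that $S^{\tau_n}_r$ is bounded is not correct in general: on $\{\tau_n\le r\}$ one has $S^{\tau_n}_r=S_{\tau_n}$, and a jump at $\tau_n$ can make this arbitrarily large. What Assumption~\ref{ass:uni-loc} gives you is only the uniform \emph{lower} bound $a_n$, which is all you need---for $f$ Borel and bounded below, $\QQ\mapsto\EE^\QQ[f]=\sup_N\EE^\QQ[f\wedge N]$ is still Borel by monotone convergence and your monotone-class argument for bounded integrands. Second, Lemma~\ref{lem:Gam-mes-2} only asserts the equivalence between the martingale property and the countable list of identities for $\QQ\in\sQ$ (Blumenthal's law is used in its proof), so strictly speaking your ``martingale set'' agrees with the Borel set defined by those identities only on $E\times\sQ$. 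The paper flags this explicitly; in your argument it is harmless because you intersect with $\Gamma_2\subseteq E\times\sQ$ anyway, but you should say so.
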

\begin{proof}
  By \eqref{equ:locallemma}, we need to prove that
  $\Gamma_{\sM_n}$ is analytic, for each $n\in\N$. We fix $n\in\N$ and
  observe that
   $\gmn = \Gamma_1\cap\Gamma_2$, where 
\begin{equation}\nonumber
 \begin{split}
   \Gamma_1=E\times \Bsets{\QQ\in \prob}{ S^{\tau_n}\text{ is a
   $\QQ$-martingale}} \eand \Gamma_2=\Bsets{(x,\QQ)\in E\times \prob}{\QQ \sim
   \PP^x}.
 \end{split}
\end{equation}
By Lemma \ref{lem:Gam-mes-2},  there exists a Borel set $\sR\subseteq \prob$
such that 
\[ \Bsets{\QQ\in \prob}{ S^{\tau_n}\text{ is a $\QQ$-martingale}} \cap \sQ
= \sR \cap \sQ.\]
Since $\Gamma_2\subseteq E\times \sQ$, it will be enough to show that
$\Gamma_2$ is analytic. 

  It is a part of the definition of a canonical $\sT$-Markov family
  that the map 
  $x\mapsto \PP^x\in\prob$ is Borel. Therefore, so are its graph
  $\Gamma_{\PP}=\sets{(x,\PP^x)}{ x\in E}\subseteq E\times\prob$ and 
  the product $\Lambda_1 = \Gamma_{\PP} \times \prob$.  
  By Corollary \ref{cor:simBorel},  $\Lambda_2 = E \times
  \Gamma_{\sim}$ is Borel, and, hence, so is 
  \[ \Lambda= \Lambda_1\cap \Lambda_2 = \sets{ (x,\PP^x,\QQ)}{
  \QQ\sim\PP^x}.\]
  It remains to observe that $\Gamma_2$ is the (canonical) projection
  of $\Lambda$ onto the first and third coordinates and conclude that
  it is an analytic set in $E\times \prob$. 
\end{proof}

\medskip

\paragraph{Closedness under concatenation and disintegrability}
We remind the reader that $\sS(\sM)$ denotes the set of all
(universally measurable) kernels, and that each $\nu\in\sS(\sM)$ can
be interpreted as a universally-measurable map $x \mapsto \cup_{x\in
E} \sM^x$ with $\nu_x\in\sM^x$ for all $x\in E$.  
\begin{proposition}
  \label{pro:concat}
For all $(x,\QQ)\in\gm$, $\tau\in \sT$ and $\nu\in\sS(\sP)$, we have
$$\QQ\cct\nu\in\sM^x.$$
\end{proposition}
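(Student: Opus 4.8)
The plan is to show that $\QQ \cct \nu$ is equivalent to $\PP^x$ and that $S$ is a $(\QQ\cct\nu, \FFF^0)$-local martingale, using Lemma \ref{lem:Gam-mes-1} to reduce the latter to checking the true-martingale property of each $S^{\tau_n}$. Throughout, fix $(x,\QQ)\in\gm$, $\tau\in\sT$ and $\nu\in\sS(\sM)$.

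\textbf{Equivalence $\QQ\cct\nu\sim\PP^x$.} First I would verify $\QQ\cct\nu\sim\PP^x$. Because $\QQ\sim\PP^x$ and the map $(\omega,\omega')\mapsto\omega\cct\omega'$ is Borel, it suffices to compare $\QQ\cct\nu$ with $\PP^x\cct\PP^{\cdot}$, where the kernel $x'\mapsto\PP^{x'}$ is a $\sP$-selector for the singleton family $(\set{\PP^{x'}})_{x'\in E}$; by the strong Markov property (Assumption \ref{ass:Markov}) this concatenation is just $\PP^x$ again. For a Borel set $A$ with $\PP^x[A]=0$, writing things out via \eqref{equ:cct-G} with $G=\ind{A}$, one gets that $\EE^{\PP^{X_\tau(\omega)}}[\ind{A}(\omega\cct\cdot)]=0$ for $\PP^x$-a.e.\ $\omega$; since $\nu_{X_\tau(\omega)}\sim\PP^{X_\tau(\omega)}$ for every $\omega$ (as $\nu_{x'}\in\sM^{x'}$ means $\nu_{x'}\sim\PP^{x'}$), the same conditional expectation against $\nu_{X_\tau(\omega)}$ also vanishes $\QQ$-a.s.\ — here I use $\QQ\sim\PP^x$ to transfer the a.s.\ statement — so $(\QQ\cct\nu)[A]=0$. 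The reverse implication is symmetric, using that $\QQ\sim\PP^x$ and $\nu_{x'}\sim\PP^{x'}$ both have equivalence in both directions.

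\textbf{Martingale property of $S^{\tau_n}$ under $\QQ\cct\nu$.} Fix $n$. By Lemma \ref{lem:Gam-mes-2} it is enough to check the countable family of identities $\EE^{\QQ\cct\nu}[S^{\tau_n}_r\ind{A_q^n}]=\EE^{\QQ\cct\nu}[S^{\tau_n}_q\ind{A_q^n}]$ for rational $q\le r$. The key structural point is that $\tau\in\sT$ is bounded (w.l.o.g., as noted in the section) and $\tau_n$ is a hitting time: on $\set{\tau_n\le\tau}$ the stopped process $S^{\tau_n}$ is already determined before time $\tau$, while on $\set{\tau_n>\tau}$ the increment of $S^{\tau_n}$ after $\tau$ is governed, under $\QQ\cct\nu$, by the measure $\nu_{X_\tau}$ shifted so that it starts at $X_\tau$. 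Concretely, using \eqref{equ:compose} and the fact that concatenation does not alter the path before $\tau$, I would decompose $S^{\tau_n}_r = S^{\tau_n\wedge\tau}_r + (S^{\tau_n}_r - S^{\tau_n\wedge\tau}_r)$ and push each piece through \eqref{equ:cct-G}: the pre-$\tau$ piece reduces to a martingale identity under $\QQ$ (since $\QQ\in\sM^x_n$, so $S^{\tau_n}$ is a true $\QQ$-martingale), and the post-$\tau$ piece, conditionally on $X_\tau$, reduces to a stopped-martingale identity for $\nu_{X_\tau}\in\sM^{X_\tau}_n$, which again holds because $S^{\tau_n}$ is a true $\nu_{X_\tau}$-martingale. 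Integrability of $S^{\tau_n}_r\inds{\tau_n<\infty}$ under the relevant measures is exactly the bound from the proof of Lemma \ref{lem:Gam-mes-1}, combined with Assumption \ref{ass:uni-loc}; one has to keep careful track of the additive shift $X_\tau(\omega)-X_0(\omega')$ appearing in \eqref{equ:compose}, which is bounded by $n$ on $\set{\tau_n>\tau}$ and so preserves the uniform lower bound $a_n$. Finally, $\tau_n\uparrow\infty$ pathwise, so Lemma \ref{lem:Gam-mes-1} applies and yields that $S$ is a $(\QQ\cct\nu,\FFF^0)$-local martingale, hence $\QQ\cct\nu\in\sM^x$.

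\textbf{Main obstacle.} The delicate step is the bookkeeping in the post-$\tau$ decomposition: one must verify that on the event $\set{\tau<\tau_n}$ the stopping time $\tau_n$ "seen from $\tau$" for the concatenated path corresponds, after the shift by $X_\tau(\omega)-X_0(\omega')$, to the hitting time $\inf\sets{t\ge0}{|S_t - (X_\tau(\omega)-X_0(\omega'))|\ge n}$ for the path $\omega'$, which is a stopping time for which $\nu_{X_\tau(\omega)}$ controls the martingale property only after re-centering; reconciling this shifted hitting time with the true-martingale property of $S^{\tau_n}$ under $\nu_{X_\tau(\omega)}$ requires that $|X_\tau(\omega)|<n$ on $\set{\tau<\tau_n}$ (so the re-centered ball still contains the origin), which is precisely why $\tau_n$ is defined by $|S_t|\ge n$ and why the uniform constant $a_n$ in Assumption \ref{ass:uni-loc} is what makes the argument go through. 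Everything else is routine measure-theoretic manipulation of \eqref{equ:cct-G}.
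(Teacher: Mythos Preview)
Your overall strategy is the same as the paper's: reduce to true martingality of each $S^{\tau_n}$ via Lemma~\ref{lem:Gam-mes-1}, then split into a ``before~$\tau$'' piece (governed by $\QQ$, since $\QQ$ and $\QQ\cct\nu$ agree on $\sF_\tau$) and an ``after~$\tau$'' piece (governed by $\nu_{X_\tau}$). The paper phrases the latter as $\EE^{\QQ\cct\nu}[S_{\tau+t}\mid\sF_{\tau+s}]=S_{\tau+s}$ and then glues the two halves with an optional-sampling computation; your use of the countable test family from Lemma~\ref{lem:Gam-mes-2} is a harmless variant.

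The one point where you go astray is the ``main obstacle'' paragraph. The additive shift $X_\tau(\omega)-X_0(\omega')$ in \eqref{equ:compose} is not merely bounded by~$n$ on $\{\tau<\tau_n\}$ --- it is identically zero, $\nu_{X_\tau(\omega)}$-a.s. Indeed, $\nu_{x'}\in\sM^{x'}$ forces $\nu_{x'}\sim\PP^{x'}$, and $\PP^{x'}[X_0=x']=1$ by Assumption~\ref{ass:Markov}; hence $X_0(\omega')=X_\tau(\omega)$ for $\nu_{X_\tau(\omega)}$-almost every $\omega'$. With this in hand there is no re-centering to worry about: on $\{\tau(\omega)<\tau_n(\omega)\}$ one has $|S_\tau(\omega)|<n$, and the concatenated path after $\tau$ is literally $\omega'(\cdot-\tau(\omega))$, so $\tau_n(\omega\cct\omega')=\tau(\omega)+\tau_n(\omega')$ and $S^{\tau_n}_{\tau+t}(\omega\cct\omega')=S^{\tau_n}_t(\omega')$. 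The martingale property of $S^{\tau_n}$ under $\nu_{X_\tau(\omega)}$ then applies directly, without any shifted hitting time. This is exactly the identity $S_r(\omega')=S_{\tau+r}(\omega\cct\omega')$ that the paper invokes. Once you drop the phantom shift, the rest of your argument goes through cleanly.
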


 \emph{Proof.} Thanks to Assumption \ref{ass:uni-loc} 
 and by stopping at some $\tau_n$ (as in \eqref{equ:reduce}), we can 
 assume that $S$ is bounded from below and a (true) martingale
 under each $\QQ\in\cup_x \sM^x$. Also, by considering each
 component separately, we may assume that $S$ is one-dimensional,
 i.e., that $d=1$.

We fix $(x,\QQ), \tau$ and $\nu$ as in the statement, and note that,
by direct computation, 
$\qcn\sim\PP^x$. 
Then, we pick a $0\leq s\leq t$ and a bounded random variable $F\in\sF_{\tau+s}$ and observe
that, for all $\omega\in\de$, the random variable $\omega'\mapsto
F(\omega\cct\omega')$ is $\sF_s$-measurable. Therefore, by
the $\bF$-martingale property of $S$ under $\nu_x$, we have 
 \[ \int S_{t}(\omega') F(\omega\cct\omega')\, \nu_x(d\new{\omega'})=
\int S_s(\omega') F(\omega\cct\new{\omega'})\, \nu_x(d\new{\omega'}),\]
for all $\omega\in\de$ and $x\in E$.
The identity $S_r(\omega')=
S_{\tau+r}(\omega\cct\omega')$, 
valid for $\nu_{X_{\tau}(\omega)}$-almost all $\omega'$ 
used with $r=s$ and $r=t$ 
implies that $\EE^{\qcn}[ S_{\tau+t}
F]=\EE^{\qcn}[ S_{\tau+s} F]$. So, 
\begin{equation}
  \nonumber 
  \begin{split}
	\EE^{\qcn}[ S_{\tau+t}|\FF_{\tau+s}] = S_{\tau+s},
	\text{ $\qcn$-a.s., for all $0\leq s \leq t$.}
 \end{split}
\end{equation}
In words, $S$ is a $\qcn$-martingale ``after $\tau$''. On the other hand,
$\QQ$ and $\qcn$ coincide on $\sF_{\tau}$, which implies immediately
that $S$ is a $\qcn$-martingale ``before $\tau$''. Thanks to the
optional sampling theorem, the two properties combine, and we conclude
that $S$ is a $\QQ\cct\nu$-martingale. Indeed, for a bounded
stopping time $\kappa$ we have the following chain of inequalities, where
all the expectations are under $\qcn$:
\begin{equation}
\nonumber
\begin{split}
  \EE[ S_{\kappa}] 
  &= \EE[S_{\kappa\wedge\tau}\inds{\kappa\leq \tau} + 
      S_{\kappa\vee \tau} \inds{\kappa>\tau}]
	  = \EE[ \EE[S_{\tau}|\FF_{\kappa\wedge\tau}] \inds{\kappa\leq\tau}
	  +\EE[ S_{\kappa\vee \tau} \inds{\kappa>\tau}|\FF_{\tau}]]\\
	  &= \EE[ S_{\tau}\inds{\kappa\leq\tau}] +
	  \EE[S_{\tau} \inds{\kappa>\tau}] = \EE[S_{\tau}].\qquad\endproof
 \end{split}
\end{equation}

\begin{proposition}
  \label{pro:disint}
  The family $\smxe$ satisfies the condition \eqref{equ:(b)} of
  Proposition \ref{pro:sufficient}.
\end{proposition}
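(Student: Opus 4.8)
The plan is to derive \eqref{equ:(b)} for the family $\smxe$ from the analogous statement with the \emph{full} past $\sF_\tau$ in place of $X_\tau$, which behaves better under the concatenation machinery.  Fix $x_0\in E$, $\QQ\in\sM^{x_0}$ and $\tau\in\sT$ (bounded, by the standing reduction), and let $\QQ^\tau_\omega:=\QQ[\theta_\tau\in\cdot\,|\sF_\tau](\omega)$ be a regular conditional distribution (it exists since $\de$ is Polish).  First I would record that, by the tower property, $\QQ[\theta_\tau\in\cdot\,|X_\tau=x]$ is, for $\QQ\circ X_\tau^{-1}$-a.e.\ $x$, the mixture $\int \QQ^\tau_\omega\,\QQ[d\omega\,|X_\tau=x]$, and that $\sM^x$ is stable under such mixtures: equivalence to $\PP^x$ is clearly preserved, while the true-martingale property of each $S^{\tau_n}$ -- which by Lemma \ref{lem:Gam-mes-1} characterizes membership in $\sM^x$ -- passes to the mixture thanks to the uniform lower bound $S^{\tau_n}\geq a_n$ of Assumption \ref{ass:uni-loc}, which bounds the relevant expectations and legitimizes Fubini.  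It therefore suffices to show
\[ \QQ^\tau_\omega\in\sM^{X_\tau(\omega)}\qquad\text{for }\QQ\text{-a.e.\ }\omega,\]
and I would verify the two defining properties -- equivalence and the local-martingale property -- separately.

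For the equivalence $\QQ^\tau_\omega\sim\PP^{X_\tau(\omega)}$, I would first use the strong Markov property \eqref{equ:Marko}, applied with $Z=\ind{B}$ and combined with Borel measurability of $x\mapsto\PP^x[B]$, to identify the regular conditional distribution of $\theta_\tau$ given $\sF_\tau$ under $\PP^{x_0}$ with $\PP^{X_\tau(\cdot)}$ (running over a countable generating $\pi$-system of $\bde$).  Then, writing $Z:=\tRN{\QQ}{\PP^{x_0}}$ (strictly positive $\PP^{x_0}$-a.s.), invoking the joint measurability of $(\omega,\omega')\mapsto\omega\cct\omega'$ and the identity $\theta_\tau(\omega\cct\omega')=\omega'$, valid $\PP^{X_\tau(\omega)}(d\omega')$-a.s.\ because $\PP^x[X_0=x]=1$, a conditional Bayes computation exhibits $\QQ^\tau_\omega$ as $\PP^{X_\tau(\omega)}$ reweighted by the normalized density $\omega'\mapsto Z(\omega\cct\omega')$.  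Positivity of $Z$ -- more precisely, $\PP^{x_0}[Z=0]=0$ forcing $\PP^{X_\tau(\omega)}[\{\omega':Z(\omega\cct\omega')=0\}]=0$ for $\PP^{x_0}$-a.e.\ $\omega$ -- then makes this density strictly positive $\PP^{X_\tau(\omega)}$-a.s., yielding the equivalence for $\QQ$-a.e.\ $\omega$.

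For the local-martingale property of $S$ under $\QQ^\tau_\omega$, Lemma \ref{lem:Gam-mes-1} reduces matters to showing that $S^{\tau_n}$ is a $\QQ^\tau_\omega$-martingale for each $n$ and $\QQ$-a.e.\ $\omega$.  Here I would fix $n$, note that $S^{\tau_n}$ is a bounded-below (by $a_n$, Assumption \ref{ass:uni-loc}) true $\QQ$-martingale, so optional sampling gives $\EE^{\QQ}[S^{\tau_n}_{\tau+t}\,|\sF_{\tau+s}]=S^{\tau_n}_{\tau+s}$, and exploit the path identities $S^{\tau_n}_{\tau+r}=S^{\tau_n}_r\circ\theta_\tau$ on the $\sF_\tau$-set $\{\tau_n\geq\tau\}$ (where $\tau_n=\tau+\tau_n\circ\theta_\tau$) and ``$S^{\tau_n}_{\tau+\cdot}$ frozen and $\sF_\tau$-measurable'' on $\{\tau_n<\tau\}$.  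Testing the martingale identity against $\ind{D}\cdot(\ind{A}\circ\theta_\tau)$ with $D\in\sF_\tau$ arbitrary and $A$ in a countable generating $\pi$-system of $\sF^0_s$, and then conditioning on $\sF_\tau$, would yield that for $\QQ$-a.e.\ $\omega$ with $\tau_n(\omega)\geq\tau(\omega)$ the process $S^{\tau_n}$ is a $\QQ^\tau_\omega$-martingale ($\lone(\QQ^\tau_\omega)$-integrability coming once more from $S^{\tau_n}\geq a_n$).  Finally, since every RCLL path is bounded on compacts, $\tau_m(\omega)\uparrow\infty$ for every $\omega$, so -- $\tau$ being bounded -- each $\omega$ eventually lies in $\{\tau_m\geq\tau\}$; hence $S$ is a $\QQ^\tau_\omega$-local martingale, and together with the equivalence this gives $\QQ^\tau_\omega\in\sM^{X_\tau(\omega)}$.

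The step I expect to be the main obstacle is precisely the interplay between localization and the shift: the reducing times $\tau_n$ are hitting times of $S$ and do not split cleanly across $\tau$, so on $\{\tau_n<\tau\}$ the naive ``$S$ is a martingale after $\tau$'' argument fails.  The resolution -- and the reason Assumption \ref{ass:uni-loc} is phrased uniformly in $x$ -- is to argue at the level of $S^{\tau_n}$, which under every relevant measure is globally bounded below and a genuine martingale, exactly as in the proof of Proposition \ref{pro:concat}; the exceptional set $\{\tau_n<\tau\}$ is then harmless because $\tau_m\uparrow\infty$ eventually covers every path.
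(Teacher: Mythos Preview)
Your argument is correct, but it is organized differently from the paper's.  The paper conditions \emph{directly} on $\sigma(X_\tau)$ rather than on $\sF_\tau$: for an arbitrary $\sigma\in\sT$ it sets $\kappa=\tau+\sigma\circ\theta_\tau$, checks $\kappa\in\sT$ via Galmarino's test, and uses optional sampling to get $\EE^{\QQ}[S_\kappa\,|\,\sigma(X_\tau)]=S_\tau$ (the right-hand side being $\sigma(X_\tau)$-measurable because $S$ is a coordinate of $X$).  Since $S_\kappa=S_\sigma\circ\theta_\tau$, this yields $\int S_\sigma\,dq_x=\int S_0\,dq_x$ for $\QQ\circ X_\tau^{-1}$-a.e.\ $x$, and a countable family of $\sigma$'s drawn from Lemma \ref{lem:Gam-mes-2} finishes the martingale verification in one stroke; the equivalence $q_x\sim\PP^x$ is disposed of in a line.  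Your route instead passes through $\QQ^\tau_\omega=\QQ[\theta_\tau\in\cdot\,|\,\sF_\tau]$, proves $\QQ^\tau_\omega\in\sM^{X_\tau(\omega)}$, and then needs the extra mixture-stability step to descend to $\sigma(X_\tau)$.  What your approach buys is a more explicit Bayes computation for the equivalence and the avoidance of Galmarino; what the paper's buys is brevity---the detour through $\sF_\tau$ and the convexity of $\sM^x$ are simply unnecessary once one notices that $S_\tau$ is already $\sigma(X_\tau)$-measurable.  Your handling of the localization (arguing only on $\{\tau_n\geq\tau\}$ and then letting $n\to\infty$, with smaller $n$'s recovered by stopping) is sound and matches the spirit of the paper's reduction at the start of the proof.
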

\begin{proof}
  We pick $(x,\QQ)\in\gm$, $\tau\in\sT$, and recall that $\tau$ is bounded. As in the proof of
  Proposition \ref{pro:concat}, we assume that 
  $S$ is a one-dimensional $\QQ$-martingale bounded from below,
  for each $\QQ\in\cup_{x\in E} \sM^x$.
 
  Let $q_x$ be the $\QQ$-regular conditional
  distribution of $\theta_{\tau}$, given $X_{\tau}=x$. 
  By constructing the Radon-Nikodym density, for example, we immediately
  note that $q_x\sim \PP^x$, for $\QQ\circ X_{\tau}^{-1}$-almost all $x\in E$. 
  Given a stopping time $\sigma\in\sT$, we set
  $\kappa=\tau+\sigma\circ\theta_{\tau}$. Since both $\tau$ and
  $\sigma$ are stopping times for the shifted raw filtration
  $\prfi{\sF^0_{t+\eps}}$, for each $\eps>0$, Galmarino's test (see,
  e.g., Exercise 4.21, p.~47 in \cite{RevYor99}), implies that
  $\kappa$ has the same property, and so,  $\kappa\in \sT$. 
  Therefore, $\EE^{\QQ}[S_{\kappa}|\sigma(X_{\tau})] = S_{\tau}$, $\QQ$-a.s. Moreover, 
$S_{\kappa} =
S_{\sigma}\circ\theta_{\tau}$, and so, 
\[  \EE^{\QQ}[ f(X_{\tau}) S_{\tau}]= 
  \EE^{\QQ}[ f(X_{\tau})S_{\kappa}] = \EE^{\QQ}[ 
f(X_{\tau}) \textstyle \int S_{\sigma}(\omega')\, q_{X_{\tau}}(d\omega')],\]
for all bounded Borel functions $f:E\to\R$. In particular, it follows
that
\[ \int S_0(\omega') q_{X_{\tau}(\omega)}(d\omega') = S_{\tau}(\omega) = \int
  S_{\sigma}(\omega') q_{X_{\tau}(\omega)}(d\omega'),\text{
for $\QQ$-almost all $\omega\in\de$.}\]
Therefore $S_{\sigma}$ and $S_0$ have the same expectation under
$q_x$, for $\QQ\circ X_{\tau}^{-1}$-almost all $x$, with the
exceptional set possibly depending on the stopping time $\sigma$.
By Lemma \ref{lem:Gam-mes-2}, however, it suffices to consider a
countable collection of stopping times of the form $\sigma =
q\ind{A^n_q}+r \ind{(A^n_q)^c}$, for rational $q<r$ and $A^n_q$ as in
the statement of Lemma \ref{lem:Gam-mes-2} 
we conclude that $S$ is 
$q_x$-martingale for $\QQ\circ X_{\tau}^{-1}$-almost all $x\in E$. 
\end{proof}

\subsection{Lower hedging}\ 
\label{sse:lower-hedging}
We adopt the framework and notation of subsection \ref{sse:finmar}
and turn to the problem of lower hedging (sub-hedging) in a financial
market. \new{First, we define a random variable $G$  - with a pointwise 
shift-invariance property - which will play a role
of the contingent claim to be sub-replicated,  The main ingredient is the
following simple observation which states, loosely, that our state space
admits a ``limit superior'' (a weaker, but measurable, notion similar to
that of a Banach
limit).
\begin{lemma}
\label{lem:2DE2} 
There exists a measurable functional $L:\de\to E$ such that
\begin{enumerate}
\item $L(\omega) = \lim_{t\to\infty} X_t(\omega)$, whenever the limit
exists, and
\item $L( \theta_t(\omega)) = L(\omega)$, for all $\omega\in \de$ and all
$t\geq 0$. 
\end{enumerate}
\end{lemma}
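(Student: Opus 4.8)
The plan is to build $L$ by a coordinate-wise construction on $E = [0,\infty)\times\R^d\times F$, handling the (trivial) time component, the Euclidean component, and the LCCB-space component separately, and then showing that the naive ``$\limsup$''-type construction on each is measurable and shift-invariant. First I would dispose of the $T$-component: by Assumption \ref{ass:fin-hor}, $T_t = (T_0-t)^+$ eventually equals $0$ along every path, so the limit of $T_t(\omega)$ exists and is $0$ for every $\omega\in\de$; define the first coordinate of $L$ to be the constant $0$. This is trivially Borel and shift-invariant.

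Next I would treat the $\R^d$-component $S$. Here the limit need not exist, so I would mimic a $\limsup$: for each coordinate $i$ set $\ell^i(\omega) = \lim_{n\to\infty}\sup_{t\ge n} S^i_t(\omega)$ if this is finite, and an arbitrary fixed value (say $0$) otherwise --- or, more carefully, first pass to $\liminf_{n}\sup_{t\ge n} S^i_t$, which always exists in $[-\infty,\infty]$, and then truncate into $\R$ by some fixed measurable retraction $r:[-\infty,\infty]\to\R$ that is the identity on $\R$. Measurability follows because $\sup_{t\ge n} S^i_t = \sup_{q\in\Q,\,q\ge n} S^i_q$ by right-continuity of the RCLL path (so it is a countable supremum of coordinate maps, hence Borel), and then $\liminf_n$ of a sequence of Borel functions is Borel. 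Shift-invariance is immediate: $\sup_{t\ge n} S^i_t(\theta_s\omega) = \sup_{t\ge n} S^i_{t+s}(\omega) = \sup_{t\ge n+s} S^i_t(\omega)$, and the tail $\liminf_n$ is unchanged by replacing $n$ by $n+s$; moreover, when $\lim_t S^i_t(\omega)$ exists in $\R$, the construction returns exactly that value. The $F$-component requires a small extra step because $F$ has no linear structure: I would fix a countable base $\{U_k\}$ of the LCCB space $F$ and, using the one-point compactification $F_\infty = F\cup\{\infty\}$ (compact metrizable, hence Polish), define a $\limsup$-type cluster point via, e.g., the smallest cluster point in a fixed metrization of $F_\infty$; this is a measurable function of $\omega$ by the usual expression of cluster sets through countable intersections/unions over rational times, and it is shift-invariant by the same tail argument. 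When $\eta_t(\omega)$ converges in $F$, the cluster set is a single point and $L$ returns it; when the retraction would land on $\infty\notin F$, I assign a fixed point of $F$.

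The one genuine subtlety --- and the step I expect to be the main obstacle --- is reconciling the two requirements \emph{simultaneously} on paths where the limit exists only in some coordinates, or where a coordinate limit exists but lies ``at infinity'' of $F$ (or is $\pm\infty$ for $S$). Condition (1) demands that $L(\omega) = \lim_t X_t(\omega)$ \emph{whenever the (joint) limit in $E$ exists}, which is the weakest possible demand, so any measurable choice that is correct on that set and shift-invariant everywhere else works; the care is in making the ``else'' branch a genuine element of $E$ (not of a compactification) while preserving Borel measurability of the branch selection. I would handle this by writing $L = \Phi(\ell_T,\ell_S,\ell_\eta)$ where each $\ell$ is the coordinate $\liminf$-of-sup (resp.\ cluster-point) map into the appropriate compactified space, and $\Phi$ is a fixed Borel retraction of $[0,\infty]\times[-\infty,\infty]^d\times F_\infty$ onto $E$ that is the identity on $E$; measurability of $\Phi$ is elementary, and shift-invariance of $L$ follows from shift-invariance of each $\ell$.

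For the writeup I would phrase it cleanly as: \emph{define, for $\omega\in\de$ and $q\in\Q_+$, the countable family of coordinate evaluations; set $\ell_S^i(\omega)=\liminf_{n}\sup_{q\ge n}S^i_q(\omega)$ and similarly for the factor component via a fixed metrization of the one-point compactification; verify Borel measurability as a countable lattice operation on coordinate maps; verify $\ell\circ\theta_t=\ell$ directly; and retract into $E$.} The honest remark to include is that property (1) is used only in the weak form where the full $E$-valued limit exists, which is all the later applications (finite-horizon / running-cost type payoffs) need, as the paragraph following Proposition \ref{pro:sufficient} already anticipates.
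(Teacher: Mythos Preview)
Your overall strategy---build a shift-invariant, measurable ``$\limsup$''-type functional and retract into $E$---is the right idea and matches the paper's in spirit. There is, however, one genuine error: you handle the $T$-component by invoking Assumption~\ref{ass:fin-hor} to conclude that $T_t(\omega)\to 0$ for \emph{every} path. That assumption is a $\PP^x$-a.s.\ statement; Lemma~\ref{lem:2DE2} is a pathwise fact about $\de$ and must hold for arbitrary $\omega$. A constant path $\omega$ with $T_t(\omega)\equiv 3$ lies in $\de$, has $\lim_t X_t(\omega)$ existing, yet your $L$ would assign first coordinate $0\neq 3$, violating condition~(1). The fix is trivial---treat $[0,\infty)$ exactly as you treat the $\R^d$-coordinates---but as written the proof is incorrect. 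A second, smaller issue: ``the smallest cluster point in a fixed metrization of $F_\infty$'' is not a well-defined selection without further work (there is no canonical total order on a metric space); you would need to spell out a genuine measurable selection, e.g.\ via a lexicographic minimization against a fixed dense sequence, or invoke Kuratowski--Ryll-Nardzewski.

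The paper avoids both complications by treating all coordinates uniformly: it takes the one-point compactification of the \emph{entire} space $E$ (which is Polish since $E$ is LCCB), embeds it into the Hilbert cube $[0,1]^{\N}$, and defines $L'$ by the coordinatewise $\limsup$ in the Hilbert cube. Measurability and shift-invariance are then immediate from the corresponding properties of real-valued $\limsup$'s, and a single Borel retraction from the Hilbert cube back to $E$ finishes. Your factor-by-factor version buys nothing and costs you the ad hoc handling of $F$; once the $T$-error is corrected, your argument is essentially a less economical variant of the paper's.
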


\smallskip

\begin{proof}
$E$ is a metrizable noncompact LCCB  space, so its Alexandroff (one-point)
compactification is Polish (see 
\cite[Theorem 3.44, p.~92]{AliBor99}). Therefore, it can be embedded into
the Hilbert cube $H=[0,1]^{\N}$ as a compact set. In fact, we identify $E$ with its
copy in $H$, and we denote the image of the ``infinity
point'' by $\infty$. Let $p_n$, $n\in\N$, denote the coordinate projections on $H$.
For $\omega\in \de$ 
we define $L'(\omega)\in H$ by requiring that
\[ p_n(L'(\omega)) = \limsup_{t\to\infty} p_n(X_t(\omega)),\text{ for all
}n\in\N,\]
so that, immediately, we have $L'(\theta_t(\omega))=L(\omega)$, for all
$t\geq 0$ and $\omega\in\de$. 
The continuity of each $p_n$ and the RCLL property of $X_t$ guarantee the Borel
measurability of the map $\omega\mapsto \limsup_{t\to\infty}
p_n(X_t(\omega))$, for each $n\in\N$, and, therefore, of the map $L'$.
Thanks to the compactness of $E\cup \set{\infty}$, a RCLL path $t\mapsto
X_t$ in $E$ will converge to $x\in E\cup\set{\infty}$ if and only if each of
the $[0,1]$-valued paths $p_n(X_t)$ converges to $p_n(x)$ as $t\to\infty$.
It follows that $L'(\omega) = \lim_{t\to\infty} X_t(\omega)$, whenever this
limit exists and that $L'(\omega)\in E\cup \set{\infty}$ otherwise. It
remains to remap $\infty$ to an arbitrary point in $E$, i.e., to define the
map $L$ by 
$L=\kappa \circ L'$, where $\kappa$ is a measurable ``retraction''
$\kappa:E\cup\set{\infty}\to E$.
\end{proof}}

\smallskip

\new{
  With $L$ as in Lemma \ref{lem:2DE2} above and a Borel
function $\vp:E\to \R$, we set
\[ G(\omega) = \tilde{\vp}(L(\omega)),\]
where $\tilde{\vp}(t,S,\eta) = \vp(S,\eta)$, for all $(t,S,\eta)\in E$. 
Since under each $\PP^x$, the coordinate process
$X=(T,S,\eta)$ gets absorbed
after $T_0$ units of time, and the absorption point is 
$L(\omega)$, $\PP^x$-a.s., it is clear that
$G$ can be interpreted as the value of a derivative claim written on the risky
asset $S$ as well as the factor $\eta$, with maturity $T_0$.
With the standing interpretation of the first coordinate process
$T$ as ``time to go'', $G$ should be understood $\PP^x$-a.s.~as
a composition of a Borel
function (the payoff function) with the values of the risky asset $S$
and the factor $\eta$ at maturity $T_0$ units of time from now.
Moreover, we have $G\circ \theta_t = G$, for all $t$; indeed, this property
is directly inherited from $L$. }

Under the minimal assumption that $\EE^{\QQ}[ G^-]<\infty$, for all
$\QQ\in\cup_x \sM^x$, for a given 
$x=(T,S,\eta)\in [0,\infty)\times \R^d\times F$, we define
  the \define{lower-hedging price} of $G$ as
\begin{equation}
  \label{equ:superhedging}
  \begin{split}
	v(T,S,\eta) = \inf_{\QQ\in\sM^{T,S,\eta}} \EE^{\QQ}[G].
 \end{split}
\end{equation}

\medskip

  One usually (and more naturally) defines the lower hedging 
  price $v(x)=v(t,S,\eta)$ as
  follows:
  \[ v(T,S,\eta)= \sup \Bsets{ y\in\R}{ \exists H\in \AA^{T,S,\eta},\
  y+\int_0^{T} H_u\, dS_u \geq \vp(S_T, \eta_T),\ \PP^{T,S,\eta}\text{ -  a.s}},\]
  where $\AA^{T,S,\eta}$ denotes the set of all admissible strategies. The two
formulations are equivalent (see \cite[Theorem 5.12, p.~246]{DelSch98}), 
but we
prefer the one in \eqref{equ:superhedging} as it fits our framework
much better.

\smallskip

\begin{theorem}[DPP for lower hedging] 
  \label{thm:lower-hedging}
  Under Assumptions \ref{ass:Markov}, \ref{ass:fin-hor},
  \ref{ass:NFLVR} and \ref{ass:uni-loc}, and with $v$ defined in
  \eqref{equ:superhedging}, we have 
  \begin{enumerate}
	\item The value function $v:[0,\infty)\times \R^d \times F\to
  [-\infty,\infty]$ is universally measurable and the
  dynamic-programming equation (DPP) holds:
  \[ v(T,S,\eta) = \inf_{\QQ\in\sM^{T,S,\eta}} \EE^{\QQ}[ v(T-\tau,
S_{\tau}, \eta_{\tau})],\quad v(0,S,\eta) = \vp(\eta,S),\]
for all $\FFF$-stopping times $\tau\leq T$, where we define $\EE^{\QQ}[Y]=\infty$, as
soon as $\EE^{\QQ}[Y^+]=\infty$. 
\item For $\eps>0$, an
$\eps$-optimal $\hat{\QQ}^{T,S,\eta}\in\sM^{T,S,\eta}$ 
can be associated to each $(T,S,\eta)\in [0,\infty)\times \R^d \times E$ in a
universally-measurable way. 
\end{enumerate}
\end{theorem}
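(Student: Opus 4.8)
The plan is to recognize $(\sM,\sT,G)$ as a Markov control problem in the sense of Section \ref{sec:model} and then to read off the three conclusions from the abstract Theorem \ref{thm:main}, after which only a short translation into the concrete finite-horizon notation remains.

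First I would verify that $(\sM,\sT)$ is a controlled Markov family (Definition \ref{def:conMarfam}): analyticity of the graph $\gm$ is exactly Proposition \ref{pro:analytic-super}, and closure under concatenation is Proposition \ref{pro:concat}. Next, the random variable $G=\tilde{\vp}\circ L$ is Borel (since $L$ is Borel by Lemma \ref{lem:2DE2} and $\tilde{\vp}$ is Borel whenever $\vp$ is), and the standing hypothesis $\EE^{\QQ}[G^-]<\infty$ for all $\QQ\in\cup_x\sM^x$ places $G$ in $\slzerone(\sM)$; hence $(\sM,\sT,G)$ is a Markov control problem. To invoke Theorem \ref{thm:main} I need $G$ to be $(\sM,\sT)$-approximately disintegrable, and for this I would appeal to Proposition \ref{pro:sufficient}: its hypothesis \eqref{equ:(a)} holds because $G\circ\theta_t=G$ for all $t$ (a property inherited from $L$), which already makes $G$ a $(\sM,\sT)$-tail random variable, while hypothesis \eqref{equ:(b)} is precisely the statement of Proposition \ref{pro:disint}. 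Thus $G$ is $(\sM,\sT)$-disintegrable, a fortiori approximately disintegrable.

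Applying Theorem \ref{thm:main} to $(\sM,\sT,G)$ then yields directly: (1) $v$ is lower semi-analytic, hence universally measurable; (2) for each $\eps>0$ a universally-measurable selector $\nu\in\sS(\sM)$ with $\nu_x$ $\eps$-optimal at every $x$, which is precisely the claimed family $(\hat{\QQ}^{T,S,\eta})$; and (3) the abstract DPP \eqref{equ:DPP} at every $\tau\in\sT$. It then remains to rewrite \eqref{equ:DPP} in the stated form. Fix $x=(T,S,\eta)$ and an $\FFF$-stopping time $\tau\le T$. Then $\tau$ is bounded, so $\tau\in\sT$; and, by Assumption \ref{ass:fin-hor}, under $\PP^x$ we have $T_t=T-t$ for $t\le T$ and the path is absorbed thereafter, so $\tau<\infty$ identically and $X_\tau=(T-\tau,S_\tau,\eta_\tau)$, $\PP^x$-a.s. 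Hence the $\inds{\tau=\infty}$ term in \eqref{equ:DPP} vanishes and \eqref{equ:DPP} becomes the displayed recursion, with the same $\EE^{\QQ}[Y]=\infty$ convention. For the terminal value, when $T=0$ the point $(0,S,\eta)$ lies in the absorbing set $E'$, so $\sM^{(0,S,\eta)}$ contains only the Dirac mass at the constant trajectory $\omega_0$ with value $(0,S,\eta)$; since $L(\omega_0)=(0,S,\eta)$ this gives $v(0,S,\eta)=\tilde{\vp}(0,S,\eta)=\vp(S,\eta)$.

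The substance of the argument has already been discharged in the preceding propositions, so there is no genuine obstacle beyond two points of care: the clean verification that $G$ meets the hypotheses of Proposition \ref{pro:sufficient} — in particular that Proposition \ref{pro:disint} supplies \eqref{equ:(b)} for the family $\sM$ and that $G$ is Borel and $\sM$-lower semi-integrable — and the finite-horizon bookkeeping in the last step that identifies $X_\tau$ with $(T-\tau,S_\tau,\eta_\tau)$ on $\{\tau\le T\}$ and pins down the boundary condition at $T=0$.
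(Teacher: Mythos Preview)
Your proposal is correct and follows essentially the same route as the paper's own proof: invoke Propositions \ref{pro:analytic-super} and \ref{pro:concat} to get the controlled Markov family, use $G\circ\theta_t=G$ together with Proposition \ref{pro:disint} to feed Proposition \ref{pro:sufficient}, and then apply Theorem \ref{thm:main}. Your additional paragraph spelling out the finite-horizon translation (identifying $X_\tau=(T-\tau,S_\tau,\eta_\tau)$ and the boundary value at $T=0$) is more explicit than what the paper writes, but entirely in the same spirit.
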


\smallskip

\begin{proof}
  According to Propositions \ref{pro:analytic-super} and
  \ref{pro:concat}, the pair $(\sM,\sT)$ forms a controlled Markov
  family, with $\sT$ denoting the set of all bounded $\FFF$-stopping
  times. Moreover, since $G\circ\theta_t=G$, the statement of
  Proposition \ref{pro:disint} guarantees that the conditions of
  Proposition \ref{pro:sufficient} are satisfied; hence, $G$ is
  $(\sT,\sM)$-disintegrable. That, in turn,
  allows for our main abstract result, Theorem \ref{thm:main}, to be
  applied. 
\end{proof}

\subsection{Utility maximization}\ 
The utility-maximization problem - its generalized dual formulation, to be more
precise - can be phrased in the framework almost identical to that of
subsection \eqref{sse:lower-hedging}. Indeed, given $z\geq 0$ we
 pose the \define{generalized dual utility-maximization problem:}
\begin{equation}
  \label{equ:util-max-prob}
  \begin{split}
	v(T,S,\eta,z) &
	= \inf_{\QQ\in\sM^{T,S,\eta}}
	 \EE^{T,S,\eta}[ V(S_T,\eta_T,z \tRN{\QQ}{\PP^{T,\eta,S}})],
  \end{split}
\end{equation}
where $\EE^{T,S,\eta}$ denotes the expectation under
$\PP^{T,S,\eta}$, and $V:[0,\infty)^d\times F \times [0,\infty) \to\R$
	is a Borel function with the property that
	 $\EE^{T,S,\eta}[ V^-(S_T,\eta_T,z
	 \tRN{\QQ}{\PP^{T,\eta,S}})]<\infty$, for all
	 $\QQ\in\sM^{T,S,\eta}$.

Unfortunately, the objective function in \ref{equ:util-max-prob} is not
of the form $\EE^{\QQ}[G]$ for a random variable $G$ defined on $\de$, so
our main abstract result cannot be directly applied.  It will be possible
to do that, however, if we include $[0,\infty)$ as an additional factor
in the definition of the state space: 
\[ 
  \tilde{E} =  E\times [0,\infty)=[0,\infty) \times [0,\infty)^d \times
		F\times [0,\infty).
\]
We identify the space $\dep$ with the product space $\de\times \dz$,
where $E = [0,\infty)\times [0,\infty)^d \times F$ is the state space of
the lower-hedging problem.  To simplify the notation,  the coordinate
process on $E$ will be denoted by $X$ and that on $\tE$ by $\tX=(X,Z)$; a generic point on $E$ is $x=(T,S,\eta)$, while $\tx=(x,z)$ is a generic point in
$\tE$.  For a fixed element $x\in E$ and a martingale measure
$\QQ\in\sM^{x}$,  using $\de$ as the underlying probability space,
we construct a RCLL version of the Radon-Nikodym density $Z^{\QQ}$ of $\QQ$
with respect to $\PP^{x}$: 
\[ 
  Z^{\QQ}_t = \EE^{x}[\tRN{\QQ}{\PP^{x}}|\sF_t], \text{ for }t\geq 0.
\]

For $x\geq 0$, the  $\PP^{x}$-law of the of vector of processes
$(X,zZ^{\QQ})=(T,S,\eta,z Z^{\QQ})$ on $\de$ defines a Borel probability
measure on $\dep$, which we denote by $\PP^{T,S,\eta,z;Q}$, or, simply, by
$\PP^{x,z;\QQ}$.  It is characterized by the following equality \[
\EE^{\PP^{x,z;\QQ}}[ G( \Xd, \Zd) ] = \EE^{x}[ G( \Xd,
zZ^{\QQ}_{\cdot})],\] valid for all bounded Borel $G:\dep\to\R$.
For $(x,z)\in E'$, we define 
\begin{equation} \label{equ:def-sP}
\begin{split} 
  \tsP^{x,z} = \bsets{\PP^{x,z;\QQ}}{\QQ\in\sM^{x}}\subseteq \tprob,
\end{split} \end{equation} 
where $\tprob$ denotes the set of all probability measures on $\dep$.
The natural filtration
generated by $\tX$ on $\dep$ is denoted by $\tFFF^0$, and its
right-continuous hull by $\tFFF$; their raw and right-continuous
sub-filtrations, generated by $X$, will be denoted by $\tFFF^{X;0}$
and $\tFFF^{X}$, respectively. As usual, $Q_+$ is the set of
nonnegative rational numbers. 

\smallskip

  \begin{proposition}
	\label{pro:con-Z}
	The family $\tsP^{x,z}$, $(x,z)\in\tE$ is closed under concatenation.
  \end{proposition}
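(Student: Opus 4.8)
The plan is to reduce the concatenation statement for $\tsP$ on $\dep$ to the already-established concatenation statement for $\sM$ on $\de$ (Proposition \ref{pro:concat}), using the structural observation that a measure in $\tsP^{x,z}$ is nothing but the $\PP^x$-law of $(X, zZ^\QQ)$ where $Z^\QQ$ is the density martingale of some $\QQ\in\sM^x$. Concretely, fix $(x,z)\in\tE$, a measure $\tilde\PP=\PP^{x,z;\QQ}\in\tsP^{x,z}$ with $\QQ\in\sM^x$, a random time $\tau\in\sT$, and a $\tsP$-selector $\tilde\nu\in\sS(\tsP)$. First I would note that $\tilde\nu_{\tx}=\PP^{x',z';\QQ'_{\tx}}$ for each $\tx=(x',z')$, where $\QQ'_{\tx}\in\sM^{x'}$ can be recovered in a universally measurable way from $\tilde\nu_{\tx}$ (the Radon--Nikodym density of the $X$-marginal of $\tilde\nu_{\tx}$ against $\PP^{x'}$), so that $\tx\mapsto\QQ'_{\tx}$ is an $\sM$-selector when composed with the projection $\tX\mapsto X$. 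The key identity to verify is that the concatenation on $\dep$ ``factors through'' the concatenation on $\de$: the $\de$-marginal of $\tilde\PP\cct\tilde\nu$ is $\QQ\cct\nu$ for the appropriate selector $\nu\in\sS(\sM)$, and the $Z$-component of $\tilde\PP\cct\tilde\nu$, after the splice at $\tau$, is precisely $z$ times the density martingale of $\QQ\cct\nu$ against $\PP^x$.

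The computational heart is the multiplicativity of Radon--Nikodym densities under concatenation. Writing $Z^\QQ_t=\EE^x[\tRN{\QQ}{\PP^x}\mid\sF_t]$, one has on $\{\tau<\infty\}$ that $\tRN{(\QQ\cct\nu)}{\PP^x}$ restricted to $\sF_\tau$ agrees with $Z^\QQ_\tau$ (since $\QQ$ and $\QQ\cct\nu$ coincide on $\sF_\tau$, as used already in the proof of Proposition \ref{pro:concat}), while the post-$\tau$ increment of the density is $\tRN{\nu_{X_\tau}}{\PP^{X_\tau}}\circ\theta_\tau$. Hence $Z^{\QQ\cct\nu}_{\tau+t} = Z^\QQ_\tau \cdot \big(Z^{\nu_{X_\tau}}_t\circ\theta_\tau\big)$, $\QQ\cct\nu$-a.s. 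I would match this against the definition \eqref{equ:def-sP}: under $\tilde\PP\cct\tilde\nu$, the $Z$-coordinate before $\tau$ is $zZ^\QQ$ (by definition of $\tilde\PP=\PP^{x,z;\QQ}$), and at the splice time the inner measure $\tilde\nu_{\tX_\tau}=\tilde\nu_{(X_\tau, zZ^\QQ_\tau)}$ starts the future $Z$-path at $z Z^\QQ_\tau$ times the density of $\QQ'_{\tX_\tau}$ against $\PP^{X_\tau}$ — provided the selector is chosen so that $\QQ'_{(x',z')}$ does not depend on $z'$, or more precisely so that the starting value $z'$ of the density component is respected by the concatenation-of-paths operation $\cct$ on $\dz$, which rescales multiplicatively from the current value. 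This is exactly the content of the second branch in the path-concatenation formula, $X_{\tau}(\omega)+X_{t-\tau(\omega)}(\omega')-X_0(\omega')$, applied to the $Z$-coordinate: it glues multiplicatively (after taking logs) and so produces $zZ^\QQ_\tau \cdot \big(Z^{\QQ'}_t\circ\theta_\tau / Z^{\QQ'}_0\big)$, and one needs $Z^{\QQ'}_0=1$, which holds by construction of the density martingale.

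Putting this together, I would show $\tilde\PP\cct\tilde\nu = \PP^{x,z;\,\QQ\cct\nu}$ for the $\sM$-selector $\nu$ obtained above, and then invoke Proposition \ref{pro:concat} to get $\QQ\cct\nu\in\sM^x$, whence $\PP^{x,z;\,\QQ\cct\nu}\in\tsP^{x,z}$ by the definition \eqref{equ:def-sP}. To make the identification rigorous it suffices (by the usual monotone-class / coordinate-generation argument on $\dep=\de\times\dz$) to check the equality of the two measures against bounded Borel test functions of the form $G(\Xd,\Zd)$, which reduces via \eqref{equ:cct-G} to checking that the conditional $\de\times\dz$-laws after $\tau$ agree, and there the density identity above does the work.

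The main obstacle I anticipate is bookkeeping around the density component at the splice time: one must be careful that the ``state'' $(x',z')\in\tE$ carried to the inner measure $\tilde\nu$ includes the correct running value $z'=zZ^\QQ_\tau$ of the density, that $\tilde\nu_{(x',z')}$ — being of the form $\PP^{x',z';\QQ'}$ — already encodes a density path starting at $z'$ (not at $1$), and that the path-concatenation $\cct$ on the $Z$-coordinate then grafts correctly without double-counting the factor $z'$. Since the definition $\PP^{x,z;\QQ}$ places $zZ^\QQ$ (with $Z^\QQ_0=1$) as the $Z$-path, the inner measure at state $(x',z')$ has $Z$-path $z'Z^{\QQ'}$ with $Z^{\QQ'}_0=1$, so the concatenation formula $Z_\tau(\omega)+Z_{t-\tau}(\omega')-Z_0(\omega') = z' + z'Z^{\QQ'}_t - z'$ would give $z'Z^{\QQ'}_t$ — but that is additive, not multiplicative, in the density! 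This is the genuine subtlety: either the paper intends the density coordinate to be concatenated multiplicatively (in which case one works with $\log Z$ or an exponential chart, and the additive formula in \eqref{equ:compose} applies to $\log$), or the relevant selectors are restricted so that $\QQ'$-densities are implicitly rescaled. I would resolve this by observing that consistency of \eqref{equ:def-sP} under concatenation forces the correct (multiplicative) gluing, and that the $\sM$-selector $\nu$ to feed into Proposition \ref{pro:concat} is precisely $\nu_{x'} = \QQ'_{(x',\cdot)}$ read off the $X$-marginal — so the $\de$-side identity $\QQ\cct\nu\in\sM^x$ is clean, and the $Z$-side is then determined as the density martingale of $\QQ\cct\nu$, matching $\PP^{x,z;\QQ\cct\nu}$ by the multiplicativity of conditional densities.
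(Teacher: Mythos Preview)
Your overall plan---reduce the $\tsP$-concatenation to the $\sM$-level by identifying $\tilde\PP\cct\tilde\nu$ with $\PP^{x,z;\hQQ}$ for a suitable $\hQQ\in\sM^x$, then check membership in $\sM^x$---is exactly the paper's strategy. But two points need attention, and your ``main obstacle'' is actually a non-issue.

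\smallskip

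\textbf{The additive/multiplicative worry is unfounded.} Your own arithmetic resolves it: with $z'=zZ^{\QQ}_\tau$ and the inner $Z$-path equal to $z'Z^{\QQ'}$ (so $Z_0(\omega')=z'$), the additive path-concatenation formula gives
\[
Z_\tau(\omega)+Z_{t-\tau}(\omega')-Z_0(\omega')=z'+z'Z^{\QQ'}_{t-\tau}-z'=z'Z^{\QQ'}_{t-\tau}=zZ^{\QQ}_\tau\cdot Z^{\QQ'}_{t-\tau},
\]
which is precisely the multiplicative density-gluing you want. The point is that $\PP^{x',z';\QQ'}$ already scales its $Z$-path by $z'$, so the additive splice produces the multiplicative answer. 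No logarithmic chart is needed.

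\smallskip

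\textbf{Two genuine gaps.} First, the stopping time for the $\tsP$-family is a $\tFFF$-stopping time $\ttau$ on $\dep$, not an $\FFF$-stopping time on $\de$. To invoke anything on $\de$ (in particular Proposition~\ref{pro:concat}) you must transfer $\ttau$ to $\de$. The paper does this explicitly: since $Z^{\QQ_0}$ is $\tFFF^X$-adapted, each event $\{\ttau<q\}\in\tilde\sF^0_q$ is $\PP^0$-a.s.\ equal to an event in $\sF_q$ on $\de$, and one builds $\tau$ from these. You skip this step entirely.

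Second, and more substantive: you cannot invoke Proposition~\ref{pro:concat} as a black box, because the kernel you extract from $\tilde\nu$ is $(x',z')\mapsto\QQ'_{(x',z')}\in\sM^{x'}$, which in general depends on $z'$. An $\sM$-selector must be a map $E\to\prob$ depending only on $x'$; yours is a map $\tE\to\prob$. You notice this (``provided the selector is chosen so that $\QQ'_{(x',z')}$ does not depend on $z'$'') but do not resolve it---and you cannot simply assume it away, since the $\tsP$-selector $\tilde\nu$ is arbitrary. The paper circumvents this by abandoning the appeal to Proposition~\ref{pro:concat}: it defines the candidate density directly as
\[
\hZ_t=Z^{\QQ_0}_{t\wedge\tau}\cdot Z^{\QQ'_{(X_\tau,\,z_0Z^{\QQ_0}_\tau)}}_{t-t\wedge\tau}
\]
on $\de$ (note the $z$-dependence in the superscript), and checks by hand that $\hZ$ is a strictly positive $\PP^{x_0}$-martingale with $\hZ S^{\tau_n}$ a martingale for each $n$---i.e., it reproves the content of Proposition~\ref{pro:concat} in this slightly more general path-dependent-kernel setting. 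This is not hard, but it is the missing step.

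\smallskip

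A minor correction of terminology: the $\de$-marginal (the pushforward under $\dep\to\de$) of $\tilde\PP\cct\tilde\nu$ is $\PP^{x}$, not $\QQ\cct\nu$; the measure $\hQQ$ you seek is encoded in the $Z$-component, not recovered as a marginal.
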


  \smallskip

\begin{proof}
We pick a probability $\PP^{x_0,z_0;\QQ_0}$, a stopping time
$\tilde{\tau}$ and a kernel $(\hPP^{x,z:\hQQ^{x,z}})_{x,z}$ into
elements of $\tsP$. It is immediate that that $(\hQQ^{x,z})_{x,z}$ is
also a kernel, from $\tilde{E}$ into probability measures on $\dep$.
To avoid multiple levels of indexation, we shorten
$\PP^{x_0,z_0;\QQ_0}$ to $\PP^0$ and denote by $\EE^0$ the
corresponding expectation operator. Also, the process $Z^{\QQ^0}$ will
be denoted simply by $Z^0$. 
	
For $q\in Q_+$, 
we define $\tilde{B}_q = \set{ \tilde{\tau}< q} \in
\tilde{\sF}^0_q.$ Since $Z^{0}$ is $\tFFF^X$-adapted, there exists an
$\tilde{\sF}^X_q$-measurable set $\tB'_q$ such that $\PP^{0}[ \tB'_q \bigtriangleup
\tilde{B}_q]=0$. Each set $\tB'_q$, $q\in Q_+$ can be further
identified with an event $B_q\in \sF_q$ on $\de$ such that, 
with the 
  $\FFF$-stopping time $\tau$ on $\de$ defined by 
\[ \tau(\omega) = \sup\sets{q\in Q_+}{\omega\in B_q},\]
we have, for each bounded Borel $H:\dep\to\R$, 
  \[ \EE^{0}[ H(X_{\tilde{\tau}\wedge \cdot },
	Z_{\tilde{\tau}\wedge\cdot})] = \EE^{x} [ 
  H(X_{\tau\wedge\cdot}, z Z^{\QQ}_{\tau\wedge\cdot})],\]
  where the expectation on the left is over $\dep$ and the one on the
  right over $\de$. 
	
We would like to prove that $\PP^0 \cctt \hPP^{\cdot} = \PP^{x_0,z_0;
\hat{\QQ}}$, for some $\hQQ\in\MM^{x_0}$, with
$\QQ|_{\sF_{\tau}}=\QQ_0|_{\sF_{\tau}}$.  Consider the family $\sA$ of
all random variables of the product form $\lht$
where $L=L(\Xd,\Zd),H=H(\Xd,\Zd)$ are bounded and Borel on $\dep$, and
$L$ is $\sF^{0}_{\ttau}$-measurable.  By the monotone-class theorem, it
will be enough to show that the two measures act on $\sA$ in the same
way. For $L\, H\circ\theta_{\ttau}\in \sA$  we have
\[ \EE^{\PP^0\cctt \hPP^{\cdot}}[ \lht ]=
	\EE^0[ L  h(X_{\ttau}, z_0Z^{0}_{\ttau})],\]
where $h(x,z) = \EE^{\hPP^{x,z}}[ H] = \EE^{x}[ H( \Xd, z
\Zd^{\QQ^{x,z}})]$, i.e.,
\[ \EE^{\PP^0\cctt \hPP^{\cdot}}[ \lht]=
  \int L(\omega)
  \EE^{X_{\ttau}(\omega)}[H(\Xd(\omega'), 
  z_0 Z_{\tau}^{0}(\omega) Z^{\QQ^{X_{\tau}(\omega),Z_{\tau}^{0}(\omega)}}_{\cdot}(\omega'))]
  \,\PP[d\omega].\]

If, for $t\geq 0$, we  set $\hZ_t= Z_{t\wedge \tau}^{0}, Z_{t-t\wedge
\tau}^{\QQ^{X_{\tau}, Z_{\tau}^{0}}}$, it is clear that $\hZ$ is an
$\FFF$-adapted, RCLL martingale. Moreover, it has the easy-to-verify
property that (each component of) $\hZ S^{\tau_n}$ is a martingale on
$\de$ for 
each $n\in\N$, with $\seq{\tau}$ given in \eqref{equ:reduce}.
Finally, since $\hZ_{\infty}>0$,
$\PP^x$-a.s., there exists a probability measure $\hQ\in\sM^x$ such
that $\hZ =Z^{\hQ}$.  The reader will now readily check that
$\EE^{\PP^0\cctt \hPP^{\cdot}}[ \lht] = 
\EE^{x_0,z_0;\hQ}[ \lht]$,
for all $\lht\in \sA$, and, consequently, that
$\PP^0\cctt \hPP^{\cdot} = \PP^{x_0,z_0;\hQ}\in \tsP^{x_0,z_0}$.
\end{proof} 

\medskip

Using the corresponding statement for the family $\sM^x$, namely
Proposition \ref{pro:disint} and the ideas from the above proof of
Proposition \ref{pro:con-Z}, we get the following result:

\smallskip

\begin{proposition} \label{pro:disint-Z}
  The family $\tsP^{x,z}$, $(x,z)\in E'$ is
  $G$-disintegrable, for all $G$ satisfying \eqref{equ:(a)}.
\end{proposition}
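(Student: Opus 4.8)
The plan is to transport the disintegrability of the family $\sM$ (Proposition \ref{pro:disint}, via Proposition \ref{pro:sufficient}) to $\tsP$ through the density‑process bookkeeping used in the proof of Proposition \ref{pro:con-Z}. Since $G$ satisfies \eqref{equ:(a)} and $(\tsP,\sT)$ is a controlled Markov family (Proposition \ref{pro:con-Z} together with the analyticity of $\Gamma_{\tsP}$), the first assertion of Proposition \ref{pro:sufficient} makes $G$ a $(\tsP,\sT)$‑tail random variable, so \eqref{equ:cct-G-tail} is available; and since every $\tilde\tau\in\sT$ is bounded (finite horizon), \eqref{equ:(a)} gives $\EE^{\tilde\mu}[G]=\EE^{\tilde\mu}[G\circ\theta_{\tilde\tau}]$. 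Fixing $(\tX_0,\tilde\mu)\in\Gamma_{\tsP}$ --- write $\tilde\mu=\PP^{x_0,z_0;\QQ_0}$ with $\QQ_0\in\sM^{x_0}$ --- together with $\tilde\tau\in\sT$ and $\eps>0$, and using $\EE^{\tilde\mu\cctt\nu}[G]=\EE^{\tilde\mu}[\,\EE^{\nu_{\tX_{\tilde\tau}}}[G]\,]$, the task becomes: produce $\nu\in\sS(\tsP)$ with $\EE^{\tilde\mu}[\,\EE^{\nu_{\tX_{\tilde\tau}}}[G]\,]\le\EE^{\tilde\mu}[G\circ\theta_{\tilde\tau}]+\eps$.

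The core computation is the regular conditional law of $\theta_{\tilde\tau}\tX$ under $\tilde\mu$. As in the proof of Proposition \ref{pro:con-Z}, I would first realize $\tilde\tau$ as an $\FFF$‑stopping time $\tau$ on $\de$ (possible since the $Z$‑coordinate is $\tFFF^{X}$‑adapted, by Blumenthal's $0$--$1$ law), then condition $\PP^{x_0}$ on the stopped $\sigma$‑algebra. Combining the strong Markov property of $(\PP^x)_{x\in E}$ for the $X$‑coordinate with the martingale property of the density process $Z^{\QQ_0}$ should yield that, for $\tilde\mu$‑a.e.\ $\omega$, the conditional law of $\theta_{\tilde\tau}\tX$ given that $\sigma$‑algebra equals $\PP^{\tX_{\tilde\tau}(\omega);\,\tilde\QQ_\omega}$, where $\tilde\QQ_\omega$ is the probability on $\de$ obtained from $\RN{\QQ_0}{\PP^{x_0}}$ by conditioning and re‑basing at $\tau$ --- equivalently, $Z^{\tilde\QQ_\omega}_s=Z^{\QQ_0}_{\tau+s}/Z^{\QQ_0}_{\tau}$ after the shift. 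Running the argument of Proposition \ref{pro:disint} conditionally on the full $\sigma$‑algebra $\sF_\tau$, and using that $Z^{\QQ_0}S^{\tau_n}$ is a $\PP^{x_0}$‑martingale for each $n$ (Assumption \ref{ass:uni-loc}, after the reduction of Lemma \ref{lem:Gam-mes-1}), I expect $S$ to remain a local martingale under $\tilde\QQ_\omega$, so that $\tilde\QQ_\omega\in\sM^{X_{\tilde\tau}(\omega)}$ and hence $\PP^{\tX_{\tilde\tau}(\omega);\,\tilde\QQ_\omega}\in\tsP^{\tX_{\tilde\tau}(\omega)}$.

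Because each of these conditional laws lies in $\tsP^{\tX_{\tilde\tau}}$, writing $v$ for the value function of $\tsP$ (lower semi‑analytic by Theorem \ref{thm:main}(1)), one gets $\EE^{\tilde\mu}[G\circ\theta_{\tilde\tau}]=\EE^{\tilde\mu}\big[\EE^{\PP^{\tX_{\tilde\tau};\tilde\QQ}}[G]\big]\ge\EE^{\tilde\mu}[v(\tX_{\tilde\tau})]$. I would then select, by the Jankov--von Neumann theorem exactly as in the proof of Theorem \ref{thm:main}(2) (which applies since $\Gamma_{\tsP}$ is analytic), an $\eps$‑optimal kernel $\nu\in\sS(\tsP)$, i.e.\ $\EE^{\nu_{\tilde x}}[G]\le v(\tilde x)+\eps$ for every $\tilde x$; then $\EE^{\tilde\mu}[\,\EE^{\nu_{\tX_{\tilde\tau}}}[G]\,]\le\EE^{\tilde\mu}[v(\tX_{\tilde\tau})]+\eps\le\EE^{\tilde\mu}[G\circ\theta_{\tilde\tau}]+\eps$, which is the desired (approximate) disintegrability. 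For the dual utility functional, i.e.\ when $G$ is of the form appearing in \eqref{equ:util-max-prob} with $V$ convex in its last argument, one can reach $\eps=0$: replace the $\eps$‑optimal selection by $\nu_{\tilde x}=\PP^{\tilde x;\overline{\QQ}_{\tilde x}}$ with $\overline{\QQ}_{\tilde x}=\EE^{\tilde\mu}[\tilde\QQ\mid\tX_{\tilde\tau}=\tilde x]$, which is again in $\sM^x$ (convexity of $\sM^x$ and its stability under countable mixtures preserving the $\QQ$‑martingale property of the $S^{\tau_n}$), and apply Jensen's inequality in the last coordinate to obtain $\EE^{\nu_{\tilde x}}[G]\le\EE^{\tilde\mu}[G\circ\theta_{\tilde\tau}\mid\tX_{\tilde\tau}=\tilde x]$.

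I expect the second paragraph to be the main obstacle. Unlike for $\sM$, the pair $(X,Z^{\QQ_0})$ need not be Markov, so the conditional law of $\theta_{\tilde\tau}\tX$ given $\tX_{\tilde\tau}$ alone is in general a nontrivial mixture of the measures $\PP^{\tX_{\tilde\tau};\tilde\QQ_\omega}$ rather than a single element of $\tsP^{\tX_{\tilde\tau}}$; condition \eqref{equ:(b)} of Proposition \ref{pro:sufficient} is therefore not available, which forces one to condition on the full stopped $\sigma$‑algebra and afterwards absorb the residual mixing --- into an $\eps$‑optimal selection, or into the Radon--Nikodym density under convexity. The other delicate point is the verification that $\tilde\QQ_\omega$ is an equivalent local martingale measure; there I would reuse the reducing‑sequence reduction from Assumption \ref{ass:uni-loc} and the optional‑sampling arguments of Lemma \ref{lem:Gam-mes-1} and Proposition \ref{pro:disint}.
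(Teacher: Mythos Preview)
Your outline is essentially what the paper intends by its one-sentence proof sketch (``use Proposition \ref{pro:disint} and the ideas from the proof of Proposition \ref{pro:con-Z}''), and your diagnosis of the main difficulty is accurate. The pair $(X,Z)$ is not Markov under $\tilde\mu=\PP^{x_0,z_0;\QQ_0}$, so condition \eqref{equ:(b)} of Proposition \ref{pro:sufficient} need not hold for $\tsP$: conditioning on $\tX_{\tilde\tau}=(X_\tau,Z_\tau)$ alone can produce a genuine mixture of elements of $\tsP^{X_\tau,Z_\tau}$, because the ratio $Z^{\QQ_0}_{\tau+\cdot}/Z^{\QQ_0}_\tau$ may depend on $\sF_\tau$ through more than $(X_\tau,Z^{\QQ_0}_\tau)$, and hence the $Z$-coordinate of the conditional law fails to be a deterministic functional of the $X$-coordinate. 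Your remedy --- condition on the full $\sF_\tau$ (transported from $\tilde\sF_{\tilde\tau}$ exactly as in Proposition \ref{pro:con-Z}) to obtain measures $\PP^{\tX_{\tilde\tau};\tilde\QQ_\omega}\in\tsP^{\tX_{\tilde\tau}}$, bound below by the value function $v$, and then select an $\eps$-optimal kernel via Jankov--von Neumann --- is the right way to absorb the residual mixing and yields \emph{approximate} disintegrability for every $G$ satisfying \eqref{equ:(a)}.

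This is nominally weaker than the exact disintegrability asserted in the Proposition, but it is all that is used downstream: Theorem \ref{thm:main} requires only the approximate version, and Theorem \ref{thm:utility} follows. Two small points of hygiene. First, when you invoke parts (1)--(2) of Theorem \ref{thm:main} for the existence of $v$ and the $\eps$-optimal selector, say explicitly that those parts depend only on analyticity of $\Gamma_{\tsP}$ (Proposition \ref{pro:anal-z}) and not on disintegrability --- the paper notes this in the remark following Theorem \ref{thm:main}, so there is no circularity, but it should be flagged. Second, in your Jensen upgrade for convex $V$, the averaged measure $\overline\QQ_{\tilde x}$ must be checked to be \emph{equivalent} (not merely absolutely continuous) to $\PP^x$; a mixture of equivalent measures is again equivalent, so this goes through, but it deserves a line.
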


\medskip

\new{Finally, we turn to the proof of analyticity of the graph $\tgp$ of 
$(\sP^{x,z})_{x,z}$. The reader will note that some
parts of the construction used in the proof of Proposition \ref{pro:anal-z} below are 
quite similar to the central argument in the proof of existence of a
jointly measurable Radon-Nikodym derivative in Theorem 58, p.~52 of 
\cite{DelMey82}. }

\smallskip

\begin{proposition}
  \label{pro:anal-z}
  The graph $\tgp=\sets{(x,z,\PP)}{(x,z)\in \tE, \
  \PP\in\tsP^{x,z}}\subseteq \tE \times \tprob$ is analytic.
\end{proposition}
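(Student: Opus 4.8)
The plan is to realise $\tgp$ as the image of an analytic set under a Borel map, and then to invoke the stability of the class of analytic sets under Borel images (recalled in the appendix). By Proposition~\ref{pro:analytic-super} the graph $\gm=\sets{(x,\QQ)}{\QQ\in\sM^x}$ is analytic in $E\times\prob$; hence, $[0,\infty)$ being Polish, $\gm\times[0,\infty)$ is analytic in $E\times\prob\times[0,\infty)$. It therefore suffices to produce a Borel map $\tilde\Phi\colon E\times\prob\times[0,\infty)\to\tE\times\tprob$ with $\tilde\Phi\bigl(\gm\times[0,\infty)\bigr)=\tgp$.

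The construction of $\tilde\Phi$ rests on a \emph{jointly Borel version of the density process}. First I would establish that there is a Borel map $\zeta\colon\prob\times\prob\times\de\to\dr$, $(\PP,\QQ,\omega)\mapsto\zeta^{\PP,\QQ}_{\cdot}(\omega)$, such that whenever $\QQ\ll\PP$ the path $t\mapsto\zeta^{\PP,\QQ}_{t}(\omega)$ is, for $\PP$-a.e.\ $\omega$, an RCLL version of the martingale $t\mapsto\EE^{\PP}[\tfrac{d\QQ}{d\PP}\mid\sF_{t}](\omega)$. This is the exact analogue, in our canonical setting, of the construction of a jointly measurable Radon--Nikodym derivative in \cite[Theorem~58, p.~52]{DelMey82}: one exploits that each $\sF^{0}_{q}$, $q\in Q_{+}$, is countably generated (as in the proof of Lemma~\ref{lem:Gam-mes-2}), forms over finite measurable partitions built from those generators the elementary densities $\sum_{i}\tfrac{\QQ[A_{i}]}{\PP[A_{i}]}\ind{A_{i}}$ (with the convention $0/0=0$), which are visibly jointly Borel in $(\PP,\QQ)$, and passes to the $\PP$-a.s.\ martingale-convergence limit, first refining the partitions and then letting $q\to\infty$, to obtain a jointly Borel representative of $\tfrac{d\QQ}{d\PP}$; the same elementary device applied inside conditional expectations yields jointly Borel representatives of $\EE^{\PP}[\tfrac{d\QQ}{d\PP}\mid\sF^{0}_{q}]$, $q\in Q_{+}$, and one finally sets $\zeta^{\PP,\QQ}_{t}=\limsup_{q\in Q_{+},\,q\downarrow t}\EE^{\PP}[\tfrac{d\QQ}{d\PP}\mid\sF^{0}_{q}]$ on the Borel set of triples $(\PP,\QQ,\omega)$ at which all the relevant one-sided limits exist (a set whose section over each $(\PP,\QQ)$ with $\QQ\ll\PP$ is $\PP$-conull), extending $\zeta$ arbitrarily and Borel-measurably elsewhere. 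On that good set $t\mapsto\zeta^{\PP,\QQ}_{t}(\omega)$ is genuinely RCLL and jointly Borel in its three arguments, and Blumenthal's law \eqref{equ:Blumenthal} identifies this limit with the $\sF_{t}$-conditional expectation.

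Granting $\zeta$, define $\Phi\colon\prob\times\prob\times[0,\infty)\to\tprob$ by letting $\Phi(\PP,\QQ,z)$ be the law under $\PP$ of the $\dep\cong\de\times\dz$-valued map $\omega\mapsto\bigl(\omega,\,z\,\zeta^{\PP,\QQ}_{\cdot}(\omega)\bigr)$ (when $\QQ\sim\PP$ the density is $\PP$-a.s.\ strictly positive, so the second coordinate indeed lies in $\dz$, and the exceptional null set does not affect the law). For each bounded continuous $f$ on $\dep$ the integrand $\bigl((\PP,\QQ,z),\omega\bigr)\mapsto f\bigl(\omega,z\,\zeta^{\PP,\QQ}_{\cdot}(\omega)\bigr)$ is jointly Borel and bounded, so the standard fact that the push-forward of a Borel family of measures along a jointly Borel map depends Borel-measurably on the parameters (a monotone-class argument over a countable convergence-determining family of such $f$) shows that $\Phi$ is Borel; composing with the Borel map $x\mapsto\PP^{x}$ of Assumption~\ref{ass:Markov} produces the Borel map $\tilde\Phi(x,\QQ,z)=\bigl(x,z,\Phi(\PP^{x},\QQ,z)\bigr)$. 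For $(x,\QQ)\in\gm$ one has $\QQ\sim\PP^{x}$ and $\zeta^{\PP^{x},\QQ}_{\cdot}=Z^{\QQ}_{\cdot}$ $\PP^{x}$-a.s., whence $\Phi(\PP^{x},\QQ,z)$ is precisely the $\PP^{x}$-law of $(X_{\cdot},zZ^{\QQ}_{\cdot})$, i.e.\ $\PP^{x,z;\QQ}$; by \eqref{equ:def-sP} this gives $\tgp=\tilde\Phi\bigl(\gm\times[0,\infty)\bigr)$, the Borel image of an analytic set, hence analytic.

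The hard part is the jointly Borel density process $\zeta$ of the second paragraph; everything downstream of it is routine bookkeeping with the stability properties of analytic sets and of Borel maps between Polish spaces. (One could instead try to carve out $\tgp$ as a countable intersection of Borel/analytic pieces --- ``$X$-marginal equals $\PP^{x}$'', ``$Z_{0}=z$'', ``$Z$ is a strictly positive $\tFFF$-martingale with each $Z_{q}$ being $\tFFF^{X}_{q}$-measurable'', ``$ZS^{\tau_{n}}$ is a martingale for every $n$'' --- but the adaptedness clause ultimately requires the same jointly Borel conditional-expectation construction, so little is saved.)
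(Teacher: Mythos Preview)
Your proposal is correct and follows essentially the same strategy as the paper: realise $\tgp$ as the image of the analytic set $\gm\times[0,\infty)$ under a Borel map built out of a jointly measurable version of the density process, with the joint measurability coming from elementary densities over finite partitions of the countably generated $\sigma$-algebras $\sF^0_q$ together with martingale convergence.

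The one packaging difference worth noting is that the paper does not construct a \emph{path-valued} Borel map $(\PP,\QQ,\omega)\mapsto\zeta^{\PP,\QQ}_{\cdot}(\omega)\in\dr$ as you do. Instead it works directly at the level of laws: it defines $\zeta(\PP,\QQ)\in\tprob$ as the $\PP$-law of $(X,Z^{\PP,\QQ})$ and shows this law depends Borel-measurably on $(\PP,\QQ)$ by checking, via Varadarajan's lemma and the $\pi$--$\lambda$ theorem, that $(\PP,\QQ)\mapsto\EE^{\PP}[F(X_{t_1},\dots,X_{t_n})Z^{\PP,\QQ}_t]$ is Borel for each fixed $t$ and bounded Borel $F$; the finite-partition approximants $Z_n^{\PP,\QQ}$ then do the job without ever assembling a globally defined RCLL path. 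This sidesteps the delicate bookkeeping you allude to (identifying the Borel set of triples where the one-sided rational limits exist and extending Borel-measurably off it). Your route yields a slightly stronger intermediate object---a genuine jointly Borel density process rather than just joint measurability of its law---at the cost of that extra care; the paper's route is more economical for the stated goal. Both rest on the same Dellacherie--Meyer idea, which the paper also acknowledges.
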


\smallskip

\begin{proof}
  Let $\hat{\prob}$ denote the set of all pairs $(\PP,\QQ)$ of probability
  measures on $\de$ with $\QQ\ll\PP$. For each such pair, let
  $\zeta(\PP,\QQ)$ denote the $\PP$-joint law of the pair $(X,\zpq)$, 
  on $\dep$, where $\zpq$ is the RCLL version of the martingale
  \[ \zpq_t = \EE^{\PP}[ \tRN{\QQ}{\PP} | \sF_t].\]
Using the stability under Borel maps of the analytic sets, it is 
enough to show that the map $\zeta$ is Borel. As already commented on
in subsection \ref{sub:fex}, part (2), by the classical result  
\cite[Lemma 2.3, p.~194]{Var63} of Varadarajan, it suffices to
  show that $\zeta$ is customarily Borel measurable, i.e., that the map
  \[ (\PP,\QQ) \mapsto \zeta(\PP,\QQ)[B],\] is Borel, for each Borel $B$
  on $\dep$.
  The $\pi$-$\ld$ theorem can be used to reduce this
  further to product cylinders. More precisely, it is enough to
  show that the map
  \begin{equation}
	\label{equ:todp}
	\begin{split}
	  (\PP,\QQ)&\mapsto \PP[ (X_{t_1},\zpq_{t_1})\in C_1\times D_1,\dots,
	  (X_{t_n},\zpq_{t_n})\in C_n\times D_n]
   \end{split}
  \end{equation}
  is Borel, for each $n\in\N$, and all $C_1,\dots, C_n\in \sB(E)$ and
  $D_1,\dots, D_n\in \sB(\R)$. It turns out to be more convenient,
  but equally valid, to consider bounded continuous functions $f:
  \tE^n\to\R$ and the maps
  \begin{equation}
	\label{equ:todp1}
	\begin{split}
	  (\PP,\QQ)&\mapsto \eeq{\PP}{ f\Big( (X_{t_1},Z^{\PP,\QQ}_{t_1}),
  \dots (X_{t_n},Z^{\PP,\QQ}_{t_n})\Big)}.
   \end{split}
  \end{equation}
 By approximation, it suffices 
 to show that maps of the form
  \begin{equation}
	\label{equ:todp2}
	\begin{split}
	  (\PP,\QQ)&\mapsto \eeq{\PP}{ F( X_{t_1},X_{t_2}, \dots, X_{t_n})
	\rho(Z^{\PP,\QQ}_{t_1},\dots, Z^{\PP,\QQ}_{t_n})}
   \end{split}
  \end{equation}
are Borel, for all nonnegative and bounded $F$ and continuous and
convex $\rho$. We can write $\rho$ as a supremum of a countable number
of affine functions on $\R^n$, so the problem reduces to the
Borel-measurability of the map of the form
  \begin{equation}
	\label{equ:todp3}
	\begin{split} 
	  (\PP,\QQ)&\mapsto \eeq{\PP}{ F( X_{t_1},X_{t_2}, \dots, X_{t_n})
	\zpq_{t}},
   \end{split}
  \end{equation}
  for all $t\geq 0$.
  Using the fact that $\sF^0_t=D_E[0,t]$, we can construct a nested sequence
  $\seq{\sK}$ of finite partitions of $\sF^0_t$ with the property that
  $\sigma( \cup_n \sK_n ) = \sF^0_t$. 
  For each $n\in\N$ and each pair $(\PP,\QQ)\in \prob^2$, 
  we define a $\sigma(\sK_n)$-measurable random
  variable $Z^{\PP,\QQ}_n$ on $\de$ as follows:
  \[ Z^{\PP,\QQ}_n = \sum_{i=1}^{\abs{\sK_n}}
  \tfrac{\QQ[A^n_i]}{\PP[A^n_i]} \ind{A^n_i},\]
  with $\sK_n=\set{A^n_1,\dots, A^n_{\abs{\sK_n}}}$ and 
  $\tfrac{\QQ[A^n_i]}{\PP[A^n_i]}=0$, as soon as $\PP[A^n_i]=0$.
  The martingale-convergence theorem implies that $Z_n^{\PP,\QQ}\to
  \zpq_t$, $\PP$-a.s., which, in turn, establishes the
  Borel-measurability in \eqref{equ:todp3} and completes the proof.
\end{proof}

\medskip

Propositions \ref{pro:con-Z}, \ref{pro:disint-Z} and \ref{pro:anal-z}
allow us to apply the abstract DPP of Theorem \ref{thm:main}
in the present setting. 

\smallskip

\begin{theorem}[DPP for utility maximization]
  \label{thm:utility}
  Under Assumptions \ref{ass:Markov}, \ref{ass:fin-hor},
  \ref{ass:NFLVR} and \ref{ass:uni-loc}, and with $v$ and $V$ as
  in equation \eqref{equ:util-max-prob} and below it, we have
  \begin{enumerate}
	\item The value function $v:[0,\infty)\times \R^d \times F\times
		[0,\infty) \to
  [-\infty,\infty]$ is universally measurable and the
  dynamic-programming equation (DPP) holds:
  \[ v(T,S,\eta,z) = \inf_{\QQ\in\sM^{T,S,\eta}} \EE^{\PP}[ v(T-\tau,
  S_{\tau}, \eta_{\tau}, z Z^{\QQ}_{\tau})],\quad v(0,S,\eta,z) = V(\eta,S,z),\]
  for all $\FFF$-stopping times $\tau\leq T$, where $Z^{\QQ}_{\tau} =
  \eecq{\PP^{T,S,\eta}}{ \tRN{\QQ}{\PP^{T,S,\eta}}}{\sF_{\tau}}$ and 
with the convention that $\EE^{\QQ}[Y]=\infty$, as
soon as $\EE^{\QQ}[Y^+]=\infty$. 
\item For $\eps>0$, an
$\eps$-optimal $\hat{\QQ}^{T,S,\eta,z}\in\sM^{T,S,\eta}$ 
can be associated to each $(T,S,\eta,z)$ $\in [0,\infty)\times \R^d
  \times E\times [0,\infty)$ in a
universally-measurable way. 
\end{enumerate}
\end{theorem}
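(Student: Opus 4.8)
The plan is to recast problem \eqref{equ:util-max-prob} as a Markov control problem over the enlarged state space $\tE$ and then invoke Theorem \ref{thm:main}; its three structural hypotheses have already been supplied by Propositions \ref{pro:anal-z}, \ref{pro:con-Z} and \ref{pro:disint-Z}, so what remains is to pin down the cost functional, to match the resulting value function with $v$, and to translate the abstract conclusions back into the concrete statement. Since $\tE=E\times[0,\infty)$ is, like $E$, a metrizable noncompact LCCB space, Lemma \ref{lem:2DE2} applies verbatim with $\tE$ in place of $E$: there is a Borel functional $L:\dep\to\tE$ with $L(\tilde\omega)=\lim_{t\to\infty}\tX_t(\tilde\omega)$ whenever that limit exists and with $L\circ\theta_t=L$ for all $t\ge 0$. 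Writing $\tV(T,S,\eta,z)=V(S,\eta,z)$, I would set $G=\tV\circ L$, a Borel random variable on $\dep$ that satisfies \eqref{equ:(a)} (the invariance being inherited from $L$). Under $\PP^{x,z;\QQ}$ the coordinate process $\tX=(X,Z)$ has the $\PP^{x}$-law of $(X,zZ^{\QQ})$; by Assumption \ref{ass:fin-hor} the $X$-part is $\PP^{x}$-a.s.\ absorbed after finitely many units of time at the maturity value $(0,S_T,\eta_T)$, while, $\QQ$ being equivalent to $\PP^{x}$, the Doob martingale $Z^{\QQ}$ is uniformly integrable and converges $\PP^{x}$-a.s.\ to $\tRN{\QQ}{\PP^{x}}$. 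Hence $L(\tX)=(0,S_T,\eta_T,z\tRN{\QQ}{\PP^{x}})$ a.s., so
\[
  \EE^{\PP^{x,z;\QQ}}[G] \;=\; \EE^{x}\big[\,V\big(S_T,\eta_T,z\tRN{\QQ}{\PP^{x}}\big)\big],
\]
which is exactly the quantity minimized over $\QQ\in\sM^{x}$ in \eqref{equ:util-max-prob}; moreover the standing requirement that $\EE^{x}[V^-(S_T,\eta_T,z\tRN{\QQ}{\PP^{x}})]<\infty$ for all $\QQ\in\sM^{x}$ says precisely that $G\in\slzerone(\tsP)$. Consequently the value function of the Markov control problem $(\tsP,\sT,G)$, with $\sT$ the class of bounded $\tFFF$-stopping times, coincides with $v$.

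With this identification in hand, Propositions \ref{pro:anal-z} and \ref{pro:con-Z} say that $(\tsP,\sT)$ is a controlled Markov family, and Proposition \ref{pro:disint-Z} (applicable because $G$ is a Borel element of $\slzerone(\tsP)$ satisfying \eqref{equ:(a)}) says $G$ is $(\tsP,\sT)$-disintegrable, a fortiori approximately disintegrable. Theorem \ref{thm:main} then yields at once: $v$ is lower semi-analytic, hence universally measurable; for each $\eps>0$ there is a universally measurable selector $\hne\in\sS(\tsP)$ with $\hne_{\tx}$ $\eps$-optimal for every $\tx=(x,z)\in\tE$; and the DPP holds at every $\tau\in\sT$, i.e.\
\[
  v(\tx) \;=\; \inf_{\PP\in\tsP^{\tx}} \EE^{\PP}\big[\, v(\tX_\tau)\inds{\tau<\infty} + G\inds{\tau=\infty}\,\big].
\]

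It then remains to unwind these statements. Because every $\tau\in\sT$ is bounded and, for each fixed initial state, absorption occurs in finite deterministic time, the event $\{\tau=\infty\}$ is $\PP$-null for all $\PP\in\tsP^{\tx}$, so the $G\inds{\tau=\infty}$ term disappears; and under $\PP^{x,z;\QQ}$ one has $\tX_\tau=(T-\tau,S_\tau,\eta_\tau,zZ^{\QQ}_\tau)$ with $Z^{\QQ}_\tau=\eecq{\PP^{T,S,\eta}}{\tRN{\QQ}{\PP^{T,S,\eta}}}{\sF_\tau}$, which produces the displayed dynamic-programming equation. The boundary condition $v(0,S,\eta,z)=V(\eta,S,z)$ follows since for $x\in E'$ the measure $\PP^{x}$ is a Dirac mass, whence $\sM^{x}=\{\PP^{x}\}$ and $\tsP^{x,z}$ is the unit mass on the constant path $(x,z)$, on which $G$ equals $\tV(x,z)=V(S,\eta,z)$. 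Finally, the universally measurable family $\hat\QQ^{T,S,\eta,z}\in\sM^{T,S,\eta}$ of part (2) is read off from $\hne$ through the Borel parametrization $\QQ\mapsto\PP^{x,z;\QQ}$, using the analyticity of $\gm$ from Proposition \ref{pro:analytic-super} together with the Jankov--von Neumann selection already employed in the proof of Theorem \ref{thm:main}. I do not expect any single step here to be a serious obstacle --- the substantive work is carried by Propositions \ref{pro:con-Z}, \ref{pro:disint-Z} and \ref{pro:anal-z} --- but the point requiring genuine care is the identification $\EE^{\PP^{x,z;\QQ}}[G]=\EE^{x}[V(\ldots)]$: one must check both that $L$ extracts exactly the maturity values of $(S,\eta)$ (so that Assumption \ref{ass:fin-hor} is genuinely used) and that $zZ^{\QQ}_t$ converges to $z\tRN{\QQ}{\PP^{x}}$ rather than to a strictly smaller limit, which rests on $Z^{\QQ}$ being a closed (uniformly integrable) martingale --- itself a consequence of the equivalence $\QQ\sim\PP^{x}$.
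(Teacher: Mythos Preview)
Your proposal is correct and follows essentially the same route as the paper: the paper's own proof is a one-sentence appeal to Propositions \ref{pro:con-Z}, \ref{pro:disint-Z} and \ref{pro:anal-z} as the hypotheses needed to invoke Theorem \ref{thm:main}, and you have simply written out the translation layer (the definition of $G$ via Lemma \ref{lem:2DE2} on $\tE$, the identification of $\EE^{\PP^{x,z;\QQ}}[G]$ with the objective in \eqref{equ:util-max-prob}, and the unwinding of the abstract DPP) that the paper leaves implicit. The one place where a small extra remark would be appropriate is the passage between $\FFF$-stopping times on $\de$ (as in the theorem statement) and $\tFFF$-stopping times on $\dep$ (as required by Theorem \ref{thm:main} applied to $(\tsP,\sT)$); this is handled by the $\tFFF^X$-adaptedness of $Z^{\QQ}$ exactly as in the proof of Proposition \ref{pro:con-Z}, and you might make that explicit.
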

  
  \smallskip
  
  We focus on the dual formulation of the utility-maximization problem
  in this paper not only 
  because it fits our framework, but also because it is often much
  easier to work with in practice. For the reader interested in the
  primal problem, here are a few words about its relationship with the
  dual problem and the DPP.  
  
The classical utility-maximization theorem with random endowment in as
described in \cite{CviSchWan01} and adadpted to fit our notation comes
with the function $V$ of the form \[ V(S,\eta,z) = \tU(z) + z
\vp(S,\eta),\] where $\vp$ is a bounded Borel function and
$\tU(z)=\sup_{\xi \geq 0} \Big( U(\xi) - \xi z \Big)$ is the dual of a
utility function $U$, i.e., an increasing and strictly concave
$C^1$-map $U:(0,\infty)\to \R$, with $U'(0+)=\infty$ and
$U'(\infty)=0$. Additionally, the condition of reasonable asymptotic
elasticity $\AsE[U]<1$ is imposed, where
$\AsE[U]=\limsup_{x\to\infty} xU'(x)/U(x)<1$ when $U(x)>0$ for
large enough $x$ and $\AsE [U]=0$, otherwise. Under these
conditions, and a suitable finiteness assumption, the authors of
\cite{CviSchWan01} show 
that the following conjugate relationship holds
	  \begin{equation}
		\label{equ:conj}
		\begin{split}
	  	u(T,S,\eta,\xi) = \inf_{z\in Q_+} \Big( v(T,S,\eta,z) - \xi z \Big),
	   \end{split}
	  \end{equation}
where $u$ is the value function of the \emph{primal
utility-maximization problem} given by \[ u(T,S,\eta,\xi) = \sup_{H\in
\sA^{T,S,\eta}} U\left( \xi+ \int_0^T H_u\, dS_t + \vp(S_T,\eta_T)
\right),\] with $\sA^{T,S,\eta}$ denoting the set of admissible
portfolio processes, and where $T,S,\eta$ and $\xi$ 
range over the problem's effective
domain. The infimum in \eqref{equ:conj} is taken over a countable
set, so the conclusion of universal measurability transfers directly
from $v$ to $u$. The same relationship can be used to show that 
the function $u$ satisfies the appropriate version of the DPP.

\appendix
\section{Some notions from descriptive set theory} For technical
reasons, some basic concepts from descriptive set theory are needed.  The
following few paragraphs provide a (necessarily inadequate) review of the
most prominent notions and facts; we refer the reader to Chapter 7 of
\cite{BerShr78}, or the textbooks \cite{Sri98} and \cite{Kec95}, for a
thorough treatment.

A \define{Polish} space is a separable topological space whose
topology can be induced by a complete metric. The main examples
include all countable discrete spaces, as well as Euclidean spaces
$\R^d$, $d\in\N$, in addition to a host of ``infinite-dimensional''
spaces such as the Hilbert space $[0,1]^{\N}$, the Baire space
$\N^{\N}$, or any separable Banach space, all  with the usual topologies.
Compact metric spaces as well as locally-compact Hausdorff spaces with
a countable base (LCCB) are Polish; so are countable products
(with the product topology) of Polish spaces and all their
$G_{\delta}$ (and, in particular, open or closed) subsets. 

A subset $A$ of a Polish space $X$ is called \define{analytic} if it
can be realized as a projection of a Borel subset of $X\times \R$ onto
$X$.
Borel subsets of Polish spaces are analytic,
but, unless the space is countable, one can always find non-Borel
analytic sets. The family of all analytic sets is closed under
countable unions and intersections, as well as direct and
inverse images of Borel maps. It
is, generally, not closed under complementation. In fact, a complement
of an analytic set, known as a co-analytic set,
is itself analytic if and only if it is Borel measurable.  

The \define{universal $\sigma$-algebra $\sU$} is the intersection of
the $\mu$-completions of the Borel $\sigma$-algebra on $X$, over the
family of all probability measures $\mu$ on it. Its importance for us
stems from the fact that $\sU$-measurable functions can be integrated
with respect to any probability measure, thus allowing us to treat
them as if they were Borel measurable for most practical purposes. 

A \define{Suslin scheme} on a set $X$ equipped with a family $\sG$ of
(not necessarily all of) its subsets, is a map $g: \cup_{n\in\N}
\N^{n} \to \sG$. The Suslin operation assigns to every Suslin
scheme $g$ on $\sG$ the set $A = \cup_{\sigma\in\N^{\N}} \cap_{n}
g(\sigma(1),\dots, \sigma(n))$. Two important facts about Suslin
schemes in our context are the following: 1) $A$ is an analytic subset
of a Polish space $X$ if and only if it is the result of some Suslin
operation on the closed subsets of $X$, and 2) the universal
$\sigma$-algebra is closed under the Suslin operation. It follows that
analytic sets are universally measurable.


\begin{thebibliography}{10}

\bibitem{AliBor99}
{\sc Charalambos~D. Aliprantis and Kim~C. Border}, {\em Infinite-dimensional
  analysis}, Springer-Verlag, Berlin, second~ed., 1999.
\newblock A hitchhiker's guide.

\bibitem{BaySir12a}
{\sc Erhan~Bayraktar and Mihai~S\^{i}rbu}, {\em Stochastic {P}erron's method and
  verification without smoothness using viscosity comparison: obstacle problems
  and {D}ynkin games}, ArXiv e-prints, 1112.4904 (2012).
\newblock to appear in Proceedings of the American Mathematical Society.

\bibitem{BaySir12}
\leavevmode\vrule height 2pt depth -1.6pt width 23pt, {\em Stochastic
  {P}erron's method and verification without smoothness using viscosity
  comparison: the linear case}, Proceedings of the American Mathematical
  Society, 140 (2012), pp.~3645--3654.

\bibitem{BaySir12b}
\leavevmode\vrule height 2pt depth -1.6pt width 23pt, {\em Stochastic
  {P}erron's method for {H}amilton-{J}acobi-{B}ellman equations}, ArXiv
  e-prints, 1212.2170 (2012).

\bibitem{Bel57a}
{\sc Richard Bellman}, {\em Dynamic programming}, Princeton University Press,
  Princeton, N. J., 1957.

\bibitem{BerShr78}
{\sc Dimitri~P. Bertsekas and Steven~E. Shreve}, {\em Stochastic optimal
  control}, vol.~139 of Mathematics in Science and Engineering, Academic Press
  Inc. [Harcourt Brace Jovanovich Publishers], New York, 1978.
\newblock The discrete time case.

\bibitem{BerShr79}
\leavevmode\vrule height 2pt depth -1.6pt width 23pt, {\em Existence of optimal
  stationary policies in deterministic optimal control}, J. Math. Anal. Appl.,
  69 (1979), pp.~607--620.

\bibitem{BerShr96}
\leavevmode\vrule height 2pt depth -1.6pt width 23pt, {\em Stochastic optimal
  control: The discrete time case}, Athena Scientific, Belmont, Massachusetts,
  1996.
\newblock Reprint of the 1978 original.

\bibitem{Bis76}
{\sc Jean-Michel Bismut}, {\em Th\'eorie probabiliste du contr\^ole des
  diffusions}, Mem. Amer. Math. Soc., 4 (1976), pp.~xiii+130.

\bibitem{Bla62}
{\sc David Blackwell}, {\em Discrete dynamic programming}, Ann. Math. Statist.,
  33 (1962), pp.~719--726.

\bibitem{Bla65}
\leavevmode\vrule height 2pt depth -1.6pt width 23pt, {\em Discounted dynamic
  programming}, Ann. Math. Statist., 36 (1965), pp.~226--235.

\bibitem{Bla67}
\leavevmode\vrule height 2pt depth -1.6pt width 23pt, {\em Positive dynamic
  programming}, in Proc. {F}ifth {B}erkeley {S}ympos. {M}ath. {S}tatist. and
  {P}robability ({B}erkeley, {C}alif., 1965/66), {V}ol. {I}: {S}tatistics,
  Univ. California Press, Berkeley, Calif., 1967, pp.~415--418.

\bibitem{BlaFreOrk74}
{\sc David~Blackwell, David~Freedman, and Michael Orkin}, {\em The optimal reward operator
  in dynamic programming}, Ann. Probability, 2 (1974), pp.~926--941.

\bibitem{Bor89}
{\sc Vivek~S. Borkar}, {\em Optimal control of diffusion processes}, vol.~203
  of Pitman Research Notes in Mathematics Series, Longman Scientific \&
  Technical, Harlow, 1989.

\bibitem{BouNut12}
{\sc Bruno Bouchard and Marcel Nutz}, {\em Weak dynamic programming for
  generalized state constraints}, SIAM J. Control Optim., 50 (2012),
  pp.~3344--3373.

\bibitem{BouNut13}
{\sc Bruno {Bouchard} and Marcel {Nutz}}, {\em {Arbitrage and Duality in Nondominated
  Discrete-Time Models}}, ArXiv e-prints,  (2013).
\newblock 1305.6008.

\bibitem{BouTou11}
{\sc Bruno Bouchard and Nizar Touzi}, {\em Weak dynamic programming principle
  for viscosity solutions}, SIAM J. Control Optim., 49 (2011), pp.~948--962.

\bibitem{BouVu10}
{\sc Bruno Bouchard and Thanh~Nam Vu}, {\em The obstacle version of the
  geometric dynamic programming principle: application to the pricing of
  {A}merican options under constraints}, Appl. Math. Optim., 61 (2010),
  pp.~235--265.

\bibitem{CviSchWan01}
{\sc Jak\v sa Cvitani{\'c}, Walter Schachermayer, and Hui Wang}, {\em Utility maximization
  in incomplete markets with random endowment}, Finance and Stochastics, 5
  (2001), pp.~237--259.

\bibitem{DelSch98}
{\sc Freddie Delbaen and Walter Schachermayer}, {\em The fundamental theorem of asset
  pricing for unbounded stochastic processes}, Math. Ann., 312 (1998),
  pp.~215--250.

\bibitem{DelMey82}
{\sc Claude Dellacherie and Paul-Andr{\'e} Meyer}, {\em Probabilities and
  potential. {B}}, vol.~72 of North-Holland Mathematics Studies, North-Holland
  Publishing Co., Amsterdam, 1982.
\newblock Theory of martingales, Translated from the French by J. P. Wilson.

\bibitem{Elk81}
{\sc Nicole~El~Karoui}, {\em Les aspects probabilistes du contr\^ole stochastique},
  in Ninth {S}aint {F}lour {P}robability {S}ummer {S}chool---1979 ({S}aint
  {F}lour, 1979), vol.~876 of Lecture Notes in Math., Springer, Berlin, 1981,
  pp.~73--238.

\bibitem{ElKTan13}
{\sc Nicole~{El Karoui} and Xiaolu {Tan}}, {\em {Capacities, Measurable Selection and
  Dynamic Programming Part I: Abstract Framework}}, ArXiv e-prints,  (2013).
\newblock 1310.3363.

\bibitem{ElKTan13a}
\leavevmode\vrule height 2pt depth -1.6pt width 23pt, {\em {Capacities,
  Measurable Selection and Dynamic Programming Part II: Application in
  Stochastic Control Problems}}, ArXiv e-prints,  (2013).
\newblock 1310.3364.

\bibitem{EthKur86}
{\sc Stewart~N. Ethier and Thomas~G. Kurtz}, {\em Markov processes}, Wiley
  Series in Probability and Mathematical Statistics: Probability and
  Mathematical Statistics, John Wiley \& Sons Inc., New York, 1986.
\newblock Characterization and convergence.

\bibitem{FleSon93}
{\sc Wendell~H. Fleming and H.~Mete Soner}, {\em Controlled {M}arkov processes
  and viscosity solutions}, vol.~25 of Applications of Mathematics (New York),
  Springer-Verlag, New York, 1993.

\bibitem{FleSon06}
\leavevmode\vrule height 2pt depth -1.6pt width 23pt, {\em Controlled {M}arkov
  processes and viscosity solutions}, vol.~25 of Stochastic Modelling and
  Applied Probability, Springer, New York, second~ed., 2006.

\bibitem{FolPro11}
{\sc Hans F{\"o}llmer and Philip Protter}, {\em Local martingales and
  filtration shrinkage}, ESAIM Probab. Stat., 15 (2011), pp.~S25--S38.

\bibitem{Kal02b}
{\sc Olav Kallenberg}, {\em Foundations of modern probability}, Probability and
  its Applications (New York), Springer-Verlag, New York, second~ed., 2002.

\bibitem{Kec95}
{\sc Alexander~S. Kechris}, {\em Classical descriptive set theory}, vol.~156 of
  Graduate Texts in Mathematics, Springer-Verlag, New York, 1995.

\bibitem{MaiSud96}
{\sc Ashok~P. Maitra and William~D. Sudderth}, {\em Discrete gambling and
  stochastic games}, vol.~32 of Applications of Mathematics (New York),
  Springer-Verlag, New York, 1996.

\bibitem{Nut12}
{\sc Marcel Nutz}, {\em Random {G}-expectations}.
\newblock to appear in Annals of Applied Probability, 2012.

\bibitem{NutHan13}
{\sc Marcel Nutz and Ramon van Handel}, {\em Constructing sublinear
  expectations on path space}, Stochastic Process. Appl., 123 (2013),
  pp.~3100--3121.

\bibitem{Pro04}
{\sc Philip~E. Protter}, {\em Stochastic integration and differential
  equations}, vol.~21 of Applications of Mathematics (New York),
  Springer-Verlag, Berlin, second~ed., 2004.
\newblock Stochastic Modelling and Applied Probability.

\bibitem{RevYor99}
{\sc Daniel Revuz and Marc Yor}, {\em Continuous martingales and {B}rownian
  motion}, vol.~293 of Grundlehren der Mathematischen Wissenschaften
  [Fundamental Principles of Mathematical Sciences], Springer-Verlag, Berlin,
  third~ed., 1999.

\bibitem{ShrBer79}
{\sc Steven~E. Shreve and Dimitri~P. Bertsekas}, {\em Universally measurable
  policies in dynamic programming}, Math. Oper. Res., 4 (1979), pp.~15--30.

\bibitem{SonTou02}
{\sc H.~Mete Soner and Nizar Touzi}, {\em Dynamic programming for stochastic
  target problems and geometric flows}, J. Eur. Math. Soc. (JEMS), 4 (2002),
  pp.~201--236.

\bibitem{SonTou02a}
\leavevmode\vrule height 2pt depth -1.6pt width 23pt, {\em Stochastic target
  problems, dynamic programming, and viscosity solutions}, SIAM J. Control
  Optim., 41 (2002), pp.~404--424.

\bibitem{Sri98}
{\sc Shashi M.~Srivastava}, {\em A course on Borel sets}, vol.~180, Springer, New
  York, 1998.

\bibitem{Str66}
{\sc Ralph~E. Strauch}, {\em Negative dynamic programming}, Ann. Math.
  Statist., 37 (1966), pp.~871--890.

\bibitem{Str77}
{\sc Christophe Stricker}, {\em {Quasimartingales, martingales locales,
  semimartingales et filtration naturelle}}, Probability Theory and Related
  Fields, 39 (1977), pp.~55--63.

\bibitem{StrVar06}
{\sc Daniel~W. Stroock and S.~R.~Srinivasa Varadhan}, {\em Multidimensional
  diffusion processes}, Classics in Mathematics, Springer-Verlag, Berlin, 2006.
\newblock Reprint of the 1997 edition.

\bibitem{Var63}
{\sc Veeravali S. Varadarajan}, {\em Groups of automorphisms of {B}orel spaces},
  Trans. Amer. Math. Soc., 109 (1963), pp.~191--220.

\bibitem{Wal50}
{\sc Abraham Wald}, {\em Statistical {D}ecision {F}unctions}, John Wiley \&
  Sons Inc., New York, N. Y., 1950.

\bibitem{YonZho99}
{\sc Jiongmin Yong and Xun~Yu Zhou}, {\em Stochastic controls}, vol.~43 of
  Applications of Mathematics, Springer-Verlag, New York, 1999.

\end{thebibliography}

\def\ocirc#1{\ifmmode\setbox0=\hbox{$#1$}\dimen0=\ht0 \advance\dimen0
  by1pt\rlap{\hbox to\wd0{\hss\raise\dimen0
  \hbox{\hskip.2em$\scriptscriptstyle\circ$}\hss}}#1\else {\accent"17 #1}\fi}
  \ifx \cprime \undefined \def \cprime {$\mathsurround=0pt '$}\fi\ifx \k
  \undefined \let \k = \c \fi\ifx \scr \undefined \let \scr = \cal \fi\ifx
  \soft tundefined \def \soft
  {\relax}\fi\def\ocirc#1{\ifmmode\setbox0=\hbox{$#1$}\dimen0=\ht0
  \advance\dimen0 by1pt\rlap{\hbox to\wd0{\hss\raise\dimen0
  \hbox{\hskip.2em$\scriptscriptstyle\circ$}\hss}}#1\else {\accent"17 #1}\fi}
  \ifx \cprime \undefined \def \cprime {$\mathsurround=0pt '$}\fi\ifx \k
  \undefined \let \k = \c \fi\ifx \scr \undefined \let \scr = \cal \fi\ifx
  \soft tundefined \def \soft {\relax}\fi

\end{document}